\begin{document}
\title{Coleman automorphisms of finite groups and their minimal normal subgroups}
\author{Arne Van Antwerpen}
\date{}
\maketitle
\begin{abstract}In this paper, we show that all Coleman automorphisms of a finite group with self-central minimal non-trivial characteristic subgroup are inner; therefore the normalizer property holds for these groups. Using our methods we show that the holomorph and wreath product of finite simple groups, among others, have no non-inner Coleman automorphisms. As a further application of our theorems, we provide partial answers to questions raised by M. Hertweck and W. Kimmerle. Furthermore, we characterize the Coleman automorphisms of extensions of a finite nilpotent group by a cyclic $p$-group. Lastly, we note that class-preserving automorphisms of $2$-power order of some nilpotent-by-nilpotent groups are inner, extending a result by J. Hai and J. Ge. \end{abstract}

\section*{Introduction}
Let $G$ be a finite group and let $\mathbb{Z}G$ denote the integral group ring of $G$. Denote by $Z(U(\mathbb{Z}G))$ the center of the unit group $U(\mathbb{Z}G)$ of $\mathbb{Z}G$ and by $N_{U( \mathbb{Z}G)}(G)$ the normalizer of $G$ in $U(\mathbb{Z}G)$. The well known normalizer problem asks whether $N_{U(\mathbb{Z}G)}(G) = G Z(U(\mathbb{Z}G))$. This problem is posed as question $43$ in S.K. Sehgal \cite{Sehgalbook}. The first results for $p$-groups, nilpotent groups and groups with normal Sylow $2$-subgroups are due (respectively) to D. Coleman \cite{coleman1963}, A. Saksonov \cite{saksonov}, S. Jackowski and Z. Marciniak \cite{Jackowski1987}. Further work by, among others, S. Sehgal, M. Parmenter, Y. Li \cite{Blackburn}, E. Jespers and M. Hertweck \cite{blackburnJespers}, M. Hertweck \cite{Hertwecklocalanalysis,Hertweckclasspreservingcoleman,classpreservingHertweck} and M. Hertweck and W. Kimmerle \cite{HertweckKimmerle} enlarged the class of groups with a positive answer greatly. M. Mazur \cite{Mazur1995,Mazurinfinitegroups} showed that the question is closely related to the Isomorphism problem. It poses the question whether an isomorphism between two integral group rings $\mathbb{Z}G \cong \mathbb{Z}H$ implies that the groups are isomorphic $G \cong H$. In 2001 M. Hertweck \cite{Counterexample} discovered a counterexample to the isomorphism problem by first constructing a counterexample to the normalizer problem. All counterexamples to the normalizer problem, known to date, are obtained using M. Hertweck's construction \cite{Counterexample}. Hence, it opens the question whether there are still large classes of groups which have a positive answer to the normalizer problem (we say they satisfy the normalizer property). S. Jackowski and Z. Marciniak \cite{Jackowski1987} realized, by Coleman's result, that the normalizer property is closely related to automorphisms of the underlying finite group $G$.  Later, M. Hertweck and W. Kimmerle \cite{HertweckKimmerle} initiated the study of Coleman automorphisms, which are a generalization of the automorphisms obtained in the study of the normalizer property \cite{Jackowski1987}. Using the alternative characterization of the normalizer problem from \cite{Jackowski1987}, it is obvious that finite groups with no non-inner Coleman automorphisms have the normalizer property.

In the first section we give the necessary background on the automorphisms in play.

In the second section we show that the Coleman automorphisms of groups with a self-centralizing minimal non-trivial characteristic subgroup are trivial. Using the same techniques, we show that the holomorph of finite simple groups has no non-inner Coleman automorphisms. Also, we partially answer questions $2$ and $3$ posed by M. Hertweck and W. Kimmerle in \cite{HertweckKimmerle}. Moreover, we show that if question $2$ is answered positively, then so is question $3$. Furthermore, the theorems proven in this section show that certain semi-direct products with simple groups, $p$-groups or symmetric groups as normal base group have no non-inner Coleman automorphisms. In particular, the recent results of J. Hai and J. Guo in \cite{haipermutational} are easy consequences of our theorems.

In the third section, Coleman automorphisms of extensions of nilpotent groups by cyclic $p$-groups are characterized. As a nice consequence, this provides an easier and different proof of E.C. Dade's result (in \cite{DadeLocallytrivial}) that any finite abelian group can be realized as the group of outer Coleman automorphisms of some metabelian group $G$. Furthermore, the recent results of Y. Li and Z. Li in \cite{generalizeddihedral} are straightforward applications of our characterization. It also shows that the theorems of the previous sections can not be naively adapted to the case of a semi-direct product with a nilpotent group as a normal base group.

In the last section, we note that a recent result of J. Hai and S. Ge ( in \cite{nilpotentbycyclic}) can easily be generalized. It is shown that the class-preserving Coleman automorphisms of $p$-power order of certain nilpotent-by-cyclic groups are inner. As a Corollary we then prove that the class-preserving Coleman automorphisms of $p$-power order of certain nilpotent-by-nilpotent groups are inner.
\section{Preliminary results}
In this section we give the definition of the investigated automorphisms. We also include crucial known results on these automorphisms. By $\textup{Aut}(G)$ (respectively $\textup{Inn}(G)$) we denote the automorphisms (respectively inner automorphisms) of a group $G$. By $\textup{conj}(g)$ we denote the inner conjugation $x \mapsto g^{-1} xg$ on $G$. The group of outer automorphisms is $\textup{Out}(G) = \textup{Aut}(G)/\textup{Inn}(G)$. The set consisting of the prime divisors of the order of a finite group $G$ is denoted by $\pi(G)$. The cyclic group of order $n$ will be denoted $C_n$ for a positive integer $n$. The commutator of two elements $x,y \in G$ is denoted $[x,y]$.

We begin with the definition of Coleman automorphism, which was introduced by M. Hertweck and W. Kimmerle in \cite{HertweckKimmerle}.
\newtheorem{Definition}[subsection]{Definition} \begin{Definition}\label{defcoleman} Let $G$ be a finite group and $\sigma \in \textup{Aut}(G)$. If for any prime $p$ dividing the order of $G$ and any Sylow $p$-subgroup $P$ of $G$, there exists a $g \in G$ such that $\sigma|_P = \textup{conj}(g)|_P$, then $\sigma$ is said to be a Coleman automorphism.
\end{Definition}
\vspace{0.1cm}
The group of Coleman automorphisms is denoted $\textup{Aut}_{col}(G)$ and its image in $\textup{Out}(G)$ is denoted $\textup{Out}_{col}(G)= \textup{Aut}_{col}(G)/\textup{Inn}(G)$.
\begin{Definition}\label{defclasspres} Let $G$ be a finite group and $\sigma \in \textup{Aut}(G)$. If the conjugacy classes of $G$ are invariant under $\sigma$, i.e. for any $g \in G$ there exists an $x \in G$ such that $\sigma(g) = x^{-1}gx$, then $\sigma$ is called a class-preserving automorphism.
\end{Definition}
\vspace{0.1cm}
The group of class-preserving automorphisms is denoted $\textup{Aut}_{c}(G)$ and its image in $\textup{Out}(G)$ is denoted $\textup{Out}_{c}(G)= \textup{Aut}_{c}(G)/\textup{Inn}(G)$. The following proposition shows that the prime divisors of the orders of the groups of class-preserving automorphisms and Coleman automorphisms are restricted to the prime divisors of $|G|$. This is well known for the class-preserving automorphisms, a proof can be found in \cite[Proposition 2.4]{Jackowski1987}. A proof for the Coleman automorphisms may be found in \cite[Proposition 1]{HertweckKimmerle}.
\newtheorem{Theorem}[subsection]{Theorem} \begin{Theorem}\label{orderclasspreserving} Let $G$ be a finite group. The prime divisors of $|\textup{Aut}_{col}(G)|$ and $|\textup{Aut}_{c}(G)|$ are also prime divisors of $|G|$.
\end{Theorem}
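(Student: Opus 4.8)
The plan is to reduce at once to an automorphism $\sigma$ of prime order $q$ and to assume, for contradiction, that $q \nmid |G|$; since $|\textup{Inn}(G)|$ also divides $|G|$, it is then enough to prove that such a $\sigma$ must be the identity. For both statements I would work inside the split extension $H = G \rtimes \langle \sigma \rangle$, writing $s$ for a generator of a complement with $sxs^{-1} = \sigma(x)$ for $x \in G$. The coprimality $\gcd(q,|G|) = 1$ makes Schur--Zassenhaus available for $G \trianglelefteq H$: the complements of $G$ in $H$ are exactly the subgroups of order $q$, and all of them are conjugate to $\langle s \rangle$ by an element of $G$ (one first conjugates into $\langle s \rangle$ by an element of $H$, then absorbs the $\langle s \rangle$-part using $s^q = 1$).

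For the class-preserving case, fix $a \in G$. Since $\sigma(a)$ is $G$-conjugate to $a$, an immediate computation produces $u \in Gs$ with $uau^{-1} = a$. Let $u_q$ be the $q$-part of $u$, an element of order $q$; then $C_G(u) \subseteq C_G(u_q)$, and $\langle u_q \rangle$ is a complement of $G$ in $H$, hence a $G$-conjugate of $\langle s \rangle$ by the previous paragraph. Consequently $C_G(u_q)$ is a $G$-conjugate of $C_G(\sigma) = \{x \in G : \sigma(x)=x\}$, so $a$ lies in some $G$-conjugate of $C_G(\sigma)$. As $a$ was arbitrary, $G$ is the union of the $G$-conjugates of the single subgroup $C_G(\sigma)$; since a finite group is never the union of all conjugates of a proper subgroup, $C_G(\sigma) = G$, i.e.\ $\sigma$ is the identity --- contradiction. (That $\sigma$ has prime order is harmless here, as class-preserving automorphisms are closed under composition.)

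For the Coleman case the opening move is replaced by the following: given a prime $p \mid |G|$, choose a Sylow $p$-subgroup $P$ and $c \in G$ with $\sigma|_P = \textup{conj}(c)|_P$; then $y := cs \in Gs$ centralizes $P$ elementwise. Taking the $q$-part $y_q$ of $y$ and applying Schur--Zassenhaus, $\langle y_q \rangle = g^{-1}\langle s \rangle g$ for some $g \in G$; since $P$ centralizes $y_q$, the Sylow $p$-subgroup $gPg^{-1}$ is centralized by $s$, that is, fixed pointwise by $\sigma$. Letting $p$ run over all prime divisors of $|G|$ yields one Sylow subgroup for each prime, each fixed pointwise by $\sigma$, and these generate $G$; hence $\sigma$ is the identity --- again a contradiction.

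The prime-order reduction and the elementary estimate $\bigl|\bigcup_{g \in G} g H g^{-1}\bigr| < |G|$ for a proper subgroup $H$ are routine. The step I expect to carry the weight --- and the one showing why the hypothesis $q \nmid |G|$ is indispensable --- is the appeal to Schur--Zassenhaus, which turns the a priori element-by-element data (the subgroup $C_G(u)$ depending on $a$, or the relation $\sigma|_P = \textup{conj}(c)$ depending on $P$) into uniform information about the fixed-point subgroup $C_G(\sigma)$, respectively about $\sigma$-invariant Sylow subgroups. The one point that deserves care is the verification that the conjugating elements furnished by Schur--Zassenhaus can be chosen inside $G$ rather than merely in $H$.
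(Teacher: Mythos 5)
Your proof is correct and is essentially the standard argument: the paper gives no proof of its own but cites \cite[Proposition 2.4]{Jackowski1987} and \cite[Proposition 1]{HertweckKimmerle}, both of which work in the split extension $H = G \rtimes \langle\sigma\rangle$ and use conjugacy of complements exactly as you do (centralizing element $u=gs$ resp.\ $y=cs$, passage to its $q$-part, covering $G$ by conjugates of the fixed-point subgroup resp.\ producing a pointwise-fixed Sylow subgroup for each prime). A minor simplification: since $q\nmid|G|$, the complements of $G$ in $H$ are precisely the subgroups of order $q$, i.e.\ the Sylow $q$-subgroups of $H$, so Sylow's theorem already yields their conjugacy (by an element of $G$, after absorbing the $\langle s\rangle$-part as you note) and the appeal to Schur--Zassenhaus, whose conjugacy part would anyway apply because $H/G\cong C_q$ is solvable, is not strictly needed.
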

The following well-known theorem shows that Coleman automorphisms behave well with respect to direct products.
\newtheorem{Proposition}[subsection]{Proposition} \begin{Proposition}\label{directprodcoleman} Let $G$ and $H$ be finite groups. Then $\textup{Aut}_{col}(G \times H) \cong \textup{Aut}_{col}(G) \times \textup{Aut}_{col}(H)$ and $\textup{Out}_{col}(G \times H) \cong \textup{Out}_{col}(G) \times \textup{Out}_{col}(H)$.
\end{Proposition}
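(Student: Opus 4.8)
The plan is to construct an explicit isomorphism $\textup{Aut}_{col}(G)\times\textup{Aut}_{col}(H)\to\textup{Aut}_{col}(G\times H)$, $(\alpha,\beta)\mapsto\alpha\times\beta$, and then obtain the second isomorphism by quotienting out inner automorphisms. Throughout I identify $G$ with $G\times 1$ and $H$ with $1\times H$. For the easy direction, note that every Sylow $p$-subgroup of $G\times H$ is of the form $P\times Q$ with $P$ a Sylow $p$-subgroup of $G$ and $Q$ a Sylow $p$-subgroup of $H$ (it is a $p$-group of the correct order, and all Sylow $p$-subgroups are conjugate). So if $\alpha|_P=\textup{conj}(g)|_P$ and $\beta|_Q=\textup{conj}(h)|_Q$, then $(\alpha\times\beta)|_{P\times Q}=\textup{conj}((g,h))|_{P\times Q}$; hence $\alpha\times\beta$ is Coleman, and $(\alpha,\beta)\mapsto\alpha\times\beta$ is an injective homomorphism.

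The crux is surjectivity. Let $\sigma\in\textup{Aut}_{col}(G\times H)$ and write $\sigma(g,1)=(a(g),b(g))$, $\sigma(1,h)=(c(h),d(h))$, so that $a,b$ and $c,d$ are the coordinate components of the homomorphisms $\sigma|_{G\times 1}$ and $\sigma|_{1\times H}$. I would first show that $b$ and $c$ are trivial. If $y\in G$ has prime-power order $p^k$, then $(y,1)$ lies in some Sylow $p$-subgroup $S$ of $G\times H$, and the Coleman condition provides $(u,v)\in G\times H$ with $\sigma|_S=\textup{conj}((u,v))|_S$; since conjugation in a direct product acts coordinatewise, $\sigma(y,1)=(u^{-1}yu,1)$, so $b(y)=1$ and $a(y)=u^{-1}yu$. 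As $b$ is a homomorphism and every element of $G$ is a product of commuting prime-power-order elements (its primary components), $b\equiv 1$; symmetrically $c\equiv 1$. Hence $\sigma(g,h)=(a(g),d(h))$, i.e. $\sigma=a\times d$ with $a\in\textup{Aut}(G)$ and $d\in\textup{Aut}(H)$ (both are bijective, as $\sigma$ now visibly preserves $G\times 1$ and $1\times H$). Moreover, applying the previous computation with $S=P\times Q$ for an arbitrary Sylow $p$-subgroup $P$ of $G$ shows $a|_P=\textup{conj}(u)|_P$, so $a\in\textup{Aut}_{col}(G)$; likewise $d\in\textup{Aut}_{col}(H)$. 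This establishes the first isomorphism.

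For the second, I would observe that the canonical isomorphism $\textup{Inn}(G\times H)\cong\textup{Inn}(G)\times\textup{Inn}(H)$, which comes from $Z(G\times H)=Z(G)\times Z(H)$, is compatible with the map constructed above, since $\textup{conj}((g,h))$ corresponds to $(\textup{conj}(g),\textup{conj}(h))$; quotienting therefore yields $\textup{Out}_{col}(G\times H)\cong\textup{Out}_{col}(G)\times\textup{Out}_{col}(H)$. I do not anticipate a genuine obstacle here; the only subtle point is the assertion that a Coleman automorphism maps $G\times 1$ into itself, and that is precisely where one uses both that inner automorphisms of a direct product act coordinatewise and that $G$ is covered by its elements of prime-power order.
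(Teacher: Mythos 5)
Your proof is correct. Note that the paper itself states this proposition as ``well-known'' and offers no proof, so there is no argument to compare against; your construction of the isomorphism $(\alpha,\beta)\mapsto\alpha\times\beta$, the reduction of surjectivity to showing that a Coleman automorphism preserves the factors $G\times 1$ and $1\times H$ (via the coordinatewise action of conjugation on Sylow subgroups $P\times Q$ and the primary decomposition of group elements), and the passage to $\textup{Out}_{col}$ by matching $\textup{Inn}(G\times H)$ with $\textup{Inn}(G)\times\textup{Inn}(H)$ constitute the standard and complete argument.
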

The following lemma shows that Coleman automorphisms are ''close'' to being class-preserving (this result appeared implicitly in \cite{HertweckKimmerle}).
\newtheorem{Lemma}[subsection]{Lemma}\begin{Lemma}\label{colemannormalinvariant} Let $N$ be a normal subgroup of a finite group $G$. If $\sigma \in \textup{Aut}_{col}(G)$, then $\sigma|_{N} \in \textup{Aut}(N)$. \end{Lemma}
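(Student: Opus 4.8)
The plan is to prove the apparently stronger statement that $\sigma(N) = N$; once we know $\sigma(N) \subseteq N$, finiteness of $N$ forces $\sigma|_N$ to be a bijection, and hence an automorphism of $N$.

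First I would reduce to elements of prime power order. Every $x \in N$ decomposes as $x = \prod_{p} x_p$, the product of its pairwise commuting $p$-parts $x_p$; each $x_p$ is a power of $x$, so it lies in $N$, and $\sigma(x) = \prod_p \sigma(x_p)$ since $\sigma$ is a homomorphism. Thus it suffices to show $\sigma(y) \in N$ for an arbitrary $p$-element $y \in N$ and an arbitrary prime $p$.

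Now fix such a $y$ and $p$. Then $\langle y \rangle$ is a $p$-subgroup of $G$, hence contained in some Sylow $p$-subgroup $P$ of $G$. Because $\sigma \in \textup{Aut}_{col}(G)$, there exists $g \in G$ with $\sigma|_P = \textup{conj}(g)|_P$; in particular $\sigma(y) = g^{-1} y g$. Since $N \trianglelefteq G$ and $y \in N$, this gives $\sigma(y) = g^{-1} y g \in N$. This single step, where normality of $N$ is used, is the real content of the lemma; everything else is bookkeeping.

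Putting the pieces together yields $\sigma(N) \subseteq N$. As $\sigma$ is injective on $G$ it is injective on $N$, and since $N$ is finite, $\sigma|_N$ is a bijection of $N$, i.e. $\sigma|_N \in \textup{Aut}(N)$. (Alternatively, one first checks that $\textup{Aut}_{col}(G)$ is a subgroup of $\textup{Aut}(G)$: the composite and the inverse of Coleman automorphisms are again Coleman, using that automorphisms permute the Sylow $p$-subgroups and that a conjugation restricted to a Sylow subgroup is again of the required form. Then applying $\sigma(N) \subseteq N$ also to $\sigma^{-1}$ gives $\sigma(N) = N$ directly.) I do not expect a genuine obstacle here; the only mild subtlety is recognizing that a prime-power-order element of $N$ must be swallowed by a Sylow subgroup of the ambient group $G$, after which normality closes the argument.
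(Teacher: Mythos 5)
Your proposal is correct and follows essentially the same route as the paper: decompose an element of $N$ into its commuting prime-power parts, use the Coleman property on a Sylow subgroup containing each part to see that its image is a conjugate and hence stays in $N$, and conclude by finiteness. The paper's own proof is just a terser version of this same argument (it leaves the final bijectivity step implicit), so there is nothing to add.
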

\begin{proof}
Let $x \in N$ and $\sigma\in \textup{Aut}_{col}(G)$. Let $p_1, ... , p_k$ be the distinct prime divisors of the order of $x$ and write $x = x_{p_1} ... x_{p_k}$, where each $x_{p_i}$ is the $p_i$-part of $x$. As $\sigma \in \textup{Aut}_{col}(G)$, there exist $g_i \in G$ such that $\sigma(x_{p_i}) = g^{-1}_i x_{p_i} g_i$. Thus, $\sigma(x) = g_1^{-1} x_{p_1} g_1 ... g_k^{-1} x_{p_k} g_k$ and therefore $\sigma(x) \in N$.
\end{proof}
A proof of the following lemma can be found in \cite[Lemma 2]{classpreservingHertweck}.
\begin{Lemma}\label{lemmaopzn} Let $p$ be a prime number and $\alpha \in \textup{Aut}(G)$ of $p$-power order. Assume that there exists a normal subgroup $N$ of $G$, such that $\alpha|_N = \textup{id}_N$ and $\alpha$ induces identity on $G/N$. Then $\alpha$ induces identity on $G/O_p(Z(N))$. Moreover, if $\alpha$ also fixes a Sylow $p$-subgroup of $G$ elementwise, then $\alpha$ is conjugation by an element of $O_p(Z(N))$.
\end{Lemma}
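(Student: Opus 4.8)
The plan is to encode $\alpha$ by the displacement map $\delta\colon G\to G$, $\delta(g)=g^{-1}\alpha(g)$. Since $\alpha$ acts as the identity on $G/N$ we have $\delta(g)\in N$ for all $g$, and since $\alpha|_N=\mathrm{id}_N$ we have $\delta|_N\equiv 1$. Writing out that $\alpha$ is a homomorphism gives the crossed-homomorphism identity $\delta(gh)=h^{-1}\delta(g)h\,\delta(h)$. The whole statement then reduces to pinning down the values of $\delta$.

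First I would show, with no use of the order of $\alpha$, that $\delta(g)\in Z(N)$ for every $g$: evaluating $\alpha$ on $g^{-1}ng$ for $n\in N$ in two ways — using on one hand that $g^{-1}ng\in N$ is fixed by $\alpha$, on the other that $\alpha(g^{-1}ng)=\alpha(g)^{-1}n\,\alpha(g)=\delta(g)^{-1}g^{-1}ng\,\delta(g)$ — forces $\delta(g)$ to centralize $N$, so $\delta(g)\in Z(N)$. Next, since $\delta(g)\in N$ is fixed by $\alpha$, an immediate induction gives $\alpha^{m}(g)=g\,\delta(g)^{m}$ for all $m\ge 1$; taking $m=|\alpha|$, a power of $p$, yields $\delta(g)^{|\alpha|}=1$. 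Hence $\delta(g)$ is a $p$-element of the abelian group $Z(N)$, that is, $\delta(g)\in O_p(Z(N))$. Since $O_p(Z(N))$ is characteristic in $N$ and therefore normal in $G$, this says precisely that $\alpha$ induces the identity on $G/O_p(Z(N))$, which is the first assertion.

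For the second assertion, set $M=O_p(Z(N))$; it is a normal $p$-subgroup of $G$, abelian, and centralized by $N$, so $G/N$ acts on $M$ by conjugation. By the previous step $\delta$ is a $1$-cocycle with values in $M$ that vanishes on $N$, hence descends to $\bar\delta\in Z^{1}(G/N,M)$. If $\alpha$ fixes the Sylow $p$-subgroup $P$ of $G$ pointwise then $\delta|_P\equiv 1$, so $\bar\delta$ vanishes on $\bar P=PN/N$, a Sylow $p$-subgroup of $G/N$. Because $M$ is a finite $p$-group, $H^{1}(G/N,M)$ is a finite $p$-group and restriction to $\bar P$ is injective on it (the composite of restriction and corestriction is multiplication by the index $[G/N:\bar P]$, which is prime to $p$); as $\bar\delta$ restricts to $0$ on $\bar P$, we conclude $[\bar\delta]=0$. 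Thus there is $c\in M$ with $\delta(g)=g^{-1}c^{-1}g\,c$ for all $g$, and then $\alpha(g)=g\,\delta(g)=c^{-1}gc$, i.e. $\alpha=\operatorname{conj}(c)$ with $c\in O_p(Z(N))$.

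The main obstacle is this last step: one must either appeal to the cohomological fact that restriction to a Sylow subgroup is injective on cohomology with $p$-group coefficients, or, to stay wholly elementary, induct on $|M|$ along a $G/N$-chief series of $M$, at each stage solving an abelian $p$-group coboundary equation (where the hypothesis that $\alpha$ fixes $P$ pointwise guarantees solvability) and lifting the solution. Everything else is bookkeeping with the single cocycle $\delta$.
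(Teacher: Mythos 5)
The paper does not actually prove this lemma; it only cites \cite[Lemma 2]{classpreservingHertweck} for it. Your argument is correct and is essentially the standard one used there: every step checks out --- the displacement map $\delta(g)=g^{-1}\alpha(g)$ lands in $Z(N)$ by the conjugation computation, the identity $\alpha^{m}(g)=g\,\delta(g)^{m}$ forces $\delta(g)\in O_p(Z(N))$, $\delta$ descends to a $1$-cocycle of $G/N$ with values in the abelian normal $p$-subgroup $O_p(Z(N))$, and the vanishing of its class follows from injectivity of restriction to a Sylow $p$-subgroup on $H^{1}$ with $p$-group coefficients, yielding $\alpha=\textup{conj}(c)$ with $c\in O_p(Z(N))$.
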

\vspace{0.1cm}
The following result was proven by M. Hertweck and W. Kimmerle in \cite{HertweckKimmerle}.
\begin{Theorem}\label{p'groupserfelijk} Let $N$ be a normal subgroup of a finite group $G$ and $p$ a prime number which does not divide the order of $G/N$. Then the following properties hold.
\begin{enumerate}
\item If $\sigma \in \textup{Aut}(G)$ is a class-preserving or a Coleman automorphism of $G$ of $p$-power order, then $\sigma$ induces a class-preserving or a Coleman automorphism of $N$ respectively.
\item If $\textup{Out}_c(N)$ or $\textup{Out}_{col}(N)$ is a $p'$-group, then so is $\textup{Out}_c(G)$ or $\textup{Out}_{col}(G)$ respectively.
\end{enumerate}
\end{Theorem}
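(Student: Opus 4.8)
The plan is to prove (1) in full and deduce (2) from it. For (1), in the Coleman case: by Lemma~\ref{colemannormalinvariant}, $\sigma$ restricts to $\tau:=\sigma|_N\in\textup{Aut}(N)$, of $p$-power order. Given a prime $q\mid|N|$ and a Sylow $q$-subgroup $Q$ of $N$, I want $n\in N$ with $\tau|_Q=\textup{conj}(n)|_Q$. Pick a Sylow $q$-subgroup $R\geq Q$ of $G$ and, using that $\sigma$ is Coleman, $g\in G$ with $\sigma|_R=\textup{conj}(g)|_R$; then $\tau|_Q=\textup{conj}(g)|_Q$ and $\tau(Q)=Q^g\leq N$ is a Sylow $q$-subgroup of $N$, so Sylow's theorem inside $N$ gives $m\in N$ with $Q^g=Q^m$. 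Writing $c:=gm^{-1}\in N_G(Q)$ and $g=cm$, we get $\tau|_Q=\textup{conj}(m)|_Q\circ\textup{conj}(c)|_Q$ with $\textup{conj}(c)|_Q\in\textup{Aut}(Q)$; so it is enough to show that $\textup{conj}(c)|_Q$ is conjugation by an element of $N_N(Q)$, which then gives $\tau|_Q=\textup{conj}(n_0m)|_Q$ with $n_0m\in N$.

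For the key step, let $A$ and $A_0$ be the images of $N_G(Q)$ and of $N_N(Q)=N_G(Q)\cap N$ in $\textup{Aut}(Q)$; since $N\trianglelefteq G$, $A_0\trianglelefteq A$, and since $[N_G(Q):N_N(Q)]$ divides $[G:N]$ it is prime to $p$, so $N_N(Q)$ contains a Sylow $p$-subgroup of $N_G(Q)$. Decompose $c=c_pc_{p'}$ into its commuting $p$- and $p'$-parts in $N_G(Q)$: $c_p$ lies in a Sylow $p$-subgroup of $N_G(Q)$, hence is $N_G(Q)$-conjugate into $N_N(Q)$, and $A_0\trianglelefteq A$ then forces $\textup{conj}(c_p)|_Q\in A_0$. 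For $c_{p'}$ I bring in the $p$-power order of $\sigma$: with $\rho:=\textup{conj}(m^{-1})\tau$ one checks $\rho(Q)=Q$, $\rho|_Q=\textup{conj}(c)|_Q$, and $\rho^{|\sigma|}=\textup{conj}(w)|_N$ with $w\in N_N(Q)$ (as $\rho$ stabilizes $Q$); hence $\textup{conj}(c^{|\sigma|})|_Q\in A_0$, so $\textup{conj}(c_{p'}^{\,|\sigma|})|_Q\in A_0$, and since $c_{p'}^{\,|\sigma|}$ generates $\langle c_{p'}\rangle$ we get $\textup{conj}(c_{p'})|_Q\in A_0$ and finally $\textup{conj}(c)|_Q\in A_0$. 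I expect this to be the main obstacle: for primes $q\neq p$ with $Q$ not Sylow in $G$ one cannot simply pull a Sylow subgroup of $G$ back into $N$, and the proof must pass through the normalizers, using both $N\trianglelefteq G$ (which gives $A_0\trianglelefteq A$) and the $p$-power order of $\sigma$ (to force the $p'$-part of $c$ into $N_N(Q)$). The class-preserving case of (1) is analogous, additionally using that $\sigma$ induces a class-preserving automorphism of $p$-power order on $G/N$, trivial by Theorem~\ref{orderclasspreserving} since $p\nmid|G/N|$, together with the fact that a $G$-conjugacy class meeting $N$ splits into at most $[G:N]$ conjugacy classes of $N$.

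For (2), assume $\textup{Out}_{col}(N)$ is a $p'$-group but $\textup{Out}_{col}(G)$ is not; then a Sylow $p$-subgroup of $\textup{Aut}_{col}(G)$ meets the complement of $\textup{Inn}(G)$, giving $\sigma\in\textup{Aut}_{col}(G)$ of $p$-power order with $\sigma\notin\textup{Inn}(G)$. By (1), $\sigma|_N\in\textup{Aut}_{col}(N)$ has $p$-power order, so $\sigma|_N=\textup{conj}(n)|_N$ with $n$ a $p$-element of $N$; as $\sigma(n)=n$, $\sigma$ and $\textup{conj}(n)$ commute, so $\sigma_1:=\textup{conj}(n)^{-1}\sigma$ has $p$-power order, is not inner, and fixes $N$ pointwise --- hence also fixes pointwise a Sylow $p$-subgroup of $G$, which lies in $N$ since $p\nmid[G:N]$. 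Moreover $\sigma_1$ induces a Coleman automorphism of $G/N$ of $p$-power order (a Coleman automorphism descends to any quotient by a stabilized normal subgroup), and $\textup{Aut}_{col}(G/N)$ is a $p'$-group by Theorem~\ref{orderclasspreserving}, so $\sigma_1$ induces the identity on $G/N$. Lemma~\ref{lemmaopzn} then yields $\sigma_1=\textup{conj}(z)$ with $z\in O_p(Z(N))$, contradicting $\sigma_1\notin\textup{Inn}(G)$. The class-preserving statement follows in the same way with ``Coleman'' replaced by ``class-preserving'' throughout.
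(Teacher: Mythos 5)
The paper does not actually prove this theorem: it is quoted from M.~Hertweck and W.~Kimmerle \cite{HertweckKimmerle}, and the only proof in the paper that can serve as a benchmark is that of the extension, Theorem~\ref{Corollaryp'groupserfelijk}. Measured against that, your part (2) is essentially the paper's own method: reduce to $\sigma|_N=\textup{id}_N$ using part (1) and the hypothesis on $\textup{Out}_{col}(N)$ (resp.\ $\textup{Out}_c(N)$), observe that the induced automorphism of $G/N$ is trivial by Theorem~\ref{orderclasspreserving} since $p\nmid|G/N|$, note that the Sylow $p$-subgroups of $G$ lie in $N$, and finish with Lemma~\ref{lemmaopzn}. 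Your normalization is slightly slicker than the paper's: since $\sigma|_N=\textup{conj}(n)|_N$ forces $\sigma(n)=n^{-1}nn=n$, the correction $\textup{conj}(n)^{-1}\sigma$ already has $p$-power order (after replacing $n$ by its $p$-part, which is legitimate because the $p'$-part of $n$ is then central in $N$), so you avoid the paper's step of passing to a $p'$-power. Your Coleman case of part (1) is complete and correct: the passage through $A_0\trianglelefteq A$ in $\textup{Aut}(Q)$, the Frattini-type observation that $[N_G(Q):N_N(Q)]$ divides $[G:N]$, and the splitting of $c$ into commuting $p$- and $p'$-parts (handling the $p'$-part via $\rho^{|\sigma|}\in\textup{Inn}(N)$) all check out. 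This is genuinely the hard content of the theorem and you have supplied it in full.

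The one genuine gap is the class-preserving case of part (1), which you dismiss as ``analogous'' with two listed ingredients. First, ``splits into at most $[G:N]$ conjugacy classes of $N$'' is too weak: for the fixed-point argument you need that the number $k$ of $N$-classes inside the $G$-class $n^G\subseteq N$ is prime to $p$, which follows from $k=[G:NC_G(n)]$ dividing $[G:N]$, not from a mere upper bound. Second, and more importantly, a $p$-group $\langle\sigma\rangle$ acting on a set of size prime to $p$ only fixes \emph{some} $N$-class, whereas you need it to fix the class of the given element $n$. The missing step is that, because $\sigma$ induces the identity on $G/N$ and $N$-classes are stable under $N$-conjugation, $\sigma$ commutes with the transitive $G$-action on the set of $N$-classes in $n^G$; a permutation commuting with a transitive action and fixing one point fixes every point, so $\sigma(n)\in n^N$ as required. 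Without this transitivity argument the sketch does not close.
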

The second part of the theorem can be extended as follows. The proof is similar and is included for completeness' sake.
\begin{Theorem}\label{Corollaryp'groupserfelijk}
Let $N$ be a normal subgroup of a finite group $G$ and $p$ a prime number which does not divide the order of $G/N$. If $\textup{Out}_c(N) \cap \textup{Out}_{col}(N)$ is a $p'$-group, then so is $\textup{Out}_c(G) \cap \textup{Out}_{col}(G)$.
\end{Theorem}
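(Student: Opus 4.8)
The plan is to argue by contradiction, mimicking the proof of Theorem~\ref{p'groupserfelijk}(2). Suppose $\textup{Out}_c(G)\cap\textup{Out}_{col}(G)$ is not a $p'$-group. Writing $A=\textup{Aut}_c(G)\cap\textup{Aut}_{col}(G)$, a finite group containing $\textup{Inn}(G)$ as a normal subgroup, a Sylow $p$-subgroup $S$ of $A$ maps onto a Sylow $p$-subgroup of $A/\textup{Inn}(G)=\textup{Out}_c(G)\cap\textup{Out}_{col}(G)$, which is nontrivial by assumption, so $S\not\subseteq\textup{Inn}(G)$; I would fix $\sigma\in S\setminus\textup{Inn}(G)$, a non-inner class-preserving Coleman automorphism of $G$ of $p$-power order. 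The goal is then to contradict the hypothesis on $N$.

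Next I would exploit that $p\nmid|G/N|$ in three ways. By Theorem~\ref{p'groupserfelijk}(1), $\sigma|_N$ is simultaneously a class-preserving and a Coleman automorphism of $N$, of $p$-power order; since $\textup{Out}_c(N)\cap\textup{Out}_{col}(N)$ is a $p'$-group, $\sigma|_N$ is inner on $N$, say $\sigma|_N=\textup{conj}(n)|_N$ with $n\in N$, and after replacing $n$ by its $p$-part (which lies in the same $Z(N)$-coset, since $nZ(N)$ has $p$-power order) I may assume $n$ is a $p$-element. Second, every Sylow $p$-subgroup of $G$ is contained in $N$. Third, being class-preserving, $\sigma$ induces a class-preserving automorphism on $G/N$ whose order is both a power of $p$ and, by Theorem~\ref{orderclasspreserving}, prime to $p$; hence it is trivial, so $\sigma$ acts trivially on $G/N$.

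Then I would correct $\sigma$ to $\tau:=\textup{conj}(n)^{-1}\sigma$. Because $\sigma|_N=\textup{conj}(n)|_N$ fixes $n$, the automorphism $\sigma$ commutes with $\textup{conj}(n)$; as both are $p$-elements, $\tau$ is again a $p$-element. Moreover $\tau$ is a Coleman automorphism with $\tau|_N=\textup{id}_N$, $\tau$ acts trivially on $G/N$, $\tau$ fixes elementwise a Sylow $p$-subgroup of $G$ (such a subgroup lying in $N$), and $\tau\notin\textup{Inn}(G)$. Lemma~\ref{lemmaopzn} now forces $\tau$ to be conjugation by an element of $O_p(Z(N))\subseteq G$, i.e.\ $\tau\in\textup{Inn}(G)$, the desired contradiction.

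The only delicate point, and the step I expect to be the main obstacle, is keeping the corrected automorphism of $p$-power order: composing $\sigma$ with an arbitrary inner automorphism need not preserve $p$-power order, and the remedy is to take the conjugating element $n$ to be a $p$-element and to use that $\sigma$ fixes $n$, so that $\langle\sigma,\textup{conj}(n)\rangle$ is a $p$-group. Everything else is a routine adaptation of the proof of Theorem~\ref{p'groupserfelijk}.
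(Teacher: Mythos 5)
Your proposal is correct and follows essentially the same route as the paper: restrict $\sigma$ to $N$ via Theorem~\ref{p'groupserfelijk}(1), use the hypothesis to make $\sigma|_N$ inner, deduce triviality on $G/N$ from Theorem~\ref{orderclasspreserving}, and conclude with Lemma~\ref{lemmaopzn}. The only difference is bookkeeping for the $p$-power order of the corrected automorphism: the paper composes with $\textup{conj}(x^{-1})$ and then raises to the $p'$-part of the order, whereas you replace the conjugating element by its $p$-part and use that $\sigma$ fixes it so the two commute --- both are standard and valid.
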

\begin{proof}
Assume $\textup{Out}_c(N) \cap \textup{Out}_{col}(N)$ is a $p'$-group. Let $\sigma$ be a class-preserving Coleman automorphism of $G$ of $p$-power order. By Theorem \ref{p'groupserfelijk}, $\sigma$ induces a class-preserving Coleman automorphism of $N$ of $p$-power order. Hence, there exists $x \in N$ such that $\sigma|_N = \textup{conj}(x)|_N$. Put $\sigma_1 = \sigma \circ \textup{conj}(x^{-1})$. Let $m$ be the $p'$-part of the order of $\sigma_1$ and put $\sigma_2 = \sigma_1^m$. Then $\sigma_2$ differs from $\sigma$ only by an inner automorphism and a $p'$-power. Thus it is sufficient to prove that $\sigma_2$ is inner. Hence, we may assume that $\sigma$ is a class-preserving Coleman automorphism of $p$-power order such that $\sigma|_N = \textup{id}_N$. So, by Theorem \ref{orderclasspreserving}, $\sigma$ induces identity on $G/N$. As the Sylow $p$-subgroups of $G$ are contained in $N$, it follows, by Lemma \ref{lemmaopzn}, that $\sigma$ is an inner automorphism of $G$.
\end{proof}
\vspace{0.1cm}
An analogue to Coleman automorphims was introduced by F. Gross in \cite{grosspcentral}.
\begin{Definition}\label{pcentral} Let $G$ be a finite group and let $\sigma \in \textup{Aut}(G)$ be an automorphism. If for some prime divisor $p$ of $|G|$ the automorphism $\sigma$ centralizes a Sylow $p$-subgroup $P$ of $G$, i.e. $\sigma|_P = \textup{id}|_P$, then $\sigma$ is said to be a $p$-central automorphism of $G$.
\end{Definition}
\vspace{0.1cm}
Note that every Coleman automorphism of a group $G$ is, upto an inner automorphism, a $p$-central automorphism for any prime $p \in \pi(G)$. In \cite[Theorem B]{grosspcentral} F. Gross showed the following useful theorem (note that the formulation differs somewhat from the original).
\begin{Theorem} \label{Grosspcentralallp} Let $G$ be a finite non-abelian simple group and let $p$ be an odd prime dividing the order of $G$. Then every $p$-central automorphism of $G$ of $p$-power order is inner. \end{Theorem}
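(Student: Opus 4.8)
The plan is to invoke the classification of finite simple groups, after two easy preliminaries. First, every power of a $p$-central automorphism is again $p$-central (if $\sigma$ fixes a Sylow $p$-subgroup $P$ of $G$ elementwise, so does $\sigma^{k}$), and, since $\textup{Inn}(G)$ is normal in $\textup{Aut}(G)$, an automorphism is inner exactly when some $\textup{Aut}(G)$-conjugate of it is. Hence a hypothetical counterexample $\sigma$ may be replaced by $\sigma^{p^{m-1}}$, where $p^{m}$ is the order of the image $\bar\sigma$ in $\textup{Out}(G)$: this is still $p$-central of $p$-power order but now has outer order exactly $p$, so it suffices to rule out a $p$-central automorphism $\tau$ with $|\bar\tau| = p$. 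Identifying $G$ with $\textup{Inn}(G)$, such a $\tau$ fixes a Sylow $p$-subgroup $P$ elementwise, which is the same as $P \le C_{G}(\tau) := \{\, g \in G : \tau(g) = g \,\}$, i.e. $|C_{G}(\tau)|_{p} = |G|_{p}$. The goal is therefore to show $|C_{G}(\tau)|_{p} < |G|_{p}$ for every outer automorphism $\tau$ of the odd prime order $p$.

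If $G$ is alternating or sporadic this is immediate, since then $\textup{Out}(G)$ is a $2$-group (trivial, $C_{2}$, or $C_{2}\times C_{2}$ for $A_{6}$) and there is no outer automorphism of odd order at all. So assume $G$ is simple of Lie type over $\mathbb{F}_{q}$, with $q$ a power of the prime $r$. By Steinberg's description of $\textup{Aut}(G)$ via inner, diagonal, field and graph automorphisms, after conjugating $\tau$ by an inner automorphism (which affects neither $|C_{G}(\tau)|_{p}$ nor $p$-centrality) one may put $\tau$ in standard form, a product of a diagonal, a field and a graph part; at least one part is nontrivial. If the graph part is, then $p$ being odd forces $p=3$ and $G$ of type $D_{4}$ (a triality; the group ${}^{3}D_{4}(q)$ is handled the same way, its order-$3$ outer automorphism having fixed points $G_{2}(q)$); if the field part is, then $p \mid f$ where $q = r^{f}$; otherwise $\tau$ is inner-diagonal and $p \mid |\textup{Outdiag}(G)|$. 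In every case $C_{G}(\tau)$ is, up to a subgroup of index prime to $p$, a proper group of Lie type of strictly smaller order at $p$: the centralizer of a semisimple element of order $p$, a subsystem- or Levi-type reductive subgroup, in the inner-diagonal case; the group of $\mathbb{F}_{r^{f/p}}$-points of the same (possibly twisted) type, by Steinberg's theory of endomorphisms and Lang's theorem, when a field automorphism occurs; a subgroup of type $G_{2}$, ${}^{3}D_{4}$, $\mathrm{PSL}_{3}$ or $\mathrm{PSU}_{3}$ for a triality. I would then finish each case with a direct comparison of $p$-parts — counting positive roots and using the lifting-the-exponent identity $v_{p}(q^{i}-1) = v_{p}(q-1)+v_{p}(i)$, valid since $p$ is odd — to obtain $|C_{G}(\tau)|_{p} < |G|_{p}$, against the equality above. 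This also disposes of the defining-characteristic case $r=p$: there diagonal automorphisms have order prime to $p$, so $\tau$ has nontrivial field or graph part, and its fixed-point group is then a proper subfield subgroup (respectively a $G_{2}$-type subgroup) whose Sylow $p$-subgroup, a unipotent group over a strictly smaller field, is strictly smaller than $P$.

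The main obstacle I expect is the inner-diagonal case, and more broadly the bookkeeping of $p$-parts of centralizers of semisimple automorphisms: there $|C_{G}(\tau)|_{p}$ falls only just short of $|G|_{p}$ — for $G = \mathrm{PSL}_{p}(q)$ with $p \mid q-1$ one computes $|C_{G}(\tau)|_{p} = |G|_{p}/p$ — so the estimate must track a single factor of $p$ exactly, across all types, classical and exceptional, twisted and untwisted, rather than through a crude bound, and the isogeny subtleties in passing between the simply connected, intermediate and adjoint versions (e.g. $\mathrm{SL}$ versus $\mathrm{PGL}$ versus $\mathrm{PSL}$) are precisely where care is needed. A secondary nuisance is the short list of low-rank or small-field groups where the generic structure of these centralizers must be verified by hand, but such exceptions do not change the conclusion. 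Note finally that oddness of $p$ is essential: for $p=2$ the transposition automorphism of $A_{6}$ is outer yet $2$-central.
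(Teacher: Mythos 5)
First, a point of comparison: the paper does not prove this statement at all --- it is quoted (in slightly reformulated form) from F.\ Gross, \emph{Automorphisms which centralize a Sylow $p$-subgroup}, Theorem~B, so there is no internal proof to measure yours against. Your outline does follow the strategy of the actual external proof: reduce via the classification of finite simple groups to a type-by-type analysis of outer automorphisms and their fixed-point subgroups, dispatching alternating and sporadic groups because their outer automorphism groups are $2$-groups. Your closing observation that oddness of $p$ is essential (conjugation by a transposition is a non-inner $2$-central automorphism of $A_6$) is correct and well chosen.

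As a proof, however, the proposal has two genuine gaps. The first is the reduction to ``standard form.'' After replacing $\sigma$ by a power whose image in $\textup{Out}(G)$ has order exactly $p$, you assert that conjugating by an inner automorphism puts $\tau$ itself into the shape (diagonal)$\cdot$(field)$\cdot$(graph). That statement is only available when $\tau$ has order $p$ \emph{as an element of} $\textup{Aut}(G)$ (this is essentially Gorenstein--Lyons--Solomon, Theorem 4.9.1, and even there the conjugation is by $\textup{Inndiag}(G)$, not $\textup{Inn}(G)$). Your $\tau$ is only known to have $p$-power order with outer order $p$: the coset $\textup{Inn}(G)\varphi$ contains $p$-elements of order $p^2$ and higher, such as $\varphi\cdot\textup{conj}(g)$ with $g$ a $p$-element centralized by a field automorphism $\varphi$, and these are not inner-conjugate to any standard automorphism. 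You must either show such elements cannot centralize a Sylow $p$-subgroup (which needs its own argument, e.g.\ working inside a Sylow $p$-subgroup of the almost simple group $\langle\textup{Inn}(G),\tau\rangle$) or extend the fixed-point computation to every $p$-element of the coset; the proposal does neither. The second gap is that the heart of the theorem --- the verification $|C_G(\tau)|_p<|G|_p$ across all Lie types, twisted and untwisted, classical and exceptional, in the inner-diagonal, field and graph cases, with the inequality tight to a single factor of $p$ as you yourself compute for $\textup{PSL}_p(q)$ with $p\mid q-1$ --- is announced (``I would then finish each case\ldots'') but not carried out. Since that case analysis \emph{is} the content of Gross's theorem, what you have is a correct and well-informed plan of attack rather than a proof; citing Gross, as the paper does, would be the appropriate resolution.
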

\vspace{0.1cm}
M. Hertweck and W. Kimmerle continued in \cite{HertweckKimmerle} the study of these $p$-central automorphisms for simple groups and they proved the following interesting result.
\begin{Theorem} \label{Simplegroupspcentral} For any finite simple group $G$, there is a prime $p$ dividing $|G|$ such that $p$-central automorphisms of $G$ are inner automorphisms.
\end{Theorem}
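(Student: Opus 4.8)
The plan is to rephrase the statement as a centralizer condition and then run through the classification of finite simple groups. Since $Z(G)=1$ for a non-abelian simple group $G$, after identifying $G$ with $\textup{Inn}(G)\le\textup{Aut}(G)$ we have $C_{\textup{Aut}(G)}(P)\cap\textup{Inn}(G)=\{\textup{conj}(g):g\in C_G(P)\}$ for every Sylow $p$-subgroup $P$; and since $\textup{Inn}(G)$ is transitive on Sylow $p$-subgroups, ``every $p$-central automorphism of $G$ is inner'' is equivalent to $C_{\textup{Aut}(G)}(P)=\{\textup{conj}(g):g\in C_G(P)\}$. The case $G=C_p$ is trivial (the whole group is its Sylow $p$-subgroup), and if $\textup{Out}(G)=1$ then every automorphism of $G$ is inner, so we are done --- this already settles many sporadic groups. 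It thus remains to exhibit, for each of the remaining simple groups, one prime $p\mid |G|$ for which $C_{\textup{Aut}(G)}(P)$ consists only of inner automorphisms.

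For $G$ of Lie type over $\mathbb{F}_q$ with $q=p^f$, I would take $p$ to be the defining characteristic. Then a Sylow $p$-subgroup is a maximal unipotent subgroup $U$, with $N_G(U)=B$ a Borel subgroup $B=U\rtimes T$, and a short computation inside the ambient algebraic group $\bar G$ using the root subgroups gives $C_{\bar G}(U)=Z(\bar G)\cdot Z(U)$, hence $C_G(U)=Z(U)$. Writing $\textup{Aut}(G)=\textup{Inndiag}(G)\rtimes\langle\phi\rangle\rtimes\langle\gamma\rangle$ (inner-diagonal, field, graph automorphisms), a nontrivial field automorphism Frobenius-twists the coordinates of the root subgroups and a nontrivial graph automorphism permutes those subgroups, and neither effect is cancelled by conjugation by an element of $B$; an inner-diagonal automorphism centralizing $U$ is $\textup{conj}(z)$ for some $z\in Z(U)$. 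Hence $C_{\textup{Aut}(G)}(U)=\{\textup{conj}(z):z\in Z(U)\}=\{\textup{conj}(z):z\in C_G(U)\}$, as wanted. (For odd $p$ one can also use Theorem~\ref{Grosspcentralallp} to reduce to automorphisms that are of $p$-power order modulo $\textup{Inn}(G)$, which is convenient when $\textup{Out}(G)$ happens to be a $p$-group.) The small accidental isomorphisms ($\textup{PSL}_2(4)\cong\textup{PSL}_2(5)\cong A_5$, $\textup{PSL}_4(2)\cong A_8$, $\textup{Sp}_4(2)'\cong A_6$, and so on), the Suzuki and Ree groups, and the few groups with a non-generic automorphism group are inspected individually.

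For $G=A_n$ with $n\ge 5$ and $n\ne 6$, I would use $\textup{Aut}(A_n)=S_n$ and choose an odd prime $p$ with $n\equiv 0$ or $1\pmod{p}$; such a $p$ always exists (for $n$ odd take an odd prime divisor of $n$, for $n$ even an odd prime divisor of $n-1$). A Sylow $p$-subgroup $P$ of $S_n$ --- which, as $p$ is odd, is also one of $A_n$ --- is a direct product of iterated wreath products $C_p\wr\cdots\wr C_p$ acting transitively on the blocks of non-fixed points; a routine computation shows that $C_{S_n}(P)$ is the direct product of the centres of these wreath factors and of $S_{a_0}$, where $a_0=n\bmod p\le 1$, and each such centre is generated by a fixed-point-free permutation of order $p$, which is even since $p$ is odd. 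Thus $C_{S_n}(P)\le A_n$, and $C_{\textup{Aut}(A_n)}(P)=C_{S_n}(P)=C_{A_n}(P)$. The alternating group $A_6$ is handled separately, e.g. with $p=3$: an automorphism centralizing a Sylow $3$-subgroup fixes a $3$-cycle, so it does not interchange the two classes of order-$3$ elements and therefore lies in $S_6$, where the centralizer of a Sylow $3$-subgroup is again contained in $A_6$. Finally, for the sporadic groups with $\textup{Out}(G)=C_2$ one takes an odd prime $p$ --- the largest prime divisor of $|G|$ works in each of these cases --- and reads off from the structure of $\textup{Aut}(G)$ that the outer involution centralizes no Sylow $p$-subgroup.

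The hard part is the \emph{completeness} of the case analysis rather than any single difficult step: the conceptual core --- that in the defining characteristic a maximal unipotent subgroup is large enough that its centralizer in the entire automorphism group collapses onto its own centre --- is clean, but carrying this out uniformly (the simply-connected-versus-adjoint and exceptional-multiplier subtleties and the twisted groups among the Lie type families, the small accidental isomorphisms, and the group-by-group verification for the sporadic groups of outer order two) is where the real work lies.
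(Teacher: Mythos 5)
You should first be aware that the paper contains no proof of Theorem~\ref{Simplegroupspcentral}: it is quoted from \cite{HertweckKimmerle} and used as a black box throughout Section~2, so there is no in-paper argument to measure yours against. That said, your strategy is essentially the one used in the cited source: translate ``every $p$-central automorphism is inner'' into the centralizer condition $C_{\textup{Aut}(G)}(P)=\{\textup{conj}(g):g\in C_G(P)\}$ for a fixed Sylow $p$-subgroup $P$ (this reduction, and the $C_p$ case, are correct and complete as you state them), and then run through the classification: defining characteristic for Lie type, a prime $p$ with $n\bmod p\le 1$ for alternating groups, and individual inspection of $A_6$ and the sporadic groups with nontrivial outer automorphism group. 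Your alternating-group computation is sound: such a $p$ always exists, the centralizer of the Sylow $p$-subgroup in $S_n$ is generated by fixed-point-free products of $p$-cycles (even permutations for odd $p$), and the $A_6$ argument via the two classes of order-$3$ elements is correct.

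The gap is concentrated where you yourself locate it, but it is worth being precise about how large it is. In the Lie-type case, the statement that no nontrivial coset of $\textup{Inndiag}(G)$ in $\textup{Aut}(G)$ meets $C_{\textup{Aut}(G)}(U)$ --- i.e.\ that a field or graph automorphism cannot be corrected by an inner-diagonal one so as to centralize a maximal unipotent subgroup --- \emph{is} the theorem in that case; ``neither effect is cancelled by conjugation by an element of $B$'' is an assertion, not an argument, and it must either be carried out on root subgroups (including the twisted families, the Suzuki and Ree groups, and the groups with non-generic $\textup{Out}$) or be cited from where it is actually proved (the Gorenstein--Lyons--Solomon volumes, or \cite{grosspcentral} combined with Theorem~\ref{Grosspcentralallp} when $\textup{Out}(G)$ happens to be a $p$-group for the chosen odd $p$). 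Similarly, the sporadic groups with $\textup{Out}(G)=C_2$ require a genuine group-by-group ATLAS verification that the outer involution centralizes no Sylow subgroup for your chosen prime; you assert the outcome without performing the check. So: right approach, matching the reference the paper relies on, but as written this is a proof outline whose hardest steps are deferred rather than a self-contained proof.
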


\section{Groups with a self-centralizing characteristic subgroup}
In this section Coleman automorphisms of finite groups with a self-centralizing minimal non-trivial characteristic subgroup are investigated. Our results on these groups will partially answer several questions posed by M. Hertweck and W. Kimmerle in \cite{HertweckKimmerle}. Furthermore, these results will imply several results showing that the Coleman automorphisms of certain semi-direct products, in particular wreath products, have no non-inner Coleman automorphisms. The results in the recent paper \cite{haipermutational}, on  wreath products of symmetric groups, are straightforward applications of our results. Moreover, it is shown that a result of F. Gross, Corollary $2.4$ in \cite{grosspcentral}, can be slightly generalized.
\begin{Theorem}\label{selfcentralcharacteristic} Let $G$ be a normal subgroup of a finite group $K$. Let $N$ be a minimal non-trivial characteristic subgroup of $G$. If $C_K(N) \subseteq N$, then every Coleman automorphism of $K$ is inner. In particular, the normalizer problem holds for these groups. \end{Theorem}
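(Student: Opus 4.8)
The plan is to reduce to the case where $K$ is generated by $G$ together with a single Coleman automorphism acting faithfully, and then to exploit the minimality of $N$ among characteristic subgroups of $G$ to pin down the structure of $N$. Let $\sigma \in \textup{Aut}_{col}(K)$. Since $G \trianglelefteq K$, Lemma \ref{colemannormalinvariant} gives $\sigma|_G \in \textup{Aut}(G)$, and since $N$ is characteristic in $G$ it is normal in $K$, so $\sigma|_N \in \textup{Aut}(N)$ as well. The key observation is that a minimal non-trivial \emph{characteristic} subgroup of $G$ need not be abelian, but it is a direct product of isomorphic simple groups (it is a ``characteristically simple'' group); so $N \cong S^n$ for some simple group $S$, either elementary abelian or a direct product of copies of a non-abelian simple group. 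These two cases must be handled separately, and I expect the non-abelian case to be where the $p$-central automorphism machinery (Theorems \ref{Grosspcentralallp} and \ref{Simplegroupspcentral}) does the real work.

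First I would dispose of reductions. Replacing $\sigma$ by $\sigma \circ \textup{conj}(k)$ for suitable $k \in K$ changes nothing about whether $\sigma$ is inner, so by Theorem \ref{orderclasspreserving} it suffices to treat $\sigma$ of prime-power order $q^a$ for each prime $q \mid |K|$, and moreover (using that every Coleman automorphism is, up to an inner automorphism, $p$-central for any prime $p$) I may assume $\sigma$ fixes a Sylow $q$-subgroup of $K$ pointwise. The crucial point is that $C_K(N) \subseteq N$ forces $C_K(N) = Z(N)$; in the non-abelian case $Z(N) = 1$, so $K$ embeds into $\textup{Aut}(N)$, and in the abelian case $N = C_K(N)$ is itself the unique such subgroup and $K/N \hookrightarrow \textup{Aut}(N) = \textup{GL}(m,p)$.

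For the non-abelian case: here $N \cong S^n$ with $S$ non-abelian simple. By Theorem \ref{Simplegroupspcentral}, choose a prime $p$ such that all $p$-central automorphisms of $S$ are inner; since $N$ is a direct product of copies of $S$ and $\sigma$ permutes these factors (they are the minimal normal subgroups of $N$ contained in... actually the factors are characteristic-up-to-permutation), one shows $\textup{Out}_{col}(N)$ is a $p'$-group, and in fact (combining with Gross's Theorem \ref{Grosspcentralallp} across odd primes) that the relevant $p$-power-order Coleman automorphisms of $N$ are inner, realized by conjugation by an element $n \in N$. Replacing $\sigma$ by $\sigma \circ \textup{conj}(n^{-1})$, we may assume $\sigma|_N = \textup{id}_N$. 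But then $\sigma$ centralizes $N$, and since $C_K(N) \subseteq N$ we get that $\sigma$, viewed via the embedding $K \hookrightarrow \textup{Aut}(N)$, must be trivial; more carefully, $\sigma$ fixes $N$ pointwise and one deduces $\sigma = \textup{id}$ on all of $K$ because an automorphism of $K$ trivial on $N$ and of $p'$-part... — this last deduction is the step I expect to be the main obstacle, and it is handled by Lemma \ref{lemmaopzn}: $\sigma$ trivial on $N$ and of $p$-power order, fixing a Sylow $p$-subgroup of $K$, forces $\sigma$ to be conjugation by an element of $O_p(Z(N)) = 1$.

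For the abelian case, $N$ is an elementary abelian $p$-group and is the unique minimal characteristic subgroup, with $C_K(N) = N$, so $p \mid |K/N|$ is \emph{not} automatically excluded and we cannot directly apply $p'$-arguments. Instead, take $\sigma$ of $q$-power order fixing a Sylow $q$-subgroup of $K$ elementwise. If $q = p$: a Sylow $p$-subgroup $P$ of $K$ contains $N$ (as $N \trianglelefteq K$ is a $p$-group), so $\sigma|_N = \textup{id}_N$, and then applying Lemma \ref{lemmaopzn} with the normal subgroup $N$ — one first checks $\sigma$ is trivial on $K/N$ by a separate argument using that $\textup{Aut}_{col}$ restricted to the $\textup{GL}(m,p)$-action is constrained, or more simply by noting $\sigma$ acts as a Coleman, hence $p$-central-compatible, automorphism on the $p'$-group... here I would argue that $\sigma$ induces a Coleman automorphism of $K/N$ of $p$-power order which, since $\sigma$ fixes a Sylow $p$-subgroup, is the identity on a Sylow $p$-subgroup of $K/N$, i.e. on all of $K/N$ as the latter has order... no: $K/N$ need not be a $p'$-group. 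This genuine difficulty is resolved by induction on $|K|$: $G/N$ is a proper normal subgroup of $K/N$, and one feeds the setup back into the theorem, or alternatively one invokes that $N$ being self-centralizing and minimal makes $G$ itself have the property recursively. If $q \neq p$: then $\sigma$ restricted to $N$ is an automorphism of the elementary abelian $p$-group $N$ of $q$-power order, arising by conjugation by Sylow $q$-elements; a coprime-action argument (the fixed-Sylow hypothesis plus $N$ abelian) forces $\sigma|_N$ inner, reducing again to $\sigma|_N = \textup{id}$ and then to Lemma \ref{lemmaopzn}. In all cases $\sigma \in \textup{Inn}(K)$, so $\textup{Aut}_{col}(K) = \textup{Inn}(K)$ and the normalizer property follows from the Jackowski--Marciniak reformulation.
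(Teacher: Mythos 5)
Your overall strategy matches the paper's: decompose $N$ as a power of a simple group $S$, split into the abelian and non-abelian cases, use Theorem \ref{Simplegroupspcentral} to choose a good prime, reduce to $\sigma|_N = \textup{id}_N$, and finish with the centralizer condition and Lemma \ref{lemmaopzn}. However, there are two genuine gaps. First, in the non-abelian case you never establish that $\sigma$ stabilizes each simple factor $S_i$ of $N$ (you yourself note they are only ``characteristic-up-to-permutation'' and leave this unresolved), and the appeal to ``$\textup{Out}_{col}(N)$ is a $p'$-group'' is not legitimate as stated: $\sigma|_N$ is an automorphism of $N$ by Lemma \ref{colemannormalinvariant}, but it is \emph{not} in general a Coleman automorphism of $N$, since the conjugating elements witnessing the Coleman property live in $K$, not in $N$ (Theorem \ref{p'groupserfelijk} would transfer the Coleman property only if $p \nmid |K/N|$, which is not assumed). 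The paper's fix is precisely the point you skip: after arranging that $\sigma$ fixes pointwise a Sylow $q$-subgroup $P$ of $K$ containing a nontrivial Sylow $q$-subgroup $Q_i$ of each $S_i$ (where $q$ is the prime from Theorem \ref{Simplegroupspcentral}), the identity $\sigma(x^{-1}gx) = \sigma(x)^{-1} g\,\sigma(x)$ for $g \in Q_i$, $x \in S_i$, together with $\sigma(x) \in N$ and the fact that the normal closure of $Q_i$ in $N$ is $S_i$, forces $\sigma(S_i) = S_i$; then each $\sigma|_{S_i}$ is a $q$-central automorphism of $S_i$ and hence inner, with no reference to $\textup{Out}_{col}(N)$.

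Second, in the abelian case you explicitly get stuck on showing that $\sigma$ induces the identity on $K/N$ and propose to resolve this ``by induction on $|K|$''; but the inductive hypothesis is never verified ($G/N$ need not possess a minimal characteristic subgroup that is self-centralizing in $K/N$), so as written this step fails. The difficulty is illusory: once $\sigma|_N = \textup{id}_N$ (immediate here because $N$ is abelian and $\sigma|_N$ is conjugation by an element of $K$ normalizing $N$... more precisely $\sigma|_N$ is inner on the abelian group $N$, hence trivial), the computation $x^{-1}nx = \sigma(x^{-1}nx) = \sigma(x)^{-1}n\,\sigma(x)$ for all $n \in N$ gives $\sigma(x)x^{-1} \in C_K(N) \subseteq N$, which \emph{is} the statement that $\sigma$ induces the identity on $K/N$ --- the same argument you use in the non-abelian case. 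With that in hand, the subcase $q = p$ follows from Lemma \ref{lemmaopzn} exactly as you say, and the subcase $q \neq p$ follows from the order computation $\sigma^{p^r}(x) = x n^{p^r} = x$ forcing $n = e$ (or again from Lemma \ref{lemmaopzn}, since $O_p(Z(N)) = 1$ when $N$ is a $q$-group). Repair these two points and your argument coincides with the paper's.
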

\begin{proof}
Note that a minimal non-trivial characteristic subgroup of $G$ is necessarily characteristically simple. Hence, $N$ is the direct product $\Pi_{i=1}^n S_i$ of finite simple groups $S_i$ each isomorphic with a fixed simple group, say $S$. Abusing notation, we identify each $S_i$ with $S$. By Theorem \ref{Simplegroupspcentral}, there exists a prime number $q \in \pi\left(S\right)$ such that all $q$-central automorphisms of $S$ are inner. Let $\sigma \in \textup{Aut}(K)$ be a Coleman automorphism of $p$-power order. First, we show that we may assume this automorphism to be $q$-central. Let $Q$ be a Sylow $q$-subgroup of $S$. Then there exists a Sylow $q$-subgroup $P$ of $K$ such that $\Pi_{i=1}^n Q_i \subseteq P$, where $Q_i$ denotes the image of $Q$ under the canonical isomorphism from $S$ to $S_i$. Put $\rho_1= \sigma \circ \textup{conj}(h^{-1})$. As $\sigma|_P = \textup{conj}(h)|_P$ for some $h \in K$, it follows that $\rho_1$ is a Coleman automorphism which restricts to the identity on $P$. Let $m$ denote the $p'$-part of the order of $\rho_1$. Put $\rho_2= \rho_1^m$. Then $\rho_2$ is a Coleman automorphism of $p$-power order and $\rho_2|_P = \textup{id}_P$. As $\rho_2\sigma^{-m} \in \textup{Inn}(G)$ and $p \nmid m$, we may replace $\sigma$ by $\rho_2$. Hence, without loss of generality, we may assume that $$\sigma|_P = \textup{id}|_P.$$

So, indeed, $\sigma$ is $q$-central. In particular, for any $1 \leq i \leq n$, $\sigma|_{Q_i} = \textup{id}_{Q_i}$.  Note that the normal subgroup generated by $Q_i$ in $S_i$ is precisely $S_i$. By Lemma~\ref{colemannormalinvariant}, $\sigma|_{N} \in \textup{Aut}(N)$. Hence, for any $1 \leq i \leq n, x \in S_i$ and $g \in Q_i$, $$ \sigma(x^{-1} gx) = \sigma(x)^{-1} g \sigma(x) \in S_i.$$

As $\sigma(x) \in N$, this shows, for any $ 1 \leq i \leq n$, that $\sigma|_{S_i} \in \textup{Aut}(S_i)$. Thus, each $\sigma|_{S_i}$ is a $q$-central automorphism. Hence, there exists, for any $ 1 \leq i \leq n$, a $g_i \in S_i$ such that $$ \sigma|_{S_i} = \textup{conj}(g_i)|_{S_i}.$$
Thus, $\sigma|_N = \textup{conj}(\Pi_{i=1}^ng_i)$.

The proof now continues for two mutually exclusive cases.
\vspace{0.5cm}

First we consider the case that $S$ is non-abelian. Put $\sigma_1 = \sigma \circ \textup{conj}\left(\Pi_{i=1}^ng_i^{-1}\right)$. This is a Coleman automorphism of $K$ and $\sigma_1|_{N} = \textup{id}_N$. Let $m$ denote the $p'$-part of the order of $\sigma_1$. Put $\sigma_2= \sigma_1^m$, a Coleman automorphism of $p$-power order such that $\sigma_2|_N = \textup{id}_N$. As $\sigma_2 \sigma^{-m} \in \textup{Inn}(G)$ and $p\nmid m$, we may replace $\sigma$ by $\sigma_2 \sigma^{-m}$. Hence, without loss of generality, we may assume, except that $\sigma$ may not be $q$-central anymore, that $\sigma|_N = \textup{id}_N$. As $N$ is characteristic in $G$ it follows, for any $x \in K, n \in N$, that $$ x^{-1} nx = \sigma(x^{-1}nx) = \sigma(x)^{-1} n \sigma(x).$$
Hence, $$\sigma(x) x^{-1} \in C_K(N).$$
Since $S$ is non-abelian, it follows that $Z(N) = 1$.
This shows that $\sigma = \textup{id}_K$, as desired.
\vspace{0.5cm}

Second, we consider the case that $S$ is abelian, and thus $S = C_q$, a cyclic group of order $q$. As $S$ and $N$ are abelian and $\sigma|_N$ is inner, it follows that $\sigma|_N = \textup{id}_N$. Moreover, as $N$ is characteristic, it follows, for any $x \in K, n \in N$, that $$ x^{-1} n x = \sigma(x^{-1}nx) = \sigma(x)^{-1} n \sigma(x). $$
Hence, $\sigma(x)x^{-1} \in C_K(N) \subseteq N$. This shows that $$ \sigma|_N = \textup{id}_N \qquad \textnormal{ and } \qquad \sigma|_{G/N} = \textup{id}_{G/N}. $$

If $q=p$, it follows, by Lemma \ref{lemmaopzn} and $q$-centrality of $\sigma$, that $ \sigma$ is inner.

Assume $q \neq p$. Then, for any $x \in K$, there exists an $n \in N$ such that $ \sigma(x) = xn$. Let $p^r$ be the order of $\sigma$, then $$ x = \sigma^{p^r}(x) = xn^{p^r}.$$ As $N$ is a $q$-group, this implies that $n = e$ and hence $\sigma = \textup{id}$. So we have shown that any $p$-power order $\sigma \in \textup{Aut}_{col}(K)$ is inner. Hence, the result follows.
\end{proof}

Similar arguments are used to prove the following theorem. This theorem is similar to F. Gross' Corollary $2.4$ in \cite{grosspcentral}.
\begin{Theorem}\label{selfcentralpgroups} Let $P$ be a normal $p$-subgroup of a finite group $G$. If $C_{G}(P) \subseteq P$, then $G$ has no non-inner $p$-central automorphisms. In particular, $G$ has no non-inner Coleman automorphisms. \end{Theorem}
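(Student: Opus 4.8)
The plan is to mimic the argument of the abelian case of Theorem~\ref{selfcentralcharacteristic}, with the normal $p$-subgroup $P$ playing the role of the characteristic subgroup and with Lemma~\ref{lemmaopzn} applied to $N=P$; note that $O_p(Z(P))=Z(P)$ since $P$ is a $p$-group.

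First I would fix a $p$-central automorphism $\sigma$ of $G$, say $\sigma|_S=\textup{id}_S$ for some Sylow $p$-subgroup $S$ of $G$. Since $P$ is a normal $p$-subgroup, $PS$ is a $p$-subgroup of $G$ containing $S$, whence $P\subseteq S$; consequently $\sigma|_P=\textup{id}_P$, and in particular $\sigma(P)=P$. Next I would invoke the hypothesis $C_G(P)\subseteq P$ to show that $\sigma$ acts trivially on $G/P$: for every $x\in G$ and $n\in P$ one has
$$ x^{-1}nx=\sigma(x^{-1}nx)=\sigma(x)^{-1}n\,\sigma(x), $$
so $\sigma(x)x^{-1}\in C_G(P)\subseteq P$, i.e.\ $\sigma(x)\in xP$.

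Before applying Lemma~\ref{lemmaopzn} I must arrange that $\sigma$ has $p$-power order, which is the one point requiring a little care since a $p$-central automorphism need not a priori be a $p$-element. Writing $\sigma(x)=x\,c(x)$ with $c(x)\in P$, and using that $\sigma$ fixes $c(x)$ (as $c(x)\in P$), an easy induction gives $\sigma^k(x)=x\,c(x)^k$ for all $k$; since $P$ is a $p$-group, a suitable $p$-power of $\sigma$ is therefore the identity. (Equivalently, one may decompose $\sigma$ into its commuting $p$- and $p'$-parts, both of which are powers of $\sigma$ and hence satisfy the two properties just established, and note that the same computation forces the $p'$-part to be trivial.) Now Lemma~\ref{lemmaopzn}, applied with $N=P$, together with $\sigma|_P=\textup{id}_P$, the triviality of $\sigma$ on $G/P$, and the fact that $\sigma$ fixes the Sylow $p$-subgroup $S$ elementwise, yields that $\sigma$ is conjugation by an element of $O_p(Z(P))=Z(P)$, hence inner. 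This shows $G$ has no non-inner $p$-central automorphism. Finally, since every Coleman automorphism of $G$ coincides, up to an inner automorphism, with a $p$-central automorphism (as observed after Definition~\ref{pcentral}), it too is inner.

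The main obstacle is essentially only the bookkeeping of the third paragraph: reducing to the case that $\sigma$ is a $p$-element, because Lemma~\ref{lemmaopzn} is stated only for automorphisms of $p$-power order, and then checking that all three hypotheses of that lemma are genuinely available. Everything else is a direct transcription of the abelian case in the proof of Theorem~\ref{selfcentralcharacteristic}.
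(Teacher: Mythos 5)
Your proof is correct and follows essentially the same route as the paper: both rest on the observations that normality of $P$ forces $\sigma|_P=\textup{id}_P$ and hence $\sigma(g)g^{-1}\in C_G(P)\subseteq P$ for all $g$, followed by an appeal to Lemma~\ref{lemmaopzn} with $N=P$. The only organizational difference is that the paper assumes from the outset that $\sigma$ has $q$-power order for some prime $q$ and splits into the cases $q\neq p$ (where the computation $\sigma^k(g)=h^k g$ forces $\sigma=\textup{id}$) and $q=p$, whereas you use that same computation to show that any such $\sigma$ automatically has $p$-power order and then funnel everything through the lemma --- a slightly cleaner packaging of the same ingredients.
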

\begin{proof}
Let $q$ be a prime number. Let $\sigma$ be a $p$-central automorphism of $q$-power order. Hence, because $P$ is normal by the assumptions, $\sigma|_P = \textup{id}_P$. Therefore, for any $g \in G$ and $x \in P$, $$ g^{-1}xg = \sigma(g^{-1} xg) = \sigma(g)^{-1} x \sigma(g).$$
Thus, because of the assumption, $\sigma(g) g^{-1} \in C_G(P) \subseteq P$.

Again we consider two cases.

First, suppose $q \neq p$. Then, by the above, for any $g \in G$, there exists an $h \in P$ such that $$ \sigma(g) = hg.$$
If $q^n$ is the order of $\sigma$, then this shows that $$ g = \sigma^{q^n}(g) = h^{q^n} g.$$ As $h$ is a $p$-element, it follows that $h = e$. So we have shown that $\sigma = \textup{id}_G$.

Second, assume $q = p$. Because $\sigma|_P = \textup{id}_P$, $\sigma_{G/P} = \textup{id}_{G/P}$ and since $\sigma$ fixes a Sylow $p$-subgroup, it follows, by Lemma~\ref{lemmaopzn}, that $\sigma$ is an inner automorphism of $G$.

In both cases we have shown that any $p$-central automorphism of $G$ of prime power order is inner. As any Coleman automorphism is $p$-central upto an inner automorphism, the result follows.
\end{proof}
As immediate consequence of Theorem \ref{selfcentralpgroups} we get the following.
\newtheorem{Corollary}[subsection]{Corollary}\begin{Corollary}\label{pgroups} Let $P$ be a finite $p$-group and let $H$ be a finite group. Let $G = P \rtimes H$ be the associated semidirect product. If $C_{G}(P) \subseteq P$, then $G$ has no non-inner Coleman automorphisms. \end{Corollary}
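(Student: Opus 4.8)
The plan is to observe that this is nothing more than a specialization of Theorem~\ref{selfcentralpgroups}: I would simply check that the group $G = P \rtimes H$ satisfies the hypotheses of that theorem and then quote it. Concretely, in a semidirect product $G = P \rtimes H$ the subgroup $P$ is by construction a normal subgroup of $G$, and it is a $p$-group by assumption, so $P$ is a normal $p$-subgroup of $G$. The remaining hypothesis $C_G(P) \subseteq P$ is exactly what is assumed in the statement of the corollary.

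With both hypotheses in hand, Theorem~\ref{selfcentralpgroups} applies verbatim and tells us that $G$ has no non-inner $p$-central automorphisms, and in particular no non-inner Coleman automorphisms. So the entire argument is the single sentence ``apply Theorem~\ref{selfcentralpgroups} with this $P$''.

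There is essentially no obstacle here; the only thing one might want to spell out is the (elementary) fact that the base group of a semidirect product is normal, which is part of the definition of $\rtimes$ and needs no proof. If one wished to phrase the corollary slightly more generally, one could replace ``$G = P \rtimes H$'' by ``$G$ has a normal Sylow $p$-subgroup $P$'' or ``$P \trianglelefteq G$ is a $p$-group'', but since the paper states it for the semidirect product I would keep the formulation as given and present the one-line deduction.

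\begin{proof}
Since $G = P \rtimes H$, the subgroup $P$ is normal in $G$, and it is a $p$-group by hypothesis; thus $P$ is a normal $p$-subgroup of $G$ with $C_G(P) \subseteq P$. The claim is now immediate from Theorem~\ref{selfcentralpgroups}.
\end{proof}
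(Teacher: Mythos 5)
Your proposal is correct and matches the paper exactly: the paper presents this corollary as an ``immediate consequence of Theorem~\ref{selfcentralpgroups}'' with no further argument, relying precisely on the observation that $P$ is a normal $p$-subgroup of the semidirect product. Nothing is missing.
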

Recall that the wreath product $G \wr H$ of two finite groups $G,H$ is defined as the semidirect product $\Pi_{h \in H} S \rtimes H$, where $H$ acts by left multiplication on the indices. An immediate consequence of Theorem \ref{selfcentralcharacteristic} is the following.
\begin{Corollary}\label{simple} Let $S$ be a finite simple group, $I$ a finite set of indices, and $H$ any finite group. If $G=\Pi_{i \in I}S \rtimes H$ is a group such that $C_G(\Pi_{i \in I} S) \subseteq Z( \Pi_{i \in I} S)$, then $G$ has no non-inner Coleman automorphism, and thus, the normalizer problem holds for $G$. In particular, the wreath product $ S \wr H$ has no non-inner Coleman automorphisms. \end{Corollary}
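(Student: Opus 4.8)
The plan is to deduce the statement directly from Theorem~\ref{selfcentralcharacteristic}. Write $B = \Pi_{i \in I} S$, a normal subgroup of $G$. If $I$ is empty, the hypothesis $C_G(B) \subseteq Z(B)$ forces $G$ to be trivial and there is nothing to prove, so I may assume $I \neq \emptyset$. Since $S$ is simple, the finite direct power $B$ is a non-trivial characteristically simple group (either a direct power of a non-abelian simple group, or elementary abelian when $S \cong C_p$), and hence $B$ has no proper non-trivial characteristic subgroup; equivalently, $B$ is a minimal non-trivial characteristic subgroup of itself. Applying Theorem~\ref{selfcentralcharacteristic} with $K = G$, with $B$ in the role of the normal subgroup, and with $N = B$, the only hypothesis left to check is $C_G(N) \subseteq N$; but $C_G(N) = C_G(B) \subseteq Z(B) \subseteq B = N$ by assumption. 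The theorem then yields that every Coleman automorphism of $G$ is inner, and consequently the normalizer property holds for $G$.

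For the last assertion I would specialize to the wreath product $W = S \wr H = B \rtimes H$, where now $B = \Pi_{h \in H} S$, and reduce to the first part by checking $C_W(B) \subseteq Z(B)$. Here $H$ acts on $B$ by permuting the direct factors $S_k$ ($k \in H$) according to the regular action, so for $h \neq 1$ one has $hk \neq k$ for every $k \in H$; therefore the automorphism of $B$ induced by such an $h$ carries the factor $S_k$ onto the distinct factor $S_{hk}$, and in particular it cannot be an inner automorphism of $B$, since inner automorphisms of $B$ act within each direct factor. Now let $bh \in C_W(B)$ with $b \in B$ and $h \in H$. Conjugation by $bh$ acts on $b' \in B$ as $b' \mapsto b\,({}^{h}b')\,b^{-1}$, so triviality of this map forces the automorphism of $B$ induced by $h$ to coincide with conjugation by $b^{-1} \in B$; by the observation just made this is possible only if $h = 1$, and then $b \in C_B(B) = Z(B)$. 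Hence $C_W(B) = Z(B)$, and the first part of the argument applies to $W$.

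The bulk of the proof is a direct appeal to Theorem~\ref{selfcentralcharacteristic} together with the standard fact that a finite direct power of a simple group is characteristically simple; the one point that needs a little care is the verification of $C_W(B) \subseteq Z(B)$ for the wreath product, where I expect the main (mild) obstacle to be arguing cleanly that a non-trivial coordinate permutation of $\Pi_{h \in H} S$ cannot be realized as an inner automorphism of the base group — this is exactly where the fixed-point-freeness of the regular $H$-action is used.
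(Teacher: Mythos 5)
Your proposal is correct and matches the paper's intended route: the paper gives no separate proof, presenting the corollary as an immediate consequence of Theorem~\ref{selfcentralcharacteristic}, and your application of that theorem with $K=G$, the normal subgroup $B=\Pi_{i\in I}S$, and $N=B$ (a characteristically simple, hence minimal non-trivial characteristic, subgroup of itself with $C_G(B)\subseteq Z(B)\subseteq B$) is exactly the right instantiation. Your verification that $C_{S\wr H}(B)=Z(B)$ via the fixed-point-freeness of the regular action is a correct filling-in of a detail the paper leaves implicit.
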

As straightforward applications of our theorems, we mention a recent result by J. Hai and S. Ge.
\begin{Corollary} Let $k,n$ be positive integers. Then $S_k \wr S_n$ has no non-inner Coleman automorphisms. In particular, $S_k \wr S_n$ satisfies the normalizer property. \end{Corollary}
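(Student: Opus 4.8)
The plan is to present $S_k\wr S_n$ as a semidirect product of the shape handled by Corollary~\ref{simple}, with normal subgroup $\mathrm{soc}(S_k)^n$.

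\emph{Reduction.} The case $n=1$ gives $S_k$, and the case $k=1$ gives $S_k\wr S_n\cong S_n$; in both situations the argument below, applied to a single symmetric group, already settles the claim (for $S_n$ one uses $S_n=\mathrm{soc}(S_n)\rtimes L_n$ with $\mathrm{soc}(S_n)$ characteristically simple --- $A_n$ for $n\geq 5$, $V_4$ for $n=4$, $C_3$ for $n=3$, $C_2$ or trivial for $n\leq 2$ --- together with $C_{S_n}(\mathrm{soc}(S_n))\subseteq\mathrm{soc}(S_n)$). So assume $k\geq 2$.

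\emph{The decomposition.} Put $M=\mathrm{soc}(S_k)$, so $M=A_k$ (non-abelian simple) for $k\geq 5$, $M=V_4$ for $k=4$, $M=C_3$ for $k=3$, $M=C_2$ for $k=2$. In each case $S_k=M\rtimes L$ for a suitable complement $L$ ($L\cong C_2$ for $k\geq 5$ or $k=3$, $L\cong S_3$ for $k=4$, $L=1$ for $k=2$), and $M$ is, abstractly, a direct product $\prod_{j}T$ of copies of a finite simple group $T$; hence $M^n=\prod_{i=1}^n M_i$ is a direct product $\prod_{i\in I}T$ over a finite index set $I$. Since $M$ is characteristic in $S_k$, the base-group power $M^n\leq S_k^n$ is stabilised by the top group $S_n$, and splitting off the complements coordinatewise yields
\[
S_k\wr S_n\;=\;M^n\rtimes(L\wr S_n),
\]
a semidirect product with normal subgroup $M^n\cong\prod_{i\in I}T$.

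\emph{The centraliser condition and conclusion.} Write an element of $G:=S_k\wr S_n$ as $b\tau$ with $b=(b_1,\dots,b_n)\in S_k^n$ and $\tau\in S_n$; conjugation by $\tau$ permutes the factors $M_i$ of $M^n$, while conjugation by $b$ acts on $M_i$ through $b_i\in S_k$. Testing the condition that $b\tau$ centralises $M^n$ on elements of $M^n$ supported in a single coordinate forces first $\tau=\mathrm{id}$ and then $b_i\in C_{S_k}(M)$ for every $i$. One now checks (using that $C_{S_k}(M)$ is a normal subgroup of $S_k$ with $C_{S_k}(M)\cap M=Z(M)$, together with the known normal subgroups of $S_k$) that $C_{S_k}(M)=M$ when $M$ is abelian ($k\leq 4$) and $C_{S_k}(M)=1$ when $M=A_k$ ($k\geq 5$); in all cases $C_{S_k}(M)\subseteq M$, so $C_G(M^n)\subseteq M^n$ and therefore $C_G(M^n)=Z(M^n)$. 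Thus $G=\prod_{i\in I}T\rtimes(L\wr S_n)$ satisfies $C_G\bigl(\prod_{i\in I}T\bigr)\subseteq Z\bigl(\prod_{i\in I}T\bigr)$, so Corollary~\ref{simple} applies and $S_k\wr S_n$ has no non-inner Coleman automorphisms; the normalizer property follows as recalled in the introduction.

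\emph{Main obstacle.} The only non-formal point is the centraliser computation: showing that the $S_n$-component $\tau$ of any centralising element is trivial (which needs, for each index moved by $\tau$, an explicit element of $M^n$ that $b\tau$ fails to fix) and correctly identifying $C_{S_k}(\mathrm{soc}(S_k))$ in the regimes $k=2,3,4$ and $k\geq 5$. Everything else is bookkeeping with wreath products and a direct appeal to Corollary~\ref{simple}.
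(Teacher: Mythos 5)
Your argument is correct and is exactly the intended ``straightforward application'': the paper states this corollary without proof, and the evident route is to feed the socle $\mathrm{soc}(S_k)^n$ of the base group into Corollary~\ref{simple} (equivalently, Theorem~\ref{selfcentralcharacteristic}), which is precisely what you do, including the correct centralizer computations $C_{S_k}(A_k)=1$ for $k\geq 5$ and $C_{S_k}(\mathrm{soc}(S_k))=\mathrm{soc}(S_k)$ for $k\leq 4$. I see no gaps; the only (harmless) extra work is exhibiting the complement $L\wr S_n$, which Corollary~\ref{simple} as stated requires but which one could avoid by applying Theorem~\ref{selfcentralcharacteristic} directly to $S_k^n\trianglelefteq S_k\wr S_n$ with $N=\mathrm{soc}(S_k)^n$ its minimal non-trivial characteristic subgroup.
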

Another application of the reasoning in our theorems is in the holomorph of groups. Recall that, for a finite group $G$, the holomorph of $G$ is defined as $\textup{Hol}(G) = G \rtimes \textup{Aut}(G)$, where the action of $\textup{Aut}(G)$ is by evaluating the automorphisms.
\begin{Theorem} Let $S$ be a non-abelian finite simple group. Then $\textup{Hol}(S)$ has no non-inner Coleman automorphisms. \end{Theorem}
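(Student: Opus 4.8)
The plan is to deduce this from Theorem~\ref{selfcentralcharacteristic}. The naive attempt---take the normal subgroup $S\trianglelefteq\textup{Hol}(S)$, which, being simple, is its own minimal non-trivial characteristic subgroup---fails: writing $\textup{Hol}(S)=S\rtimes\textup{Aut}(S)$, a short computation with the semidirect product shows that an element $(g,\varphi)$ centralizes $S$ exactly when $\varphi=\textup{conj}(g)$, so that
\[
C_{\textup{Hol}(S)}(S)=\{\,(g,\textup{conj}(g)) : g\in S\,\}
\]
is a \emph{second} copy of $S$ in $\textup{Hol}(S)$, not contained in $S$. So the first step is to enlarge $S$ to the normal subgroup $G:=S\cdot C_{\textup{Hol}(S)}(S)$ and apply the theorem to this $G$.

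Write $\bar S:=C_{\textup{Hol}(S)}(S)$. I would record the following routine facts. As the centralizer of a normal subgroup, $\bar S\trianglelefteq\textup{Hol}(S)$; the projection $\textup{Hol}(S)\to\textup{Aut}(S)$ restricts on $\bar S$ to an isomorphism onto $\textup{Inn}(S)$, so $\bar S\cong S/Z(S)=S$ since $S$ is non-abelian simple; and $\bar S$ centralizes $S$ with $S\cap\bar S=Z(S)=1$, so $G=S\bar S$ is an internal direct product $S\times\bar S\cong S\times S$ and is normal in $\textup{Hol}(S)$ as a product of two normal subgroups. Because $S$ is non-abelian simple, the only normal subgroups of $G\cong S\times S$ are $1$, $S$, $\bar S$ and $G$, and of these only $1$ and $G$ are stable under all of $\textup{Aut}(G)$ (an automorphism of $S\times S$ may interchange the two simple direct factors); hence $G$ is the unique minimal non-trivial characteristic subgroup of $G$, and we put $N:=G$.

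It then remains to check the self-centralizing hypothesis $C_{\textup{Hol}(S)}(N)\subseteq N$. Any element of $\textup{Hol}(S)$ centralizing $N=S\times\bar S$ centralizes $S$, hence lies in $\bar S$; lying in $\bar S$ it already centralizes $S$, so to centralize all of $N$ it must also centralize $\bar S$, i.e.\ lie in $Z(\bar S)=1$. Thus $C_{\textup{Hol}(S)}(N)=1\subseteq N$, and Theorem~\ref{selfcentralcharacteristic}---applied with $K=\textup{Hol}(S)$, the normal subgroup $G=S\cdot C_{\textup{Hol}(S)}(S)$, and $N=G$---gives that every Coleman automorphism of $\textup{Hol}(S)$ is inner; in particular $\textup{Hol}(S)$ satisfies the normalizer property. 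The one genuine point is the opening observation that $S$ fails to be self-centralizing in $\textup{Hol}(S)$; once the right normal subgroup $S\cdot C_{\textup{Hol}(S)}(S)\cong S\times S$ is identified, the structure of $S\times S$ and the centralizer computation are routine and the conclusion is immediate.
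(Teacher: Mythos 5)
Your proof is correct, and it takes a genuinely different route from the paper's. The paper argues directly inside $\textup{Hol}(S)$: it fixes the prime $q$ supplied by Theorem~\ref{Simplegroupspcentral}, replaces $\sigma$ by a $q$-central Coleman automorphism of $p$-power order, and then trivializes $\sigma$ in stages --- first on $S$, then on $S\rtimes\textup{Inn}(S)/S\cong S$, hence on all of $S\rtimes\textup{Inn}(S)$, and finally on $\textup{Hol}(S)$ via the observation $C_{\textup{Hol}(S)}(S\rtimes\textup{Inn}(S))=1$. You instead package the entire argument into a single application of Theorem~\ref{selfcentralcharacteristic}, after noting that $S\cdot C_{\textup{Hol}(S)}(S)$ coincides with $S\rtimes\textup{Inn}(S)$ and is an internal direct product isomorphic to $S\times S$. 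Every step you need does check out: $C_{\textup{Hol}(S)}(S)$ is a normal diagonal copy of $S$ meeting $S$ trivially and commuting with it; for non-abelian simple $S$ the only normal subgroups of $S\times S$ are the four obvious ones, and the factor-swapping automorphism disqualifies the two proper non-trivial ones from being characteristic, so taking $N=G$ as the minimal non-trivial characteristic subgroup is legitimate; the computation $C_{\textup{Hol}(S)}(N)=Z\left(C_{\textup{Hol}(S)}(S)\right)=1$ is right; and nothing in the statement or proof of Theorem~\ref{selfcentralcharacteristic} requires $N$ to be proper in $G$ --- the proof only uses that $N$ is characteristically simple and normal in $K$. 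Your route is the more economical one and makes visible that the holomorph theorem is really the case $N\cong S\times S$ of the general theorem; what the paper's hands-on version buys is that it never has to identify $S\rtimes\textup{Inn}(S)$ as a direct product or analyze which of its subgroups are characteristic, at the cost of repeating inline essentially the same reduction machinery already used to prove Theorem~\ref{selfcentralcharacteristic}.
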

\begin{proof}
Let $p$ be a prime number. Let $\sigma \in \textup{Aut}_{col}(\textup{Hol}(S))$ of $p$-power order. By Theorem \ref{Simplegroupspcentral}, there exists a prime number $q$ such that $q$-central automorphisms of $S$ are inner. We may assume $\sigma$ to be $q$-central of $p$-power order, by modifying $\sigma$ by an inner automorphism as in the proof of Theorem \ref{selfcentralcharacteristic} and taking a suitable power. By Lemma \ref{colemannormalinvariant}, $\sigma|_{S \rtimes S} \in \textup{Aut}(S \rtimes \textup{Inn}(S))$ and $\sigma|_S \in \textup{Aut}(S)$. Hence, $\sigma$ induces a $q$-central automorphism of $p$-power order on $S \rtimes \textup{Inn}(S)/S \cong \textup{Inn}(S) \cong S$. Because of Theorem \ref{Simplegroupspcentral}, $\sigma$ induces an inner automorphism $\textup{conj}(\textup{conj}(\overline{x}))$ on $S \rtimes \textup{Inn}(S)/ S$, where $x \in C_S(Q)$ is of $p$-power order for some Sylow $q$-subgroup $Q$ of $S$. Hence, $\sigma_1 = \sigma \circ \textup{conj}(\textup{conj}(x^{-1}))$ is still $q$-central, as it fixes $Q \rtimes \textup{conj}(Q)$ elementwise and induces the identity on $S \rtimes \textup{Inn}(S)/S$. Thus, by Theorem \ref{Simplegroupspcentral}, $\sigma_1|_S = \textup{conj}(s)$ for some $s \in C_S(Q)$. Hence, $\sigma_2 = \sigma_1 \circ \textup{conj}(s^{-1})$ is a $q$-central automorphism of $S \rtimes \textup{Inn}(S)$, $\sigma_2$ restricts to the identity on $S$ and $\sigma_2$ induces identity on $S \rtimes \textup{Inn}(S)/S$. Write $\sigma_2$ as $\sigma$ for convenience. Let $w, r \in S$ and $q \in S$, then \begin{align*} \sigma( w \textup{conj}(r))^{-1} q \sigma( w \textup{conj}(r)) &= \sigma ( \textup{conj}(r^{-1}) w^{-1} q w \textup{conj}(r)) \\ &= \textup{conj}(r^{-1}) w^{-1} q w\textup{conj}(r). \end{align*}
Hence, and because $\sigma|_{S \rtimes \textup{Inn}(S)/S} = \textup{Id}_{S \rtimes \textup{Inn}(S) / S}$, $$ w \textup{conj}(r)\sigma( w \textup{conj}(r))^{-1}  \in C_{S \rtimes \textup{Inn}(S)}(S) \cap S = 1.$$ Thus, $\sigma|_{S \rtimes \textup{Inn}(S)} = \textup{Id}_{S \rtimes \textup{Inn}(S)}$. A similar argument shows that, for $g \in \textup{Hol}(S)$, $$\sigma(g) g^{-1} \in C_{\textup{Hol}(S)}(S \rtimes \textup{Inn}(S)).$$ Clearly, $C_{\textup{Hol}(S)}(S \rtimes \textup{Inn}(S)) = 1$. Hence, $ \sigma = \textup{Id}_{\textup{Hol}(S)}$.
\end{proof}
A last and easy application of our theorems is an alternative proof and extension of a recent result of J. Hai, S. Ge and W. He \cite{Haiholomorphnilpotentsymmetric}.
\begin{Corollary} Let $N$ be a finite nilpotent group. Then $\textup{Hol}(N)$ has no non-inner Coleman automorphism. In particular, $\textup{Hol}(N)$ has no non-inner Coleman automorphisms. \end{Corollary}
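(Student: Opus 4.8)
The plan is to reduce to Theorem~\ref{selfcentralpgroups} by exploiting the structure of $\textup{Hol}(N)$ together with Proposition~\ref{directprodcoleman} and Theorem~\ref{p'groupserfelijk}. Since $N$ is nilpotent, write $N = \Pi_{p} N_p$ as the direct product of its Sylow subgroups. Then $\textup{Aut}(N) = \Pi_p \textup{Aut}(N_p)$ and the action respects this decomposition, so $\textup{Hol}(N) \cong \Pi_p \textup{Hol}(N_p)$. By Proposition~\ref{directprodcoleman} it suffices to show each factor $\textup{Hol}(N_p)$ has no non-inner Coleman automorphisms; hence without loss of generality I may assume $N = N_p$ is a finite $p$-group.

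Now fix $N$ a finite $p$-group and set $G = \textup{Hol}(N) = N \rtimes \textup{Aut}(N)$. The key structural observation is that $N$ is a normal $p$-subgroup of $G$ which is self-centralizing: indeed $C_G(N) = C_N(N) \rtimes C_{\textup{Aut}(N)}(N) = Z(N) \rtimes 1 \subseteq N$, using that an automorphism centralizing all of $N$ is the identity. Therefore $C_G(N) \subseteq N$, and Theorem~\ref{selfcentralpgroups} applies directly: $G$ has no non-inner $p$-central automorphisms, and in particular no non-inner Coleman automorphisms. This gives the claim for the Sylow case, and recombining via Proposition~\ref{directprodcoleman} yields the general statement.

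The one point requiring a little care is the identification $\textup{Hol}(N) \cong \Pi_p \textup{Hol}(N_p)$: one must check that the action of $\textup{Aut}(N) = \Pi_p \textup{Aut}(N_p)$ on $N = \Pi_p N_p$ is the "block-diagonal" one, i.e.\ that $\textup{Aut}(N_q)$ acts trivially on $N_p$ for $p \neq q$. This is immediate because $N_p$ is the unique Sylow $p$-subgroup of $N$, hence characteristic, and any automorphism of $N$ restricts to one of $N_p$; the decomposition $\textup{Aut}(N)=\Pi_p\textup{Aut}(N_p)$ is precisely the statement that these restrictions are independent. I do not expect a genuine obstacle here — the proof is essentially a matter of assembling the pieces (nilpotent $\Rightarrow$ direct product of Sylows, Proposition~\ref{directprodcoleman}, and the self-centralizing normal $p$-subgroup criterion of Theorem~\ref{selfcentralpgroups}). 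The slight awkwardness is only that the $q$-central machinery of Theorem~\ref{selfcentralcharacteristic} is not needed at all in this abelian-prime-power world; everything funnels through the $p$-group theorem.
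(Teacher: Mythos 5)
Your reduction to the case of a finite $p$-group $N$, via $\textup{Hol}(N) \cong \Pi_p \textup{Hol}(N_p)$ and Proposition~\ref{directprodcoleman}, is exactly the paper's first step and is fine. The gap is in the second step: the claim that $C_{\textup{Hol}(N)}(N) = C_N(N) \rtimes C_{\textup{Aut}(N)}(N) = Z(N) \subseteq N$ is false whenever $N$ is non-abelian, so Theorem~\ref{selfcentralpgroups} cannot be applied with the normal $p$-subgroup taken to be $N$ itself. The centralizer of the base group in a semidirect product does not split that way: an element $(n,\alpha)$ of $\textup{Hol}(N)$ centralizes $N$ precisely when $\alpha$ is the inner automorphism of $N$ determined by $n$, so
$$ C_{\textup{Hol}(N)}(N) = \left\lbrace (n, \textup{conj}(n)) \mid n \in N \right\rbrace, $$
a twisted diagonal copy of $N$. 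For $n \notin Z(N)$ the element $(n,\textup{conj}(n))$ has non-trivial automorphism component and hence does not lie in $N$; already for $N = D_4$ one has $C_{\textup{Hol}(N)}(N) \not\subseteq N$. Your argument is therefore only correct in the case that $N$ is abelian.

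The repair is small and is what the paper does: the twisted diagonal above is contained in $M = N \rtimes \textup{Inn}(N)$, which is a normal $p$-subgroup of $\textup{Hol}(N)$ (since $\textup{Inn}(N) \cong N/Z(N)$ is a $p$-group and $\textup{Inn}(N)$ is normal in $\textup{Aut}(N)$), and $C_{\textup{Hol}(N)}(M) \subseteq C_{\textup{Hol}(N)}(N) \subseteq M$. Applying Theorem~\ref{selfcentralpgroups} to $M$ rather than to $N$ closes the gap; the rest of your proof then goes through unchanged.
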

\begin{proof}
Put $N = P_1 \times ... \times P_k$ in its Sylow decomposition. Then $\textup{Hol}(N) \cong \Pi_{i=1}^k \textup{Hol}(P_i)$. By Proposition \ref{directprodcoleman}, it is sufficient to show the result in the case that $N$ is a $p$-group for some prime number $p$. Clearly, $N \rtimes \textup{Inn}(N)$ is a self-central normal $p$-subgroup of $\textup{Hol}(N)$. Thus, by Theorem \ref{selfcentralpgroups}, $\textup{Hol}(N)$ has no non-inner Coleman automorphisms.
\end{proof}
The following lemma easily follows from the fact that the automorphism group of the direct product of characteristic subgroups is the direct product of the automorphism groups.
\begin{Lemma}\label{directprodautomorphisms} Let $S_1,...,S_n$ be non-isomorphic finite simple groups and $k_1, ..., k_n$ positive integers. Then $\textup{Aut}(S_1^{k_1}\times ... \times S_n^{k_n}) = \textup{Aut}(S_1^{k_1}) \times ... \textup{Aut}(S_n^{k_n})$. \end{Lemma}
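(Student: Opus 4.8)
The plan is to deduce the statement from two facts. Write $G := S_1^{k_1}\times\cdots\times S_n^{k_n}$, regarded as the internal direct product of the subgroups $H_i := 1\times\cdots\times S_i^{k_i}\times\cdots\times 1\cong S_i^{k_i}$. The first fact is the general principle cited in the statement: if a finite group $G$ is the internal direct product of \emph{characteristic} subgroups $H_1,\dots,H_n$, then $\phi\mapsto(\phi|_{H_1},\dots,\phi|_{H_n})$ is an isomorphism $\textup{Aut}(G)\to\textup{Aut}(H_1)\times\cdots\times\textup{Aut}(H_n)$. I would prove this by the usual four observations: the map is well defined because each $H_i$ is characteristic; it is a homomorphism; it is injective because the $H_i$ generate $G$; and it is surjective because, given $\alpha_i\in\textup{Aut}(H_i)$, the rule $h_1\cdots h_n\mapsto\alpha_1(h_1)\cdots\alpha_n(h_n)$ is a well-defined automorphism of $G$ (here one uses that distinct $H_i$ commute elementwise and pairwise intersect trivially) with an inverse built the same way. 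None of this is specific to simple groups.

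The substantive point is the second fact: each $H_i$ is characteristic in $G$. I would establish this by giving an intrinsic description of $H_i$, namely that $H_i$ equals the subgroup $T_i$ generated by all minimal normal subgroups of $G$ isomorphic to $S_i$ (equivalently, the $S_i$-homogeneous component of the socle, which here is all of $G$ since $G$ is a direct product of simple groups). The inclusion $H_i\subseteq T_i$ is clear, since the $k_i$ coordinate copies of $S_i$ inside $H_i$ are minimal normal in $G$, isomorphic to $S_i$, and generate $H_i$. For $T_i\subseteq H_i$, let $M$ be any minimal normal subgroup of $G$ with $M\cong S_i$. Since $M$ is simple, each canonical projection $\pi_j\colon G\to S_j^{k_j}$ restricts on $M$ either to an injection or to the trivial map; and since $M\trianglelefteq G$, the image $\pi_j(M)$ is normal in $S_j^{k_j}$. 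A simple normal subgroup of a direct power $S_j^{k_j}$ of a finite simple group is necessarily isomorphic to $S_j$ (for non-abelian $S_j$ by the standard description of the normal subgroups of $S_j^{k_j}$ as products of coordinate copies; for $S_j\cong C_{p_j}$ trivially). Hence $\pi_j(M)\neq 1$ forces $S_i\cong S_j$, which by the non-isomorphism hypothesis happens only for $j=i$; thus $M\subseteq H_i$, so $T_i=H_i$. Finally, any automorphism of $G$ permutes the minimal normal subgroups of $G$ and preserves their isomorphism type, hence stabilises the generating set defining $T_i$; therefore $H_i$ is characteristic, and the lemma follows from the first fact.

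I expect the only real point of care to be the presence of abelian simple factors: if $S_i\cong C_{p_i}$, then $H_i\cong C_{p_i}^{k_i}$ carries many minimal normal subgroups besides the $k_i$ coordinate ones (all of its subgroups of order $p_i$), so one cannot argue that an automorphism merely permutes those $k_i$ copies. What keeps the argument uniform is using \emph{all} minimal normal subgroups of a fixed isomorphism type, together with the non-isomorphism hypothesis, which in particular prevents a $C_{p_i}$ from occurring as a normal subgroup of some $S_j^{k_j}$ with $j\neq i$ (the nontrivial normal subgroups of $S_j^{k_j}$ being products of copies of $S_j$, or, in the abelian case, living inside a $p_j$-group with $p_j\neq p_i$). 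With that handled, the decomposition of $\textup{Aut}(G)$ is immediate and plugs directly into the direct-product computations used earlier in this section.
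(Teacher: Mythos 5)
Your proof is correct and takes essentially the same route as the paper, which offers no argument beyond the remark that the lemma ``easily follows from the fact that the automorphism group of the direct product of characteristic subgroups is the direct product of the automorphism groups.'' You supply the one substantive step the paper leaves implicit --- that each homogeneous component $S_i^{k_i}$ is characteristic, proved via its intrinsic description as the subgroup generated by all minimal normal subgroups of isomorphism type $S_i$ --- and your treatment of the abelian factors is exactly the point that needed care.
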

Recall that the generalized Fitting subgroup $F^*(G)$ of a finite group $G$ is defined as $F^*(G) = F(G) E(G)$, where $F(G)$ is the Fitting subgroup (the maximal nilpotent normal subgroup) and $E(G)$ is the layer, i.e. the maximal semisimple normal subgroup. It is well known that $F^*(G)$ is self-centralizing and the elements of the layer and Fitting subgroup commute. For more in depth results, we refer to \cite[Section X.13]{Huppert}.
As an application of the main results of this section we give some partial answers to the following questions posed by M. Hertweck and W. Kimmerle in \cite{HertweckKimmerle}; where $p$ is a prime number and $G$ is a finite group.\\
\begin{enumerate}
\item Is $\textup{Out}_{col}(G)$ a $p'$-group if $G$ does not have $C_p$ as a chief factor?
\item Is $\textup{Out}_{col}(G) $ trivial if $O_{p'}(G)$ is trivial?
\item Is $\textup{Out}_{col}(G)$ trivial if $G$ has a unique minimal non-trivial normal subgroup?
\end{enumerate}
M. Hertweck and W. Kimmerle showed that all these questions have affirmative answers in case $G$ is $p$-constrained, for example if $G$ is solvable. Recall that every finite solvable group has abelian minimal non-trivial normal subgroups. Concerning the third question, as the minimal non-trivial normal subgroups are abelian, it follows that under the extra assumption that this minimal normal subgroup is abelian, a partial answer was given by M. Hertweck and W. Kimmerle. In the next theorem we show that if this minimal normal subgroup is non-abelian, the statement always has a positive answer.

\begin{Theorem}\label{nonabelianminimal} Let $G$ be a finite group with a unique minimal non-trivial normal subgroup $N$. If $N$ is non-abelian, then $\textup{Out}_{col}(G) = 1$.
\end{Theorem}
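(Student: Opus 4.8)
The plan is to reduce to the situation already handled by Theorem~\ref{selfcentralcharacteristic}. Since $G$ has a \emph{unique} minimal non-trivial normal subgroup $N$, and $N$ is non-abelian, $N$ is characteristically simple, hence $N = S_1 \times \cdots \times S_n$ with each $S_i \cong S$ a fixed non-abelian simple group. The key observation is that $C_G(N)$ is a normal subgroup of $G$: if it were non-trivial it would contain a minimal non-trivial normal subgroup of $G$, which by uniqueness must be $N$ itself; but then $N \subseteq C_G(N)$, forcing $N$ abelian, a contradiction. Therefore $C_G(N) = 1 \subseteq N$, so the pair $(G, G)$ (taking $K = G$ in the notation of Theorem~\ref{selfcentralcharacteristic}) satisfies the hypotheses of that theorem, provided we check that $N$ is actually a \emph{characteristic} subgroup of $G$ — which it is, since the unique minimal non-trivial normal subgroup of a group is invariant under every automorphism.

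First I would record that $N$ is characteristic in $G$: any $\alpha \in \textup{Aut}(G)$ permutes the minimal non-trivial normal subgroups of $G$, and there is only one, so $\alpha(N) = N$. Second, I would prove $C_G(N) = 1$ by the argument above: $C_G(N) \trianglelefteq G$ because $N \trianglelefteq G$; if $C_G(N) \neq 1$ it contains a minimal non-trivial normal subgroup of $G$, necessarily $N$, giving $N \subseteq Z(N)$ and contradicting non-abelianness of $N$. Third, apply Theorem~\ref{selfcentralcharacteristic} with $K = G$ and this characteristic subgroup $N$ with $C_G(N) \subseteq N$: every Coleman automorphism of $G$ is inner, i.e. $\textup{Out}_{col}(G) = 1$.

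I do not expect any genuine obstacle here; the statement is essentially the specialization of Theorem~\ref{selfcentralcharacteristic} to the case $K = G$ once one notices that uniqueness of the minimal normal subgroup forces self-centrality. The only mild subtlety — and the one point I would take care to state explicitly — is the passage from ``unique minimal \emph{normal} subgroup'' to ``minimal non-trivial \emph{characteristic} subgroup'': uniqueness is exactly what upgrades normality to characteristicity, and it is also what is used to identify the (a priori possibly larger) centralizer-contained normal subgroup with $N$. If one wanted, one could phrase the whole thing as a one-line corollary of Theorem~\ref{selfcentralcharacteristic}, but I would spell out the two verifications (characteristic, self-centralizing) since they are the content.
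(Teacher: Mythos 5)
Your argument is correct and is essentially the paper's own proof: the paper likewise notes that $C_G(N)=1$ by minimality and non-abelianness, observes that $N$ is then a self-centralizing minimal non-trivial characteristic subgroup, and applies Theorem~\ref{selfcentralcharacteristic} with $K=G$. Your version merely spells out the two verifications (characteristicity from uniqueness, and the centralizer argument) that the paper leaves implicit.
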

\begin{proof}
As $C_G(N)$ is a normal subgroup of $G$ and $N$ is non-abelian, the minimality assumption yields that $C_G(N) = 1$. Obviously, the assumptions yield that $N$ is a self-centralizing minimal non-trivial characteristic subgroup of $G$. Thus, the result follows from Theorem \ref{selfcentralcharacteristic}.
\end{proof}
Thus to answer question $3$, only groups with an abelian minimal non-trivial normal subgroup need to be checked. Moreover, these groups may be assumed to be not $p$-constrained. We will show now that a positive answer to the second question implies a positive answer to the third question.
\begin{Proposition}\label{question2impliesquestion3} If Question $2$ has a positive answer, then so does Question $3$. \end{Proposition}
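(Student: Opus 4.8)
The plan is to reduce the hypothesis of Question $3$ to that of Question $2$ by choosing the relevant prime correctly; once that is done, essentially no further work is needed. Throughout I assume, as the statement permits, that the positive answer to Question $2$ reads: for every prime $p$ and every finite group $G$ with $O_{p'}(G) = 1$, one has $\textup{Out}_{col}(G) = 1$.

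First I would record the elementary fact that, since $N$ is the \emph{unique} minimal non-trivial normal subgroup of $G$, every non-trivial normal subgroup of $G$ contains $N$: any non-trivial normal subgroup $M$ of $G$ contains some minimal non-trivial normal subgroup of $G$ (choose one minimal among the non-trivial normal subgroups of $G$ contained in $M$), and by uniqueness that subgroup must be $N$.

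Next I would pick any prime $p$ dividing $|N|$ and claim that $O_{p'}(G) = 1$. Indeed, $O_{p'}(G)$ is a normal subgroup of $G$; were it non-trivial, it would contain $N$ by the previous paragraph, and then $p$ would divide $|N|$ and hence $|O_{p'}(G)|$, contradicting that $O_{p'}(G)$ is a $p'$-group. I would note that this argument makes no distinction between $N$ abelian and $N$ non-abelian, so in the setting of Question $3$ it subsumes Theorem~\ref{nonabelianminimal}.

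Finally I would invoke the assumed positive answer to Question $2$ for this prime $p$: since $O_{p'}(G) = 1$, it yields $\textup{Out}_{col}(G) = 1$, which is precisely the conclusion of Question $3$. I do not expect any real obstacle; the only point worth isolating is the observation in the third paragraph, namely that uniqueness of the minimal non-trivial normal subgroup forces $O_{p'}(G)$ to be trivial as soon as $p$ is chosen to divide $|N|$.
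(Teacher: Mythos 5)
Your proof is correct and follows essentially the same route as the paper: pick a prime $p$ dividing $|N|$, use the uniqueness of the minimal non-trivial normal subgroup to conclude $O_{p'}(G)=1$, and invoke the assumed positive answer to Question $2$. The only (harmless) difference is that the paper first disposes of the non-abelian case via Theorem~\ref{nonabelianminimal} and then uses that an abelian minimal normal subgroup is an elementary abelian $p$-group, whereas your argument treats both cases uniformly, which is a mild streamlining rather than a different method.
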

\begin{proof}
By Theorem \ref{nonabelianminimal}, it is sufficient to consider finite groups $G$ with an abelian unique minimal non-trivial normal subgroup, say $N$. Then $N$ is a characteristically simple group. Hence, $N$ is an elementary abelian $p$-group for some prime $p$. Thus $N \subseteq O_p(G)$. As $N$ is the unique minimal normal subgroup, it follows that $O_{p'}(G) = 1$. So,the result follows.
\end{proof}
Denote by $O_p(G)$ the maximal normal $p$-subgroup of a group $G$. The following theorem serves to provide a partial answer to question $3$ under the additional assumption that $O_p(G) = 1$.
\begin{Theorem} \label{outcol2} Let $K$ be a finite group and $p$ be an odd prime number. Suppose $K$ has a normal subgroup $E$, which is the direct product of non-abelian simple groups, such that $C_K(E) \subseteq E$. If $p$ divides the order of every simple group in the decomposition of $E$, then $\textup{Out}_{col}(K)$ is a $p'$-group. \end{Theorem}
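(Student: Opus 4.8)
The plan is to reduce, via the standard $p'$-modification trick used throughout this section, to the case of a Coleman automorphism $\sigma$ of $K$ of $p$-power order, and then show such a $\sigma$ must be inner. Since $E = \prod_{i=1}^n S_i$ with each $S_i$ non-abelian simple and $p \mid |S_i|$, I would first apply Theorem~\ref{Grosspcentralallp}: because $p$ is odd and $p$ divides each $|S_i|$, every $p$-central automorphism of $S_i$ of $p$-power order is inner. This is the key structural input that the earlier Theorem~\ref{selfcentralcharacteristic} did not get to exploit (there one only had \emph{some} prime $q$ from Theorem~\ref{Simplegroupspcentral}, not a prescribed one). So here I can work directly with the prime $p$ itself.

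Concretely: let $\sigma \in \textup{Aut}_{col}(K)$ have $p$-power order. Pick a Sylow $p$-subgroup $P$ of $K$ containing $\prod_{i=1}^n P_i$ where $P_i$ is a Sylow $p$-subgroup of $S_i$. Since $\sigma|_P = \textup{conj}(h)|_P$ for some $h \in K$, replace $\sigma$ by $\sigma\circ\textup{conj}(h^{-1})$ and then by a suitable $p$-power of it (killing the $p'$-part of the order) to assume $\sigma|_P = \textup{id}_P$ while keeping $\sigma$ of $p$-power order; as before this changes $\sigma$ only by an inner automorphism and a $p'$-power, so it suffices to prove this modified $\sigma$ is inner. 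Now $\sigma|_E \in \textup{Aut}(E)$ by Lemma~\ref{colemannormalinvariant}, and the argument from Theorem~\ref{selfcentralcharacteristic} (using that $S_i$ is the normal closure of $P_i$ in $S_i$ and that $\sigma(x)\in E$) shows $\sigma(S_i) = S_i$ for each $i$, so $\sigma|_{S_i} \in \textup{Aut}(S_i)$ is $p$-central of $p$-power order, hence inner by Theorem~\ref{Grosspcentralallp}: $\sigma|_{S_i} = \textup{conj}(g_i)|_{S_i}$ with $g_i \in S_i$. Thus $\sigma|_E = \textup{conj}(\prod_i g_i)|_E$. Modify $\sigma$ by $\textup{conj}((\prod_i g_i)^{-1})$ and take a $p$-power again to reduce to $\sigma|_E = \textup{id}_E$ (at the cost of $\sigma$ perhaps no longer being $p$-central, but still of $p$-power order and differing from the original by an inner automorphism and a $p'$-power). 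Finally, since $E$ is characteristic in $K$ (it is the layer $E(K)$, or at any rate $\sigma$ normalizes it as a normal subgroup fixed setwise), for all $x \in K$, $n \in E$ we get $x^{-1}nx = \sigma(x^{-1}nx) = \sigma(x)^{-1} n \sigma(x)$, so $\sigma(x)x^{-1} \in C_K(E) \subseteq E$; but $E$ is a direct product of non-abelian simple groups, so $Z(E) = 1$, and the relation $\sigma(x)x^{-1} \in C_K(E) = Z(E) = 1$ forces $\sigma = \textup{id}_K$. Hence every Coleman automorphism of $K$ of $p$-power order is inner, so $\textup{Out}_{col}(K)$ is a $p'$-group.

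The main obstacle is making the "setwise fixing of $E$" and the reduction to $\sigma|_E = \textup{id}_E$ fully rigorous: one must be careful that $\sigma$ permutes the simple direct factors $S_i$ (a priori a Coleman automorphism could move the factors), and argue that after the first round of modifications $\sigma$ fixes each $S_i$ setwise. The cleanest way is to observe, as in Theorem~\ref{selfcentralcharacteristic}, that $\sigma|_{P_i} = \textup{id}_{P_i}$ together with $\sigma(S_i) \subseteq E = \prod_j S_j$ and the simplicity of $S_i$ pins $\sigma(S_i)$ down to a single factor, and since $\sigma$ fixes the nontrivial $p$-subgroup $P_i \le S_i$ pointwise it cannot send $S_i$ to a different $S_j$ — hence $\sigma(S_i) = S_i$. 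After that the argument is a routine reprise of the non-abelian case of Theorem~\ref{selfcentralcharacteristic}; I would phrase the bulk of the proof as "arguing exactly as in the proof of Theorem~\ref{selfcentralcharacteristic}, now with $q = p$ by Theorem~\ref{Grosspcentralallp}" to avoid repetition.
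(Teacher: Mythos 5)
Your proposal is correct and follows essentially the same route as the paper's proof: reduce to a $p$-power order Coleman automorphism that is the identity on a Sylow $p$-subgroup containing $\prod_i Q_i$, use Lemma~\ref{colemannormalinvariant} and the normal-closure argument to see that each simple factor is $\sigma$-invariant, apply Theorem~\ref{Grosspcentralallp} factorwise, and finish with $\sigma(x)x^{-1} \in C_K(E) \subseteq Z(E) = 1$. The only cosmetic difference is that the paper groups isomorphic factors as $S_1^{k_1}\times\cdots\times S_n^{k_n}$ before restricting to individual copies, which changes nothing in substance.
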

\begin{proof}
Let $\sigma$ be a Coleman automorphism of $p$-power order. Put $E = S_1^{k_1} \times ... \times S_n^{k_n}$ with $S_i \not\cong S_j$ if $i \neq j$. By Lemma \ref{colemannormalinvariant}, $ \sigma|_E \in \textup{Aut}(E)$. Let, for any $1 \leq i \leq n$, $Q_i$ be a Sylow $p$-subgroup of $S_i$. Then $Q=Q_1^{k_1} \times ... \times Q_n^{k_n}$ is a $p$-subgroup of $E$. By the same reasoning as in the proof of Theorem \ref{selfcentralcharacteristic}, we may assume, without loss of generality, that $\sigma|_Q = \textup{id}|_Q$. Each $S_i^{k_i}$ is a characteristic subgroup in $S_1^{k_1}\times ... \times S_n^{k_n}$, thus $\sigma|_{S_i^{k_i}} \in \textup{Aut}(S_i^{k_i})$. Let us identify the $l$'th copy of $S_i$ (denoted $S_i^{(l)}$) in $S_i^{k_i}$ with $S_i$. Then for any $x \in S_i$ and for any $q \in Q_i$, it follows that $$ \sigma(x^{-1}q x) = \sigma(x)^{-1} q \sigma(x).$$
Because of the assumption, each $Q_i$ is non-trivial and, clearly, the normal subgroup generated by $Q_i$ (in $E$) is $S_i$. As $\sigma(x) \in E$, this shows that $\sigma|_{S_i^{(l)}} \in \textup{Aut}(S_i^{(l)})$. Since $\sigma|_{S_i^{(l)}}$ is $p$-central and $p$ is odd, it follows by  Theorem \ref{Grosspcentralallp} that there exists $x_i^{(l)} \in S_i^{(l)}$ such that $$ \sigma|_{S_i^{(l)}} = \textup{conj}(x_i^{(l)}).$$ As this holds for any $l\leq k_i$ and any $i \leq n$, there exists $x \in E$ such that $$\sigma|_E = \textup{conj}(x).$$
Hence, as in previous proofs, modification by an inner automorphism and taking a suitable power, we may assume, without loss of generality, that $\sigma|_E = \textup{id}_E$, but here we possibly sacrifice $p$-centrality. Thus, for any $g \in E$ and $x \in K$, it follows that $$ x^{-1} gx = \sigma(x^{-1}gx) = \sigma(x)^{-1} g \sigma(x).$$
This shows, for any $ x \in K$, that $\sigma(x)x^{-1} \in C_K(E) \subseteq E.$ So $\sigma(x)x^{-1} = e$ and thus $\sigma = \textup{id}_K$.
\end{proof}
\begin{Theorem} \label{outcolspecial} Let $G$ be a finite group and $p$ an odd prime number. If $O_p(G) = O_{p'}(G)= 1$ and $p$ divides the order of every simple subgroup of $E$, then $\textup{Out}_{col}(G)$ is a $p'$-group. \end{Theorem}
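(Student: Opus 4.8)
The plan is to reduce everything to Theorem~\ref{outcol2} applied with $K = G$ and $E = E(G)$, by showing that under the hypotheses the layer $E(G)$ is a self-centralizing direct product of non-abelian simple groups each of order divisible by $p$.

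First I would observe that $F(G) = 1$. Indeed, the Fitting subgroup is nilpotent, hence the internal direct product of the subgroups $O_q(G)$ as $q$ ranges over the primes dividing $|G|$; each such factor is either $O_p(G) = 1$ or is a normal $q$-subgroup with $q \neq p$, hence contained in $O_{p'}(G) = 1$. Therefore $F^*(G) = F(G)E(G) = E$, and the standard self-centralizing property of the generalized Fitting subgroup (cf.\ \cite{Huppert} and the discussion preceding Theorem~\ref{nonabelianminimal}) gives $C_G(E) = C_G(F^*(G)) \subseteq F^*(G) = E$. In particular, if $G \neq 1$ then $E \neq 1$, and if $G = 1$ the statement is trivial.

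Next I would show $Z(E) = 1$: since $Z(E)$ is an abelian, hence nilpotent, normal subgroup of $G$, it lies in $F(G) = 1$. A layer with trivial centre is the direct product of its components, and each component is a quasisimple subgroup whose centre is contained in $Z(E) = 1$; so $E = S_1 \times \cdots \times S_m$ with every $S_i$ a non-abelian simple group. Each $S_i$ is in particular a simple subgroup of $E$, so the hypothesis that $p$ divides the order of every simple subgroup of $E$ forces $p \mid |S_i|$ for all $i$.

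With this, $K = G$ has the normal subgroup $E = S_1 \times \cdots \times S_m$, a direct product of non-abelian simple groups, with $C_G(E) \subseteq E$ and the odd prime $p$ dividing each $|S_i|$; Theorem~\ref{outcol2} then yields that $\textup{Out}_{col}(G)$ is a $p'$-group. I do not anticipate a real obstacle: the argument is essentially bookkeeping with the generalized Fitting subgroup, the only points needing care being the invocation of the facts $F(G) = \prod_q O_q(G)$ and $C_G(F^*(G)) \subseteq F^*(G)$, and the trivial edge case $E = 1$ (which, by self-centrality, forces $G = 1$).
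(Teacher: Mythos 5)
Your proposal is correct and follows exactly the paper's own argument: deduce $F(G)=1$ from $O_p(G)=O_{p'}(G)=1$, conclude $F^*(G)=E(G)$ is self-centralizing with trivial centre and hence a direct product of non-abelian simple groups of order divisible by $p$, and apply Theorem~\ref{outcol2}. The only difference is that you spell out the routine facts (the decomposition $F(G)=\prod_q O_q(G)$ and the edge case $E=1$) that the paper dismisses with ``clearly.''
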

\begin{proof}
Clearly, the assumptions implies that $F(G) = 1$. Hence, $F^*(G) = E(G)$. This shows that $C_G(E(G)) \subseteq E(G)$. Moreover, as $Z(E(G)) \subseteq F(G) = 1$, it follows that $E(G)$ is the direct product of non-abelian simple groups. The result then follows from Theorem \ref{outcol2}.
\end{proof}

\section{Coleman automorphisms of Nilpotent-by-cyclic groups}
In this section we obtain a characterization of the Coleman automorphisms of finite nilpotent-by-($p$-power cyclic) groups. Apart from giving concrete examples of groups with known non-trivial outer Coleman automorphisms, we obtain an easy second proof of E.C. Dade's result \cite{DadeLocallytrivial, DadeCorrection}, which shows that any finite abelian group can be realized as the group of outer Coleman automorphisms of some finite metabelian group. Furthermore, our results show that the results of the previous section are not to be expected for semi-direct products of a nilpotent group and an arbitrary finite group (with faithful action). To give some more insight in the matter, as the proof is quite technical, we first show the result for a semi-direct product of a finite abelian group by a cyclic group of prime power order. Our work is inspired by recent work of Z. Li and Y. Li \cite{generalizeddihedral}. They studied the group of Coleman automorphisms of a finite generalized dihedral group and obtained a concrete characterization of this group. First, we define the automorphisms, which will induce the only outer Coleman automorphisms.
\begin{Definition}
Let $ A * C_q$ be the free product of a finite abelian group $A$ and a cyclic group $C_q = \left< x \right>$ of order $q=p^r$, with $p$ a prime number. Write $A = A_{p_1} \times ... \times A_{p_n}$, its Sylow decomposition. Then, for any $1\leq i \leq n$ and non-negative integers $k_i$, define the group homomorphisms \begin{align*} \phi_{k_1...k_n}: A * C_q &\rightarrow A * C_q \end{align*} where, for  $g_i \in A_{p_i}$,,  \begin{align*} g_i \mapsto x^{-k_i} g_i x^{k_i} &\textnormal{ and }x \mapsto x
\end{align*}
\end{Definition}

It easily is verified that these maps induce Coleman automorphisms $\Phi_{k_1 ... k_{n-1}k_n}$ on all possible semi-direct products.

\begin{Theorem}\label{Theoremoutcol}
Let $G = A \rtimes C_q$, with notation as in the definition above. For any $1 \leq i \leq n$, let $r_i$ be the smallest positive integer such that $\textup{conj}(x^{r_i})|_{A_{p_i}} = \textup{id}|_{A_{p_i}}$. Assume that the Sylow subgroups are ordered such that $r_1 \leq ... \leq r_n$.  Then, $$ \textup{Aut}_{col}(G) = \textup{Inn}(G) \rtimes K, $$ with $$ K = \left\lbrace \Phi_{k_1 ... k_{n-1}0} \mid k_i \in \mathbb{Z}_{r_i} \right\rbrace.$$ Thus, $$ K \cong C_{r_1} \times ... \times C_{r_{n-1}}.$$
\end{Theorem}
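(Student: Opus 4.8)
The plan is to establish the two facts $\textup{Aut}_{col}(G)\subseteq\textup{Inn}(G)K$ and $\textup{Inn}(G)\cap K=1$; since $\textup{Inn}(G)$ is normal in $\textup{Aut}(G)$ this gives $\textup{Aut}_{col}(G)=\textup{Inn}(G)\rtimes K$, and the isomorphism $K\cong C_{r_1}\times\cdots\times C_{r_{n-1}}$ then follows from the identity $\Phi_{k_1\cdots k_n}\circ\Phi_{l_1\cdots l_n}=\Phi_{k_1+l_1,\ldots,k_n+l_n}$ together with the fact that $\Phi_{k_1\cdots k_n}=\textup{id}$ precisely when $r_i\mid k_i$ for every $i$. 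Write $\tau=\textup{conj}(x)|_A\in\textup{Aut}(A)$ (and $A$ additively, so that $1-\tau$ makes sense); since $x^q=e$, each $r_i$ is a power of $p$ dividing $q$, so $r_i\le r_j$ implies $r_i\mid r_j$. I would first record three routine facts: (i) every $\Phi_{k_1\cdots k_n}$ is a Coleman automorphism, because on the normal Sylow $\ell$-subgroup $A_\ell$ with $\ell\ne p$ it agrees with $\textup{conj}(x^{k})$ for the relevant exponent, while on a Sylow $p$-subgroup it agrees with conjugation by a group element (when $p\mid|A|$ one may use the Sylow subgroup $A_p\rtimes C_q$, on which $\Phi_{k_1\cdots k_n}=\textup{conj}(x^{k})$); (ii) $\Phi_{m,m,\ldots,m}=\textup{conj}(x^m)$, hence $\Phi_{k_1\cdots k_n}=\Phi_{k_1-k_n,\ldots,k_{n-1}-k_n,0}\circ\textup{conj}(x^{k_n})\in K\cdot\textup{Inn}(G)$; and (iii) if $\Phi_{k_1\cdots k_{n-1}0}=\textup{conj}(g)$ with $g=ax^m$, then restriction to $A_{p_n}$ forces $r_n\mid m$, whence $r_i\mid m$ for all $i$, and restriction to the remaining $A_{p_i}$ forces $r_i\mid k_i$, so that $\textup{Inn}(G)\cap K=1$.

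The heart of the proof is to show an arbitrary $\sigma\in\textup{Aut}_{col}(G)$ lies in $\textup{Inn}(G)K$. By Lemma \ref{colemannormalinvariant}, $\sigma$ stabilises $A$ and hence each characteristic subgroup $A_{p_i}$; applying the Coleman condition at the prime $p_i$ (at $p$ through the Sylow subgroup $A_p\rtimes C_q$) and using that $A$ is abelian, one gets integers $j_i$ with $\sigma|_{A_{p_i}}=\textup{conj}(x^{j_i})|_{A_{p_i}}$, that is, $\sigma|_A=\Phi_{j_1\cdots j_n}|_A$. Replacing $\sigma$ by $\sigma\circ\Phi_{j_1\cdots j_n}^{-1}$, which is again Coleman and, by (ii), stays in the coset $\sigma\,\textup{Inn}(G)K$, we may assume $\sigma|_A=\textup{id}_A$. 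Then $\sigma$ induces an automorphism of $G/A\cong C_q$; since the image of any Sylow $p$-subgroup surjects onto $G/A$ and $G/A$ is abelian, the Coleman condition at $p$ forces this induced automorphism to be trivial. Hence $g\mapsto g^{-1}\sigma(g)$ is a $1$-cocycle which factors through $C_q$; splitting $A=A_p\times A_{p'}$ as $C_q$-modules and using $H^1(C_q,A_{p'})=0$ (coprime orders), I can compose $\sigma$ with conjugation by a suitable element of $A_{p'}$, again keeping $\sigma|_A=\textup{id}_A$ and fixing the coset, to arrange that this cocycle takes values in $A_p$; in particular $\sigma(x)=xa_0$ for some $a_0\in A_p$.

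It remains to show that such a $\sigma$ is inner. Using the Coleman condition at $p$ once more, there is $g=bx^m$ with $\sigma|_{A_p\rtimes C_q}=\textup{conj}(g)|_{A_p\rtimes C_q}$; expanding $g^{-1}xg=xa_0$ with the help of $\textup{conj}(x^m)$ gives $a_0=\tau^m\big((1-\tau)(b)\big)$, and comparing $A_p$- and $A_{p'}$-components (both $\tau$-invariant) yields $a_0=(1-\tau)(b')$ with $b'=\tau^m(b_p)\in A_p$. A direct computation then shows that $\textup{conj}(b')$ fixes $A$ pointwise and sends $x$ to $xa_0=\sigma(x)$, so $\sigma=\textup{conj}(b')\in\textup{Inn}(G)$. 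Unwinding the two modifications places the original $\sigma$ in $\textup{Inn}(G)K$, which finishes the proof.

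The decisive step, and the one where the hypotheses are genuinely used, is the passage from ``$\sigma$ is Coleman'' to ``$a_0\in(1-\tau)(A_p)$'': the Coleman condition evaluated on the specific Sylow subgroup $A_p\rtimes C_q$ is exactly what rules out the spurious automorphisms coming from $1$-cocycles $C_q\to A_p$ of vanishing norm but non-trivial cohomology class, and it is crucial that $q$ is a prime power, so that $C_q$ and every $r_i$ are $p$-groups; this is also what makes fact (iii), the decomposition $A=A_p\times A_{p'}$, and the component comparison in the last paragraph work. Checking that each reduction step preserves both membership in $\textup{Aut}_{col}(G)$ and the coset modulo $\textup{Inn}(G)K$ is routine but must be done carefully.
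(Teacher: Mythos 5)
Your argument is correct, and its skeleton --- exhibit $K\le\textup{Aut}_{col}(G)$, prove $\textup{Inn}(G)\cap K=1$, prove $\textup{Aut}_{col}(G)=\textup{Inn}(G)K$, and read off $K\cong C_{r_1}\times\cdots\times C_{r_{n-1}}$ from $\Phi_{k_1\cdots k_n}\circ\Phi_{l_1\cdots l_n}=\Phi_{k_1+l_1,\ldots,k_n+l_n}$ --- coincides with the paper's. Where you genuinely diverge is in the containment $\textup{Aut}_{col}(G)\subseteq\textup{Inn}(G)K$: the paper normalizes in the opposite order. It first applies the Coleman condition at $p$ to a Sylow $p$-subgroup containing $x$ and composes with $\textup{conj}(a^{-1})$ for a suitable $a\in A$ to arrange $\widetilde{\sigma}(x)=x$; after that, the Coleman condition at each $p_i$ together with commutativity of $A$ gives $\widetilde{\sigma}|_{A_{p_i}}=\textup{conj}(x^{j_i})|_{A_{p_i}}$ outright, so $\widetilde{\sigma}=\Phi_{j_1\ldots j_n}$ with no residual twist, and composing with $\textup{conj}(x^{-j_n})$ lands in $K$; no cohomology is needed. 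You instead normalize $\sigma|_A=\textup{id}_A$ first, which leaves the twist $\sigma(x)=xa_0$, and you then need two ingredients the paper never touches: the vanishing $H^1(C_q,A_{p'})=0$ to push $a_0$ into $A_p$, and the identity $a_0=\tau^m\bigl((1-\tau)(b)\bigr)$ extracted from the Coleman condition on the Sylow subgroup $A_p\rtimes\langle x\rangle$ to see that $a_0\in(1-\tau)(A_p)$, i.e.\ that the twist is a coboundary. Both proofs consume the same decisive input --- the Coleman condition on the Sylow $p$-subgroup containing $x$ --- but the paper's ordering makes the second stage vacuous, while yours isolates the cohomological obstruction explicitly: automorphisms trivial on $A$ and on $G/A$, modulo inner ones, live in a subquotient of $H^1(C_q,A)$, and the Coleman condition kills the $A_p$-part of it. That viewpoint is conceptually illuminating and connects well with Dade's realization result quoted later in this section, but it is strictly longer. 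If you keep your route, do record explicitly that verifying the Coleman condition on the single Sylow $p$-subgroup $A_p\rtimes\langle x\rangle$ suffices in your fact (i), since $\sigma|_{hPh^{-1}}=\textup{conj}\bigl(hg\sigma(h)^{-1}\bigr)|_{hPh^{-1}}$ whenever $\sigma|_P=\textup{conj}(g)|_P$.
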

\begin{proof}
Clearly, $K$ is a subgroup of $\textup{Aut}_{col}(G)$. First, we study when the automorphisms $\Phi_{k_1 ... k_{n}}$ and $\Phi_{j_1...j_n}$ determine the same outer automorphism of $G$. For this, let $j_i,k_i \in \mathbb{Z}_{r_i}$ and $g \in G$ be such that $$\Phi_{j_1...j_n} = \Phi_{k_1...k_n} \circ \textup{conj}(g).$$
Write $g = bx^l$, with $b \in A$ and $l$ a non-negative number. Then, for any $1 \leq i \leq n$ and $a_i \in A_{p_i}$, $$ x^{-j_i} a_i x^{j_i} = x^{-k_i - l}b^{-1} a_i b x^{l+k_i},$$
Equivalently, $$ x^{k_i - j_i +l} a_i x^{-k_i+j_i-l} = a_i.$$
This shows that $x^{k_i-j_i+l} \in C_{\left<x \right>}(A_i)$ and thus $k_i - j_i + l \equiv 0 \mod r_i$. Furthermore, as $\Phi_{j_1...j_n}|_{\left< x\right>} = \Phi_{k_1 ... k_n}|_{\left< x \right>} = \textup{id}_{\left< x \right>}$,   $$g^{-1}xg = x.$$ Thus, $b \in C_A(\left<x\right>) = Z(G) \cap A$. These calculations show that $\Phi_{j_1...j_n}$ and $\Phi_{k_1...k_n}$ determine the same outer automorphism of $G$ if and only if the differences $k_i - l_i$ are constant modulo $r_i$. As $r_i \mid r_n$ for all $ 1 \leq i \leq n$, because $r_i \leq r_n$ and the $r_i$'s are $p$-powers, it follows that the elements in $K$ induce different elements of $\textup{Out}_{col}(G)$.

It remains to show that every Coleman automorphism induces the same outer automorphism as some element in $K$. Let $\sigma \in \textup{Aut}_{col}(G)$. As $\left< x \right>$ is a $p$-group, there exists an $a \in A$ such that $$ \sigma|_{\left< x \right>} = \textup{conj}(a)|_{\left< x \right>}.$$
Define the automorphism $\widetilde{\sigma}= \sigma \circ \textup{conj}(a^{-1})$. Clearly, $\widetilde{\sigma}$ is a Coleman automorphism of $G$. Moreover, for every $1 \leq i \leq n$, there exists a positive integer $j_i$ such that \begin{align*} \widetilde{\sigma}|_{A_{p_i}} &= \textup{conj}(x^{j_i})|_{A_{p_i}} \end{align*} Hence, $\widetilde{\sigma} = \Phi_{j_1...j_n}$ and $ \tilde{\sigma} \circ \textup{conj}(x^{-j_n}) = \Phi_{k_1,...,k_{n-1},0} \in K$ for some non-negative integers $k_1,...,k_{n-1}$. As $\sigma$ and $\Phi_{k_1,...,k_{n-1},0} $ induce the same element in $\textup{Out}_{col}(G)$, it follows that, $$\textup{Aut}_{col}(G) = \textup{Inn}(G) \rtimes K. \qedhere$$
\end{proof}

An application we can now give an alternative proof of E.C. Dade's result in \cite{DadeLocallytrivial}. The proof makes use of the well known fact that if $p$ is a prime number and $r$ is a non-negative number. Then there exist infinitely many prime numbers $q$ such that there is a primitive $p^r$-th root of unity in $\mathbb{Z}_q$.


\begin{Theorem}
If $H$ is a finite abelian group, then there exists a semi-direct product $A \rtimes B$ of finite finite abelian groups $A$ and $B$ such that $\textup{Out}_{col}(A \rtimes B) \cong H$.
\end{Theorem}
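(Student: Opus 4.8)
The plan is to reduce to the case where $H$ is a $p$-group and then apply Theorem~\ref{Theoremoutcol}, the only external ingredient being the stated fact about primes in arithmetic progressions. For the reduction, write $H \cong H_1 \times \cdots \times H_t$ for its decomposition into primary components, where $H_j$ is a non-trivial abelian $q_j$-group and $q_1, \dots, q_t$ are pairwise distinct primes (if $H = 1$, take the trivial group). Suppose for each $j$ one has a semi-direct product $A_j \rtimes B_j$ of finite abelian groups with $\textup{Out}_{col}(A_j \rtimes B_j) \cong H_j$. Put $A = A_1 \times \cdots \times A_t$ and $B = B_1 \times \cdots \times B_t$, and let $(b_1, \dots, b_t) \in B$ act on $(a_1, \dots, a_t) \in A$ coordinatewise, $b_j$ acting on $a_j$ as in $A_j \rtimes B_j$. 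Then $A$ and $B$ are finite abelian and $A \rtimes B \cong \prod_{j=1}^t (A_j \rtimes B_j)$, so Proposition~\ref{directprodcoleman} gives $\textup{Out}_{col}(A \rtimes B) \cong \prod_{j=1}^t \textup{Out}_{col}(A_j \rtimes B_j) \cong H$. Hence it suffices to realize an arbitrary non-trivial finite abelian $p$-group.

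For the $p$-group case, write $H \cong C_{p^{a_1}} \times \cdots \times C_{p^{a_m}}$ with $1 \le a_1 \le \cdots \le a_m$ and set $q = p^{a_m}$. Using that for every $k \ge 0$ there are infinitely many primes $\ell$ with $p^k \mid \ell - 1$, choose pairwise distinct primes $p_1, \dots, p_{m+1}$, all different from $p$, with $p^{a_i} \mid p_i - 1$ for $1 \le i \le m$ and $p^{a_m} \mid p_{m+1} - 1$. Since $\textup{Aut}(C_{p_i}) \cong C_{p_i - 1}$ is cyclic, it contains an element of order exactly $p^{a_i}$ (resp.\ exactly $p^{a_m}$ for $i = m+1$), and $p^{a_i} \mid q$, so we may let $C_q = \langle x \rangle$ act on the factor $C_{p_i}$ of $A = C_{p_1} \times \cdots \times C_{p_{m+1}}$ through such an automorphism. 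This defines $G = A \rtimes C_q$, with Sylow decomposition $A = A_{p_1} \times \cdots \times A_{p_{m+1}}$, $A_{p_i} = C_{p_i}$. By construction, the smallest positive integer $r_i$ with $\textup{conj}(x^{r_i})|_{A_{p_i}} = \textup{id}|_{A_{p_i}}$ equals $p^{a_i}$ for $i \le m$ and $p^{a_m}$ for $i = m+1$, so $r_1 \le \cdots \le r_{m+1}$ with $r_m = r_{m+1} = p^{a_m}$. Theorem~\ref{Theoremoutcol} then yields $\textup{Out}_{col}(G) \cong C_{r_1} \times \cdots \times C_{r_m} = C_{p^{a_1}} \times \cdots \times C_{p^{a_m}} \cong H$.

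I do not expect a genuinely hard step; the care needed is entirely in the bookkeeping for Theorem~\ref{Theoremoutcol}. One must take the action on each $C_{p_i}$ through an automorphism of order \emph{exactly} $p^{a_i}$, so that $r_i$ is the prescribed power rather than a proper divisor of it, and one must include the $(m+1)$-th factor because Theorem~\ref{Theoremoutcol} discards the largest $r_i$: choosing the action on $C_{p_{m+1}}$ faithful of order $p^{a_m}$ forces $r_{m+1}$ to be (tied for) largest while contributing no cyclic factor to $\textup{Out}_{col}(G)$. The sole external input is the density of primes in the progressions $1 + p^k\mathbb{Z}$, which is exactly the fact quoted before the theorem.
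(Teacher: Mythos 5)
Your proposal is correct and follows essentially the same route as the paper: reduce to the primary components via Proposition~\ref{directprodcoleman}, then realize each abelian $p$-group by letting a cyclic $p$-group act on a product of $m+1$ cyclic groups of distinct primes $\equiv 1 \pmod{p^{a_i}}$ through automorphisms of order exactly $p^{a_i}$, with the extra $(m+1)$-th factor absorbing the discarded largest $r_i$ in Theorem~\ref{Theoremoutcol}. The paper phrases the faithful actions in terms of primitive $p^{a_i}$-th roots of unity in $\mathbb{Z}_{p_i}$ and allows any exponent $s \geq a_m$ for the acting cyclic group, but these are only cosmetic differences.
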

\begin{proof}
First, we deal with the case that $H = \Pi_{i=1}^n C_{p^{r_i}}$, for some prime number $p$ and positive integers $r_1 \leq ... \leq r_n$. Define $r_{n+1} = r_n$ and let $s \geq r_n$ be a non-negative integer. By the above remark, there exist different prime numbers $q_1,...,q_n,q_{n+1}$ and $k_1,...,k_{n+1}$ positive integers, whose images in $\mathbb{Z}_{q_i}$ is a primitive $p^{r_i}$-th root of unity. Write $C_{p^{s}} = \left<x \right>$ and $C_{q_i} = \left< y_i \right>$. Define the semi-direct product $\left( \Pi_{i=1}^{n+1} C_{q_i} \right) \rtimes C_{p^s}$ by $$x^{-1} y_ix = y_i^{k_i}$$
By Theorem \ref{Theoremoutcol}, $\textup{Out}_{col}\left(\left( \Pi_{i=1}^{n+1} C_{q_i} \right) \rtimes C_{p^s}\right) \cong H$.

Second, let $H$ be any finite abelian group. Write $H = H_{p_1} \times ... \times H_{p_n}$, its decomposition into Sylow subgroups. Then there exist abelian groups $A_1,...,A_n$ and positive integers $s_1,...,s_n$ such that $\textup{Out}_{col}(A_i \rtimes C_{p_i^{s_i}}) \cong H_{p_i}$ Denote $G_i = A_i \rtimes C_{p_i^{s_i}}$. Define the group $G=\Pi_{i=1}^{n} G_i$. Clearly, $G \cong \left(A_1 \times ... \times A_{n} \right) \rtimes \left( C_{p^{s_1}} \times ... \times C_{p^{s_n}} \right)$, where the elements of $C_{p_i^{s_i}}$ act trivially on all $A_j$ except on $A_i$, where it acts like in $G_i$. Then, by Theorem \ref{directprodcoleman}, $\textup{Out}_{col}(G) \cong H$.
\end{proof}
We will now tackle the main theorem of this section. As this proof is quite technical, crucial steps are treated in lemmas.

\begin{Lemma}\label{speciallemma} Let $p$ be a prime number. Let $N$ be a normal subgroup of a finite group $G$. Assume $N$ is nilpotent and $G/N = \left< x N \right> \cong C_{p^n}$ for some positive integer $n$ and $p$-element $x$. Suppose $P$ is a Sylow $q$-subgroup of $N$ and suppose $\sigma \in \textup{Aut}_{col}(G)$ is such that $\sigma|_{\left< x \right> } = \textup{id}_{\left< x \right>}$. If $w \in N$ is a $q$-element and $j$ is a positive integer such that $\sigma|_P = \textup{conj}(wx^j)$, then $w$ and $x^{p^n}$ commute. Furthermore, for any integer $k$, $\left\lbrack x^k, w^{-1} \right\rbrack \in C_{G}(N)$. \end{Lemma}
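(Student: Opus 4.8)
We are given $\sigma \in \textup{Aut}_{col}(G)$ with $\sigma|_{\langle x \rangle} = \textup{id}$, a Sylow $q$-subgroup $P$ of the nilpotent normal subgroup $N$, a $q$-element $w \in N$ and an integer $j$ with $\sigma|_P = \textup{conj}(wx^j)$. The plan is to exploit that $x^{p^n} \in N$ (since $G/N \cong C_{p^n}$ is generated by $xN$), together with the fact that $\sigma$ fixes $x$ and restricts to an automorphism of $N$ by Lemma~\ref{colemannormalinvariant}, to pin down how $wx^j$ conjugates various elements. First I would record that, since $N$ is nilpotent, $P$ is the unique Sylow $q$-subgroup and is characteristic in $N$, hence normal in $G$; in particular $x$ normalizes $P$ and $\textup{conj}(x^j)$ is an honest automorphism of $P$. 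Also, because $\sigma$ fixes $\langle x \rangle$ elementwise, applying $\sigma$ to the relation $x^{-1}\,u\,x$ for $u \in P$ and comparing with $\sigma(u) = (wx^j)^{-1} u (wx^j)$ will force compatibility conditions.

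**The $q = p$ versus $q \neq p$ split and the key computation.**

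The cleanest route is to apply $\sigma$ to the element $x^{p^n}$, which lies in $N$. If $q = p$, then the $p$-part of $x^{p^n}$ is a power of $x^{p^n}$ lying in $P$ up to the rest of the Sylow decomposition of $x^{p^n}$ in $N$; more carefully, write $x^{p^n}$ according to the Sylow decomposition of the nilpotent group $N$ and observe that $\sigma$ fixes $x^{p^n}$ (as $x^{p^n} \in \langle x\rangle$). On the other hand $\sigma$ acts on the Sylow $q$-part as $\textup{conj}(wx^j)$. Comparing the two descriptions of $\sigma(x^{p^n})$ — one saying it is unchanged, the other saying its $q$-component is conjugated by $wx^j$ — forces the $q$-component of $x^{p^n}$ to commute with $wx^j$, and in particular (projecting appropriately, or when $q\neq p$ so that the $q$-component of $x^{p^n}$ is trivial and one argues instead via the action on $P$) one extracts that $w$ commutes with $x^{p^n}$. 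I expect the main obstacle to be the bookkeeping here: one must carefully separate the $p$-part of $x^{p^n}$ from its $q'$-parts inside $N$, and handle the two cases $q=p$ and $q\neq p$ with slightly different arguments, because when $q=p$ the element $x^{p^n}$ itself interacts with $P$, whereas when $q\neq p$ it centralizes $P$ and the constraint comes purely from $\sigma$ fixing $x$ while moving $P$-elements. Concretely, for $u \in P$ one has $\sigma(x^{-p^n} u x^{p^n}) = x^{-p^n}\sigma(u)x^{p^n}$ since $x^{p^n}\in\langle x\rangle$ is fixed; but the left side also equals $(wx^j)^{-1}(x^{-p^n}ux^{p^n})(wx^j)$ because $x^{-p^n}ux^{p^n}\in P$; comparing gives $(wx^j)^{-1} x^{-p^n} (wx^j) x^{p^n} \in C_G(P)$, and since this element lies in $\langle x\rangle N$ with controlled $q$-part, one deduces $[x^{p^n}, wx^j] \in C_N(P) \cap P = Z(P)$-type constraint, which combined with $[x^{p^n},x^j]=1$ yields that $[x^{p^n},w]$ is central in $P$ and in fact trivial.

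**Deducing the commutator statement.**

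Once $w$ and $x^{p^n}$ commute, the second assertion — that $[x^k, w^{-1}] \in C_G(N)$ for every integer $k$ — should follow formally. The idea is that $\langle x\rangle N / \langle x^{p^n}\rangle$-type reductions let one write $x^k = x^{k_0} x^{p^n \ell}$ with $x^{p^n\ell}$ commuting with $w$, reducing to $0 \le k_0 < p^n$, and then one shows $[x^{k_0}, w^{-1}]$ centralizes $N$ by checking it centralizes each Sylow subgroup of $N$: on $P$ (the Sylow $q$-subgroup) one uses that $\sigma|_P = \textup{conj}(wx^j)$ together with $\sigma|_{\langle x\rangle}=\textup{id}$ to show $\textup{conj}(x^{k_0})$ and $\textup{conj}(w x^{k_0} w^{-1})$-type maps agree on $P$, and on the other Sylow subgroups $P'$ of $N$ one uses that $w \in P$ centralizes $P'$ (as $N$ is nilpotent, distinct Sylow subgroups commute), so $[x^{k_0}, w^{-1}]$ acts on $P'$ the same way as a suitable compatibility forces the trivial action. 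Assembling these, $[x^{k_0},w^{-1}]$ centralizes every Sylow subgroup of $N$, hence centralizes $N$. I would present the commutativity of $w$ and $x^{p^n}$ as the substantive step and treat the commutator corollary as a short consequence, keeping the Sylow-by-Sylow verification explicit but brief.
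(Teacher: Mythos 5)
Your proposal is correct and follows essentially the same route as the paper: the relation $[w,x^{p^n}]=1$ is obtained by comparing $\sigma(x^{p^n})=x^{p^n}$ (from $\sigma|_{\langle x\rangle}=\textup{id}$) with $\sigma(x^{p^n})=(wx^j)^{-1}x^{p^n}(wx^j)$ when $q=p$, with nilpotency of $N$ disposing of $q\neq p$, and the commutator statement is obtained by applying $\sigma$ multiplicatively to a product mixing $P$-elements with powers of $x$ --- the paper expands $\sigma(ax^kbx)$ where you expand $\sigma(x^{-k}ux^k)$, an equivalent computation yielding $[x^k,wx^j]\in C_G(P)$ and hence $[x^k,w^{-1}]\in C_G(P)$. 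The only soft spot is presentational: you write that computation out only for $k=p^n$ and gesture at general $k$, but the identical calculation applies verbatim, and your final passage from $C_G(P)$ to $C_G(N)$ is sound since $[x^k,w^{-1}]=(x^{-k}wx^k)w^{-1}$ is a product of $q$-elements of $N$, hence lies in $P$ and centralizes the remaining Sylow subgroups of the nilpotent group $N$.
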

\begin{proof}
Of course, if $q \neq p$, the result follows from the nilpotency of $G$. So assume $q= p$. Then, $x$ is a $p$-element by assumption and thus we have that $x^{p^n} \in P$. Hence, $x^{-j}w^{-1}x^{p^n} w x^{j} = \sigma(x^{p^n}) = x^{p^n}$. This shows that $\left\lbrack w, x^{p^n} \right\rbrack = 1$. To show the second statement, we note that it is sufficient to show that $\left\lbrack w^{-1},x^k \right\rbrack$ centralizes $P$. To do so, let $a,b \in P$. Then, $$ \sigma( a x^{k} b x) = \sigma(a) \sigma(x^{k}) \sigma( b) \sigma( x),$$ and thus $$\sigma( a x^{k} b x^{-k}) \sigma(x^{k+1})= \sigma(a) \sigma(x^{k}) \sigma( b) \sigma( x). $$ Applying the definition of $\sigma$ on $P$ and $\left< x \right>$ seperately,  $$ x^{-j} w^{-1}a x^{k}bx^{-k} wx^jx^{k+1} = x^{-j} w^{-1}  a w x^j x^k x^{-j} w^{-1} b w x^j x. $$ This can be rewritten as $$x^k b x^{-k} w x^{k} = w x^k w^{-1} b w. $$ Which is equivalent to $$ \left\lbrack w^{-1}, x^k \right\rbrack \in C_G(P).$$ This shows that $\left\lbrack x^{-k},w \right\rbrack \in C_G(P)$.
\end{proof}

\begin{Lemma}
Let $p, G, N$ and $x$ be as in the previous lemma. Let $P_1, ..., P_k$ be the Sylow subgroups of $N$. Let $w_i \in P_i$ be such that $\left\lbrack w_i, x^{p^n} \right\rbrack = 1$ and $\left\lbrack w_i^{-1}, x^l \right\rbrack \in Z(P_i)$ for any $1 \leq i \leq k$and positive integer $l$. Let $j_1,...,j_k$ be positive integers. Then there exists $\Phi_{j_1 ... j_k}^{w_1 ... w_k} \in \textup{Aut}_{col}(G)$ defined via $a_i \mapsto x^{-j_i}w_i^{-1} a_i w_i x^{j_i}$ for $a_i \in P_i$ and $x \mapsto x$.
\end{Lemma}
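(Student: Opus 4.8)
The plan is to exploit the nilpotency of $N$ to put everything into a convenient normal form, check that the prescribed assignment respects a presentation of $G$ (this is where both hypotheses enter), and then verify the Coleman condition one prime at a time.

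First I would record the structure. Since $N$ is nilpotent, $N=P_1\times\cdots\times P_k$ with each $P_i$ characteristic in $N$, hence normalized by $x$; every element of $G$ has a unique expression $a_1\cdots a_k x^{t}$ with $a_i\in P_i$ and $0\le t<p^{n}$. The element $z:=x^{p^{n}}$ lies in $N$ and, being a power of the $p$-element $x$, lies in the Sylow $p$-subgroup $P_{p}$ of $N$ (and $z=1$ if $p\nmid|N|$). Define $\Phi:=\Phi^{w_1\ldots w_k}_{j_1\ldots j_k}$ on normal forms by $\Phi(a_1\cdots a_k x^{t})=\bigl(\prod_{i}x^{-j_i}w_i^{-1}a_iw_ix^{j_i}\bigr)x^{t}$; since $w_i\in P_i$ and $x$ normalizes $P_i$, each factor $x^{-j_i}w_i^{-1}a_iw_ix^{j_i}=\textup{conj}(w_ix^{j_i})(a_i)$ lies in $P_i$, so $\Phi$ maps $P_i$ onto $P_i$, $\Phi|_{P_i}=\textup{conj}(w_ix^{j_i})|_{P_i}\in\textup{Aut}(P_i)$, and $\Phi(x)=x$.

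Next I would show $\Phi$ is an automorphism by checking it respects the defining relations of $G=\langle P_1,\dots,P_k,x\rangle$. The relations internal to each $P_i$ and the relations $[P_i,P_j]=1$ ($i\ne j$) are immediate since $\Phi|_{P_i}\in\textup{Aut}(P_i)$ and $\Phi(P_i)=P_i$. For the relation $x^{p^{n}}=z$ one needs $\Phi(z)=z$; writing $z\in P_{p}$, we get $\Phi(z)=x^{-j_p}w_p^{-1}zw_px^{j_p}=x^{-j_p}zx^{j_p}=z$, using $[w_p,z]=[w_p,x^{p^{n}}]=1$ and that $z$ is a power of $x$. For the conjugation relations $x^{-1}ax=\textup{conj}(x)(a)$ ($a\in N$) one needs $\Phi|_N$ to commute with $\textup{conj}(x)|_N$; on $P_i$ this reads $\textup{conj}(x)\circ\textup{conj}(w_ix^{j_i})=\textup{conj}(w_ix^{j_i})\circ\textup{conj}(x)$, i.e. $\textup{conj}(w_ix^{j_i+1})|_{P_i}=\textup{conj}(xw_ix^{j_i})|_{P_i}$, i.e. $w_ixw_i^{-1}x^{-1}=[w_i^{-1},x^{-1}]\in C_G(P_i)$, which follows from the hypothesis $[w_i^{-1},x^{l}]\in Z(P_i)$ upon taking $l=|x|-1$ so that $x^{l}=x^{-1}$, together with $Z(P_i)\subseteq C_G(P_i)$. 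Hence $\Phi$ is a well-defined endomorphism, and it is surjective because its image contains every $P_i$ and $x$; as $G$ is finite, $\Phi\in\textup{Aut}(G)$.

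Finally I would verify the Coleman condition. It suffices to treat one Sylow subgroup for each prime, since the Sylow subgroups at a given prime are conjugate and $\Phi\in\textup{Aut}(G)$. For a prime $q_i\ne p$ the subgroup $P_i$ is a normal Sylow $q_i$-subgroup of $G$ (characteristic in $N$, with $|G/N|=p^{n}$), and $\Phi|_{P_i}=\textup{conj}(w_ix^{j_i})|_{P_i}$, so we are done at $q_i$. For the prime $p$: if $p\nmid|N|$ then $\langle x\rangle$ is a Sylow $p$-subgroup fixed pointwise by $\Phi$; otherwise let $R:=P_{p}\langle x\rangle$, a Sylow $p$-subgroup of $G$. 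Here the plan is to produce $g\in G$ with $\Phi|_{R}=\textup{conj}(g)|_{R}$: one checks that $\Phi|_{R}$ agrees on $P_{p}$ with the inner automorphism $\textup{conj}_{R}(w_px^{j_p})$ of $R$, so $\mu:=\textup{conj}_{R}(w_px^{j_p})^{-1}\circ\Phi|_{R}\in\textup{Aut}(R)$ satisfies $\mu|_{P_{p}}=\textup{id}$ and $\mu(x)=cx$ with $c:=[w_p^{-1},x^{-1}]\in Z(P_{p})$; since $\mu^{p^{a}}(x)=c^{p^{a}}x$, the automorphism $\mu$ has $p$-power order, so by Lemma~\ref{lemmaopzn} (applied to $R$ with normal subgroup $P_{p}$) $\mu$ is trivial on $R/Z(P_{p})$, and one wants to conclude that $\mu$ is inner in $R$, which comes down to finding $z_0\in Z(P_{p})$ with $(\textup{conj}(x)-1)(z_0)=c$. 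I expect this last step to be the main obstacle: showing that the central element $c$ lies in $(\textup{conj}(x)-1)Z(P_{p})$. The hypothesis $[w_p,x^{p^{n}}]=1$ does yield the necessary ``norm'' relation $\prod_{l=0}^{p^{n}-1}\textup{conj}(x)^{l}(c)=1$ (by telescoping, since $\textup{conj}(x)^{p^{n}}(w_p)=w_p$), placing $c$ in the kernel of the norm on $Z(P_{p})$; but membership in the image of $\textup{conj}(x)-1$ is strictly stronger in general, so closing the argument appears to require exploiting the interaction between $w_p$ and $x$ more carefully than the two stated commutator conditions alone provide.
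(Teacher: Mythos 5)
Your verification that $\Phi_{j_1\ldots j_k}^{w_1\ldots w_k}$ is a well-defined endomorphism is correct and, in substance, the same as the paper's: the paper works directly on normal forms $a_1\cdots a_kx^t$ and reduces well-definedness to $[w_p,x^{p^n}]=1$ and the homomorphism property to $[w_i^{-1},x^j]$ being central in $P_i$, exactly the two relations you isolate via the presentation of the cyclic extension. Your observation that surjectivity then follows from finiteness, and that the Coleman condition need only be checked on one Sylow subgroup per prime (with the normal Sylow $q$-subgroups $P_i$, $q\neq p$, being immediate), is also fine.

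Where your proof stops, however, is a genuine gap in your argument: you do not produce a $g\in G$ with $\Phi|_R=\textup{conj}(g)|_R$ for the Sylow $p$-subgroup $R=P_p\langle x\rangle$, and the reduction you carry out is the right one --- after dividing by $\textup{conj}(w_px^{j_p})$ one is left with an automorphism of $R$ that is trivial on $P_p$ and sends $x$ to $cx$ with $c=[w_p^{-1},x^{-1}]\in Z(P_p)$, and one must realize this as conjugation by an element of $C_G(P_p)$, which (after stripping off the $p'$-components and powers of $x$, which only conjugate the problem) amounts to writing $c$ in the image of $z\mapsto z^{-1}(x^{-1}zx)$ on $Z(P_p)$. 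The hypothesis $[w_p,x^{p^n}]=1$ gives only the telescoped norm relation $\prod_{l}\textup{conj}(x)^l(c)=1$, and kernel-of-norm modulo image-of-$(\textup{conj}(x)-1)$ is the Tate cohomology group $\hat{H}^{-1}(\langle x\rangle/\langle x^{p^n}\rangle, Z(P_p))$, which does not vanish in general; Lemma~\ref{lemmaopzn} does not apply because $\Phi$ does not fix $R$ elementwise. So your proof is incomplete at exactly this point. You should be aware, though, that the paper's own proof offers nothing here either: after the well-definedness and homomorphism computations it concludes with the bare assertion that $\Phi_{j_1\ldots j_k}^{w_1\ldots w_k}$ is ``clearly'' a Coleman automorphism. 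You have therefore not missed an argument that the paper supplies; you have correctly isolated the one step of the lemma that the paper does not justify, and closing it requires either additional information about how $w_p$ interacts with $x$ (e.g.\ that $w_p$ arises, as in Lemma~\ref{speciallemma}, from an honest Coleman automorphism, in which case the condition holds by construction) or an argument that the relevant cohomological obstruction vanishes in this setting.
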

\begin{proof}
First, we check that the proposed mapping is well-defined. To do so, suppose $ax^j  = bx^t $ with $a,b \in N$ and $j,t $ positive integers. Then $x^{t-j} = b^{-1}a \in N \cap \left< x \right> = \left< x^{p^n} \right>$. Thus there exists an integer $l$ such that $b^{-1}a = x^{lp^n}$. If $p$ does not divide $|N|$, then $N \cap \left< x \right> = 1$. Thus, $a = b$ and $j \equiv t \mod p^n$. Thus we only need to consider the case that $p$ divides $|N|$. Without loss of generality, we may assume that $P_k$ is the Sylow $p$-subgroup of $N$. Thus we need to show that $ax^j = (bx^{lp^n})x^j$ and $ b (x^{lp^n}x^j) = b x^t$ have the same image after applying the relations. Let $b= b_1 ... b_k$ be the decomposition of $b$ into its $p_i$-parts. As $w_k$ and $x^{p^n}$ commute, we find that \begin{align*} (bx^{lp^n}) x^j \mapsto &x^{-j_1}w_1^{-1} b_1 w_1 x^{j_1} ... x^{-j_k} w_k^{-1} b_k x^{lp^n} w_k x^{j_k} x^j \\ = &x^{-j_1}w_1^{-1} b_1 w_1 x^{j_1} ... x^{-j_k} w_k^{-1} b_k  w_k x^{j_k} x^{lp^n} x^j. \end{align*}
On the other hand, $$ b (x^{lp^n}x^j) \mapsto x^{-j_1}w_1^{-1} b_1 w_1 x^{j_1} ... x^{-j_k} w_k^{-1} b_k  w_k x^{j_k} x^{lp^n} x^j. $$ This shows that these relations induce a well-defined map $\Phi_{j_1 ... j_k}^{w_1 ... w_k}$ on $G$.

Second, we show that $\Phi_{j_1 ... j_k}^{w_1 ... w_k}$ is a homomorphism. Let $a, b \in N$ with $a= a_1 ... a_k$ and $b= b_1 ... b_k$ their decompositions into its $p_i$-parts. We should show that $$ \Phi_{j_1 ... j_k}^{w_1 ... w_k}(ax^j b x^l) = \Phi_{j_1 ... j_k}^{w_1 ... w_k}(ax^j) \Phi_{j_1 ... j_k}^{w_1 ... w_k}(bx^l).$$ A quick calculation shows that this is equivalent to the equality \begin{align*} &\left(x^{-j_1}w_1^{-1} a_1 x^j b_1 x^{-j} w_1 x^{j_1} ... x^{-j_k} w_k^{-1} a_k x^{j} b_k x^{-j} w_k x^{j_k}\right) x^{j+k} \\ = &\left(x^{-j_1}w_1^{-1} a_1 w_1 x^{j} w_1^{-1}b_1w_1 x^{-j} x^{j_1} ... x^{-j_k}w_k^{-1} a_k w_k x^{j} w_k^{-1} b_k w_k x^{-j} x^{j_k}\right) x^{j+k}.\end{align*} Thus we find that for any $ 1 \leq i \leq k$, we need the following equalities $$ x^j b_i x^{-j}w_i = w_i x^{j}w_i^{-1}b_i w_i x^j.$$ This can be rewritten as $$ \left\lbrack w_i^{-1}, x^{j} \right\rbrack b_i = b_i \left\lbrack w_i^{-1}, x^j \right\rbrack.$$ Thus, by the assumptions of the theorem, $\Phi_{j_1 ... j_k}^{w_1 ... w_k}$ is a homomorphism. Clearly, $\Phi_{j_1 ... j_k}^{w_1 ... w_k}$ is a Coleman automorphism of $G$.
\end{proof}
Using the notation of the previous lemmas, we define the following subgroup of $P_i$; $$D_i = \left\lbrace w_i \in P_i \mid \left\lbrack x^{p^n}, w_i \right\rbrack = 1, \left\lbrack x^t, w_i^{-1} \right\rbrack \in C_{G}(N) \textnormal{ for all positive integers } t \right\rbrace.$$
\begin{Theorem} Let $p$ be a prime number and $G$ a nilpotent-by-($p$-power cyclic) group. Let $N$ be a nilpotent normal subgroup of $G$ such that $G/N \cong C_{p^n}$ for some positive integer $n$. Let $x \in G$ be a $p$-element such that $G/N \cong \left< \overline{x} \right>$. Let $P_1,...,P_k$ be the Sylow subgroups of $N$. Let $r_i$ denote the smallest positive integer such that $\textup{conj}(x^{r_i})|_{P_i} = \textup{conj}(h_i)|_{P_i}$ for some $h_i \in P_i$. Renumber such that $r_1 \leq ... \leq r_k$. Put $T_j$ a transversal of $D_j/ \left(Z(P_j)C_{P_j}(\left<x \right>) \left<h_j \right> \cap D_j\right)$, where $P_j$ is the Sylow $p$-subgroup of $N$ (if it is non-trivial) and put $T_i$ a transversal of $D_i/\left(Z(P_i)C_{P_i}(\left<x \right>)\cap D_i\right)$. Then, $$ \textup{Aut}_{col}(G) = \textup{Inn}(G) \rtimes K, $$ with $$ K = \left\lbrace \Phi_{j_1 ... j_{k-1} 0}^{w_1 ... w_k} \mid 0 \leq j_i < r_i, w_i \in T_i \right\rbrace. $$ Furthermore, $\textup{Out}_{col}(G) \cong K$.
\end{Theorem}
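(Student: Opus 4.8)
The plan is to follow the template of Theorem~\ref{Theoremoutcol} closely, upgrading each step to accommodate the non-abelian Sylow subgroups $P_i$ and the twisting elements $w_i$. First I would verify that $K$ is genuinely a subgroup of $\textup{Aut}_{col}(G)$: the previous lemma shows each $\Phi_{j_1\dots j_k}^{w_1\dots w_k}$ is a well-defined Coleman automorphism provided $w_i\in D_i$, and the condition $w_i\in T_i$ just picks coset representatives; a short composition calculation $\Phi_{j_1\dots j_k}^{w_1\dots w_k}\circ\Phi_{j_1'\dots j_k'}^{w_1'\dots w_k'}$ shows the set is closed (here one uses $[w_i^{-1},x^t]\in C_G(N)$ to move the $w_i$ past the $x^{j_i}$), and that the composite lies in the prescribed form after replacing the $w$-entries by transversal representatives and absorbing the discrepancy into $\textup{Inn}(G)$.

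Next I would pin down exactly when two elements $\Phi_{j_1\dots j_k}^{w_1\dots w_k}$ and $\Phi_{l_1\dots l_k}^{v_1\dots v_k}$ induce the same outer automorphism, i.e.\ differ by $\textup{conj}(g)$ with $g=bx^m\in G$. Evaluating on $x$ forces $b\in C_N(\langle x\rangle)=C_N(\langle x\rangle)$ (so $b$ contributes an inner piece living in the denominator groups), and evaluating on each $P_i$ yields, for all $a_i\in P_i$,
\[
x^{-j_i}w_i^{-1}a_iw_ix^{j_i}=x^{-l_i-m}(b^{-1}v_i^{-1})\,a_i\,(v_ib)x^{l_i+m},
\]
which rearranges to the statement that $x^{\,l_i-j_i+m}$ times a product of $w_i,v_i,b$-terms centralizes $P_i$. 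Using the minimality of $r_i$ (so $\textup{conj}(x^{r_i})|_{P_i}=\textup{conj}(h_i)|_{P_i}$) this translates into: $l_i-j_i+m\equiv 0\pmod{r_i}$ up to absorbing powers of $h_i$, and $w_i\equiv v_i$ modulo $Z(P_i)C_{P_i}(\langle x\rangle)$ (with the extra factor $\langle h_j\rangle$ in the $p$-Sylow slot, which is why $T_j$ has the larger denominator). Choosing the representatives $j_i\in[0,r_i)$ and $w_i\in T_i$ and normalizing the last coordinate ($j_k=0$) then makes these representatives unique in $\textup{Out}_{col}(G)$, giving the internal semidirect product decomposition and the isomorphism $\textup{Out}_{col}(G)\cong K$ once surjectivity is established.

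Finally, surjectivity: given any $\sigma\in\textup{Aut}_{col}(G)$, since $\langle x\rangle$ is a $p$-group there is $a\in N$ with $\sigma|_{\langle x\rangle}=\textup{conj}(a)|_{\langle x\rangle}$; replacing $\sigma$ by $\widetilde\sigma=\sigma\circ\textup{conj}(a^{-1})$ we may assume $\sigma|_{\langle x\rangle}=\textup{id}$. By Lemma~\ref{colemannormalinvariant}, $\sigma|_N\in\textup{Aut}(N)$, and since each $P_i$ is characteristic in $N$, $\sigma|_{P_i}\in\textup{Aut}(P_i)$; Coleman-ness gives $\sigma|_{P_i}=\textup{conj}(g_i)|_{P_i}$ for some $g_i\in G$, and writing $g_i=w_ix^{j_i}$ with $w_i$ the $p_i$-part chosen appropriately (the $p'$-part of the $N$-component of $g_i$ acts trivially on $P_i$ and can be dropped), Lemma~\ref{speciallemma} shows precisely that $[w_i,x^{p^n}]=1$ and $[w_i^{-1},x^t]\in C_G(N)$, i.e.\ $w_i\in D_i$. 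Hence $\widetilde\sigma=\Phi_{j_1\dots j_k}^{w_1\dots w_k}$, and multiplying by $\textup{conj}(x^{-j_k})$ and adjusting the $w$-coordinates to their transversal representatives lands us in $K$ modulo $\textup{Inn}(G)$. The main obstacle is the bookkeeping in the $p$-Sylow slot $P_j$: there the element $x$ itself contributes to $P_j$, so $\textup{conj}(x^{r_j})|_{P_j}$ need not be the identity but only an inner automorphism $\textup{conj}(h_j)|_{P_j}$, which is exactly why the denominator of $T_j$ must include $\langle h_j\rangle$ and why the relation $l_j-j_j+m\equiv 0\pmod{r_j}$ holds only after absorbing a power of $h_j$ into $w_j$; getting this compatibility exactly right — and checking it does not disturb the well-definedness conditions $w_j\in D_j$ — is the delicate point.
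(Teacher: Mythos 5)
Your proposal follows essentially the same route as the paper's own proof: reduce to an automorphism fixing $\left< x \right>$, use Lemma \ref{speciallemma} to place the twisting elements in the $D_i$, identify the result with some $\Phi_{j_1 \dots j_k}^{w_1 \dots w_k}$, and then compute exactly when two such maps differ by $\textup{conj}(g)$ to obtain both distinctness modulo $\textup{Inn}(G)$ and the extra $\left< h_j \right>$ factor in the $p$-Sylow slot. The only difference is cosmetic (you additionally spell out the closure of $K$ under composition, which the paper asserts without proof), so no further comparison is needed.
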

\begin{proof}
Note that $h_j$ is the only non-trivial of the $h_i$. Clearly, $K$ is a subgroup of $\textup{Aut}_{col}(G)$.  We will now show that $\textup{Aut}_{col}(G) = \textup{Inn}(G)K$. Let $\sigma \in \textup{Aut}_{col}(G)$. There exists a $w \in N$ and $k$ a positive integer such that $\sigma$ acts as $\textup{conj}(wx^k)$ on the Sylow $p$-subgroup containing $\left<x \right>$. Denote $\tilde{\sigma} = \sigma \circ \textup{conj}(w^{-1})$, then $\tilde{\sigma}|_{\left<x \right>} = \textup{id}_{\left<x \right>}$. As $\tilde{\sigma}$ is a Coleman automorphism, it follows that there exists $w_k \in P_k$ and a positive integer $l$ such that $\tilde{\sigma}|_{P_k} = \textup{conj}(w_k x^l)$. Put $\sigma_1 = \textup{conj}(x^{-l}) \circ \tilde{\sigma}$. Clearly, $\sigma_1 = \Phi_{j_1 ... j_{k-1} 0}^{w_1 ... w_k}$ for some positive integers $j_1, ..., j_{k-1}$ and $w_1 \in P_1, ..., w_{k-1} \in P_{k-1}$. Note that, by Lemma \ref{speciallemma}, it follows that $w_i \in D_i$.

Second, we show that each element of $K$ is different from the rest.  Let $g \in G$, $w_1,s_1 \in P_1, ..., w_k, s_k \in P_k$ and $j_1,...,j_k, l_1,...,l_n$ positive integers. Suppose that $$\Phi_{j_1...j_k}^{w_1 ... w_k} = \textup{conj}(g) \circ \Phi_{l_1...l_k}^{s_1 ... s_k}.$$
Clearly, $g \in C_{G}(\left< x \right>)$ and for any $a_i \in P_i$ we have $$ g^{-1} x^{-l_i} s_i^{-1} a_i s_i x^{l_i} g = x^{-j_i} w_i^{-1} a_i w_i x^{j_i}.$$
Equivalently, denoting $a_i' = g^{-1}s_i^{-1} a_i s_i g$, $$ x^{j_i - l_i} a_i' x^{l_i - j_i} = w_i^{-1} s_i g a_i' g^{-1}s_i^{-1} w_i. $$
Put $g = ax^f$, where $a \in N$. Its Sylow decomposition is $a = a_1 ... a_k$. Hence, $a \in C_N(\left< x \right>)$. Thus, the previous equation is equivalent with $$ x^{-f+j_i-l_i} a_i' x^{l_i -j_i +f} = x^{-f} w_i^{-1} s_i x^f a a_i' a^{-1} x^{-f} s_i^{-1} w_i x^f.$$
Hence, $-f+j_i-l_i \equiv 0 \mod p^{r_i}$ and for the Sylow $p$-subgroup $P_j$ (if it exists) of $N$ it follows that $x^{l_j - j_j} w_j^{-1} s_j x^f a_j \in C_{G}(P_j)$. As this is a normal subgroup of $G$, it follows that $x^{l_j -j_j +f} a_j^{-1} w_j^{-1}s_j \in C_{G}(P_j)$. Or equivalently, $w_j^{-1}s_j \in Z(P_j)C_{P_j}(\left<x \right>) \left<h_j\right>$. As $x$ is a $p$-element, we find for the other Sylow subgroups $P_i$ that $x^{-f}w_i^{-1} s_i x^f a_i \in Z(P_i)$. As both $Z(P_i)$ and $C_{P_i}(\left<x \right>)$ remain invariant under conjugation by $x$, it follows that this is equivalent to $w_i^{-1} s_i \in Z(P_i) C_{P_i}(\left< x \right>)$. This shows that all elements of $K$ are different even up to an inner automorphism and that for every Coleman automorphism $\sigma$ of $G$, there exist positive integers $f_1,...,f_{k-1}$ and $y_1,...,y_k \in N$ such that $\sigma$ and $\Phi_{f_1...f_{k-1}}^{y_1...y_k}$ induce the same outer automorphism. Together with the first paragraph of this proof, this shows that $\textup{Aut}_{col}(G) = \textup{Inn}(G) K$.  Furthermore, it shows that $\textup{Inn}(G) \cap K = 1$. Thus $\textup{Aut}_{col}(G) = \textup{Inn}(G) \rtimes K$.
\end{proof}

\section{Meta-nilpotent groups with abelian-by-cyclic Sylow 2-subgroup}
In this section a slight generalization of J. Hai and S. Ge's recent paper \cite{Hainilpotentbycyclicabeliansylow2group} is shown using earlier stated results. Note that the concept of Coleman automorphism in their paper includes among others the restriction that it has to be class-preserving as well. For completeness' sake, we include the statement of J. Hai and S. Ge's theorem.
\begin{Theorem}\label{nilpotentbycyclic} Let $G$ be a finite group and let $N$ be a nilpotent normal subgroup of $G$ such that $G/N$ is cyclic. If the Sylow $2$-subgroup of $N$ is abelian, then every class-preserving Coleman automorphism of $G$ of $2$-power order is inner. \end{Theorem}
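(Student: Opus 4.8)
\emph{Plan.} The idea is to reduce to the case where $G/N$ is a $2$-group, normalise $\sigma$ so that it fixes a Sylow $2$-subgroup pointwise, and then show that the only remaining freedom sits in the action of $\sigma$ on the $2$-complement of $N$, which the class-preserving hypothesis forces to be trivial.

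\emph{Step 1: reduce to $G/N$ a $2$-group.} Observe that ``every class-preserving Coleman automorphism of $2$-power order is inner'' is equivalent to ``$\textup{Out}_c(G)\cap\textup{Out}_{col}(G)$ is a $2'$-group''. Let $H\le G$ be the subgroup with $N\le H$ and $H/N$ the Sylow $2$-subgroup of the cyclic group $G/N$. Then $2\nmid|G/H|$ and $H$ again satisfies the hypotheses of the theorem, so by Theorem~\ref{Corollaryp'groupserfelijk} it suffices to prove the statement for $H$; hence I may assume $G/N\cong C_{2^n}$, generated by the image of a $2$-element $x$ (the case $G=N$ is immediate, since for nilpotent $G$ one has $\textup{Out}_{col}(G)=1$ by Proposition~\ref{directprodcoleman}). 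Writing $N=N_2\times M$ with $M=O_{2'}(N)$ and $N_2$ the abelian Sylow $2$-subgroup of $N$, one checks that $M=O_{2'}(G)$, that $P:=N_2\langle x\rangle$ is a Sylow $2$-subgroup of $G$, and that $G=M\rtimes P$; note that $P/N_2$ is cyclic.

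\emph{Step 2: normalise $\sigma$ and isolate the problem.} Let $\sigma$ be a class-preserving Coleman automorphism of $2$-power order. Since $P$ is a $2$-group, $\sigma|_P=\textup{conj}(h)|_P$ for some $h$, and replacing $\sigma$ by a suitable odd power of $\sigma\circ\textup{conj}(h^{-1})$ — which changes neither the $2$-power order, nor the image in $\textup{Out}(G)$, nor the properties of being class-preserving and Coleman — I may assume $\sigma|_P=\textup{id}_P$. Then $\sigma$ induces the identity on $G/M\cong P$, and, $G/N$ being abelian, also on $G/N$; moreover $\sigma|_{N_2}=\textup{id}$. As the automorphisms of $G=M\rtimes P$ fixing $P$ pointwise are precisely those of the form $mp\mapsto\beta(m)p$ with $\beta\in\textup{Aut}(M)$ commuting with the action of $P$, it remains only to prove $\beta:=\sigma|_M=\textup{id}_M$: once this holds, $\sigma|_N=\textup{id}_N$, and Lemma~\ref{lemmaopzn} (applied with the normal subgroup $N$, using that $\sigma$ fixes the Sylow $2$-subgroup $P$ pointwise and that $O_2(Z(N))=N_2$ because $N_2$ is abelian) gives $\sigma=\textup{conj}(z)$ with $z\in N_2$, so $\sigma$ is inner.

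\emph{Step 3: the core — $\beta=\textup{id}_M$.} Since $M=\prod_q M_q$ with $M_q=O_q(G)$ characteristic in $G$, it suffices to show $\beta|_{M_q}=\textup{id}$ for each odd prime $q$. Fix $q$. The conjugation action of $P$ on $M_q$ has kernel $C_P(M_q)$, and $G$ maps onto $G_q:=M_q\rtimes\bigl(P/C_P(M_q)\bigr)$ (with $P/C_P(M_q)$ acting faithfully) with kernel $O_{q'}(M)\cdot C_P(M_q)$, which $\sigma$ leaves invariant because it stabilises $O_{q'}(M)$ and fixes $C_P(M_q)\le P$ pointwise. Hence $\sigma$ induces on $G_q$ a class-preserving Coleman automorphism $\bar\sigma_q$ of $2$-power order (both properties descend to $\sigma$-invariant quotients). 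In $G_q$ the normal $q$-subgroup $M_q$ is self-centralizing, so by Theorem~\ref{selfcentralpgroups} $\bar\sigma_q$ is inner; since it also fixes the Sylow $2$-subgroup $P/C_P(M_q)$ pointwise, it is conjugation by an element $(a,\bar e)$ with $a$ fixed by $P/C_P(M_q)$ and $\bar e\in Z\bigl(P/C_P(M_q)\bigr)$, and because $\bar\sigma_q$ has $2$-power order while $a$ has odd order, the contribution of $a$ drops out. Translating back to $G$, this says $\beta|_{M_q}=\textup{conj}(e_q)|_{M_q}$ for some $e_q\in P$ whose image in $P/C_P(M_q)$ is central.

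\emph{Step 4: using class-preservation; the main obstacle.} It remains to upgrade ``$\beta|_{M_q}$ is conjugation by an element of $P$'' to ``$\beta|_{M_q}=\textup{id}$''. For this I would feed the class-preserving property of $\sigma$ back into $G$: evaluating $\sigma(m_qx)\sim_G m_qx$ for $m_q\in M_q$ and comparing $M$- and $P$-components in $G=M\rtimes P$ yields, for each $m_q$, elements $b\in M_q$ and $p'\in C_P(x)$ with $\textup{conj}(e_q)(m_q)=\textup{conj}(p')\bigl(b^{-1}m_q\,xbx^{-1}\bigr)$; since $P$ is metabelian with abelian normal subgroup $N_2$ and cyclic quotient $P/N_2$ — this is exactly where the hypothesis that the Sylow $2$-subgroup of $N$ is abelian is used — these identities are rigid enough to force $e_q\in C_P(M_q)$, i.e.\ $\beta|_{M_q}=\textup{id}$. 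Steps 1--3 are routine given the machinery already established in the paper; the genuine difficulty is this last step, where the bookkeeping of the conjugating elements must be made to collapse, and it is the only point at which the abelian-Sylow-$2$ hypothesis is indispensable.
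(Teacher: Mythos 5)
First, a remark on context: the paper does not actually prove Theorem~\ref{nilpotentbycyclic}; it is quoted verbatim from J.~Hai and S.~Ge and included ``for completeness' sake,'' so there is no in-paper proof to compare yours against. Judged on its own, your proposal is incomplete, and the gap is not merely that Step~4 is left as a sketch (which you candidly admit), but that the goal you set for Steps~2--4 is unattainable. You reduce the theorem to proving $\beta:=\sigma|_M=\textup{id}_M$ for every normalized $\sigma$, equivalently in Step~4 to forcing $e_q\in C_P(M_q)$. This is false. Take $G=D_7=C_7\rtimes C_2=N\rtimes\langle x\rangle$ (so $N=C_7$ is nilpotent with trivial, hence abelian, Sylow $2$-subgroup and $G/N$ is cyclic) and $\sigma=\textup{conj}(x)$: this is an inner, hence class-preserving Coleman, automorphism of order $2$ which already satisfies your normalization $\sigma|_P=\textup{id}_P$ since $P=\langle x\rangle$ is abelian, yet $\sigma|_{C_7}$ is inversion. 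So $\beta\neq\textup{id}_M$ and $e_q=x\notin C_P(M_q)$, and no amount of ``rigidity'' in Step~4 can prove otherwise. The correct target after your normalization is not $\sigma=\textup{id}$ but $\sigma=\textup{conj}(g)$ for some $g\in C_G(P)=C_M(P)\times Z(P)$; in other words, the real content of the theorem is a gluing problem: the local conjugating elements $e_q$ you produce for the various odd primes $q$, together with the identity on $P$, must be shown to come from conjugation by a \emph{single} element. That consistency across primes is exactly where the class-preserving hypothesis and the abelian Sylow $2$-subgroup must do their work (compare Corollary~\ref{Corollaryabelianopz}, which reduces innerness precisely to $\sigma$ agreeing with one conjugation on all of $NP$), and your proposal does not engage with it.

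The parts you do carry out are essentially sound, with one point needing justification: in Step~3 the claim that $M_q$ is self-centralizing in $G_q=M_q\rtimes\bigl(P/C_P(M_q)\bigr)$ is not automatic from faithfulness of the action alone, since an element $(a,\bar p)$ centralizes $M_q$ whenever the action of $\bar p$ on $M_q$ equals $\textup{conj}(a^{-1})|_{M_q}$; you must add that the image of the $2$-group $P/C_P(M_q)$ in $\textup{Aut}(M_q)$ meets the $q$-group $\textup{Inn}(M_q)$ trivially, whence such a $\bar p$ is trivial and $a\in Z(M_q)$. With that repaired, Steps~1--3 correctly reduce the theorem to: for each odd prime $q$ there is a $2$-element $e_q\in P$, central modulo $C_P(M_q)$, with $\sigma|_{M_q}=\textup{conj}(e_q)|_{M_q}$, and one must show these data are simultaneously realized by one element of $C_G(P)$. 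That last step is the theorem, not bookkeeping, and it is missing.
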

First, we will show that in this theorem the number $2$ can be replaced any prime number. For this, one can easily adapt the proof of J. Hai and S. Ge to the more general case. However, we first need to formulate the following Corollary to Lemma \ref{lemmaopzn}.
\begin{Corollary}\label{Corollaryabelianopz} Let $G$ be a finite group and let $N$ be a normal subgroup of $G$ for which the quotient group $G/N$ is abelian. Let $\sigma$ be a class-preserving Coleman automorphism of $G$. Then $\sigma$ is an inner automorphism of $G$ if and only if $\sigma|_{N P} = \textup{conj}(g)|_{N P}$ for some $g \in G$ and some Sylow $p$-subgroup $P$ of $G$.
\end{Corollary}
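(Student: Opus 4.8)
The plan is to prove the nontrivial direction: assuming $\sigma|_{NP} = \textup{conj}(g)|_{NP}$ for some $g \in G$ and some Sylow $p$-subgroup $P$ of $G$, we must show $\sigma$ is inner. Replacing $\sigma$ by $\sigma \circ \textup{conj}(g^{-1})$ — which is again a class-preserving Coleman automorphism, and which is inner iff $\sigma$ is — we may assume $\sigma|_{NP} = \textup{id}_{NP}$. The key observations are then: first, since $\sigma$ is class-preserving, Theorem \ref{orderclasspreserving} tells us $\sigma$ has order divisible only by primes in $\pi(G)$, and more importantly $\sigma$ acts trivially on $G/[G,G]$, hence trivially on $G/N$ since $G/N$ is abelian; and second, since $\sigma|_P = \textup{id}_P$, $\sigma$ fixes a Sylow $p$-subgroup of $G$ elementwise. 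So $\sigma$ induces the identity on $G/N$ and restricts to the identity on $N$.

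The next step is to reduce to an automorphism of $p$-power order so that Lemma \ref{lemmaopzn} applies. Write $p^a$ for the $p$-part of the order of $\sigma$ and let $m$ be the $p'$-part, so $\gcd(m,p)=1$. The automorphism $\tau := \sigma^m$ has $p$-power order, still satisfies $\tau|_N = \textup{id}_N$ and $\tau|_P = \textup{id}_P$, and still induces the identity on $G/N$. Applying Lemma \ref{lemmaopzn} with the normal subgroup $N$ (note $\tau$ fixes the Sylow $p$-subgroup $P$ elementwise), we conclude $\tau = \textup{conj}(z)$ for some $z \in O_p(Z(N))$. So $\sigma^m$ is inner. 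It remains to handle the $p'$-part: one shows that a class-preserving Coleman automorphism $\sigma$ with $\sigma^m$ inner, where $\gcd(m, |{\sigma}|) $ cofactor is a $p$-power, is itself inner. The cleanest route is to recall that $\textup{conj}(z)$ with $z \in O_p(Z(N))$ has $p$-power order, so $\sigma^m$ has $p$-power order; combined with $\sigma^{p^a}$ having order dividing $m$ (a $p'$-number) and order dividing some power of $|\sigma|$, a standard coprime-order argument (or simply: $\sigma = (\sigma^m)^{u} (\sigma^{p^a})^{v}$ for integers $u,v$ with $u m + v p^a = 1$, and showing $\sigma^{p^a}=\textup{id}$) forces $\sigma$ itself to be inner.

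The main obstacle is the last reduction — disposing of the $p'$-part — since Lemma \ref{lemmaopzn} only directly controls automorphisms of $p$-power order. The natural fix is to argue that $\sigma^{p^a}$ is a class-preserving Coleman automorphism of $p'$-order that is trivial on $N$, trivial on $G/N$, and trivial on $P$; since it has $p'$-order and fixes $P$ (hence a Sylow $p$-subgroup) elementwise while inducing the identity on both $N$ and $G/N$, one deduces via the argument in the proof of Theorem \ref{Corollaryp'groupserfelijk} (for $x \in G$ write $\sigma^{p^a}(x) = x n_x$ with $n_x \in O_p(Z(N))$ a $p$-element, and iterate $\sigma^{p^a}$ to force $n_x = e$) that $\sigma^{p^a} = \textup{id}_G$. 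Then $\sigma = (\sigma^m)^{u}$ is inner, completing the proof. The converse direction is immediate: if $\sigma = \textup{conj}(g)$ then trivially $\sigma|_{NP} = \textup{conj}(g)|_{NP}$.
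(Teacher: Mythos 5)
Your reduction to $\sigma|_{NP}=\textup{id}_{NP}$, the observation that a class-preserving automorphism is trivial on the abelian quotient $G/N$, and the application of Lemma \ref{lemmaopzn} to the $p$-power-order automorphism $\sigma^{m}$ reproduce exactly the paper's (very terse) argument, and that part is fine. You are also right to flag that this only controls the $p$-part: it shows that $\sigma^{m}$ is inner, i.e.\ that the class of $\sigma$ in $\textup{Out}(G)$ has $p'$-order, which settles the corollary whenever that class has $p$-power order (the only case used later, e.g.\ in Theorem \ref{generalnilpotentbycyclic}); the paper's sketch does not address anything beyond this.

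The difficulty is that your proposed disposal of the $p'$-part does not work. You write $\sigma^{p^{a}}(x)=xn_{x}$ and assert $n_{x}\in O_{p}(Z(N))$, ``a $p$-element''. That containment would have to come from Lemma \ref{lemmaopzn}, but the lemma applies only to automorphisms of \emph{$p$-power} order, and $\sigma^{p^{a}}$ has order $m$, a $p'$-number. What the direct computation actually yields (from $\sigma^{p^{a}}|_{N}=\textup{id}_{N}$ and triviality on $G/N$) is $n_{x}\in Z(N)$ with $n_{x}^{m}=1$, so $n_{x}$ is a $p'$-element — the opposite of what you claim. The iteration $x=(\sigma^{p^{a}})^{m}(x)=xn_{x}^{m}$ then merely restates $n_{x}^{m}=1$ and forces nothing: the coprimality between the order of the automorphism and the order of the central defect element, which is what powers this trick in Theorems \ref{selfcentralcharacteristic}, \ref{selfcentralpgroups} and \ref{Corollaryp'groupserfelijk}, is exactly what is absent here. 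So $\sigma^{p^{a}}=\textup{id}_{G}$ is not established, and hence neither is the innerness of $\sigma$ itself when its outer class is not of $p$-power order. To close this you would either have to restrict the statement to that case (which suffices for all of the paper's applications) or produce a genuinely new argument for the $q$-primary components of $\sigma$ with $q\neq p$, where the hypothesis $\sigma|_{NP}=\textup{id}$ gives no grip on a Sylow $q$-subgroup.
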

To show the above result, one only needs to modify $\sigma$ as before such that $\sigma|_{N P} = \textup{id}_{N P}$. This can be done by considering $\sigma \circ \textup{conj}(g^{-1})$ and taking a suitable $p'$-power. Moreover, it is clear that $\sigma$ induces the identity on $G/N$, by Theorem \ref{p'groupserfelijk}. The result then follows from Lemma \ref{lemmaopzn}.
\begin{Theorem}\label{generalnilpotentbycyclic}  Let $G$ be a finite group and let $N$ be a nilpotent normal subgroup of $G$ such that $G/N$ is cyclic. Let $p$ be a prime number. If the Sylow $p$-subgroup of $N$ is abelian, then every class-preserving Coleman automorphism of $G$ of $p$-power order is inner. \end{Theorem}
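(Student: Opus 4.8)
The plan is to imitate the structure of Theorem~\ref{nilpotentbycyclic} (the case $p=2$) but to locate exactly where the hypothesis ``Sylow $2$-subgroup abelian'' is used and replace it by ``Sylow $p$-subgroup abelian''. Let $\sigma$ be a class-preserving Coleman automorphism of $G$ of $p$-power order; by Theorem~\ref{orderclasspreserving} we may assume $p \mid |G|$. Write $N = P \times H$ in its Sylow decomposition, where $P$ is the (abelian) Sylow $p$-subgroup of $N$ and $H = O_{p'}(N)$ is the Hall $p'$-part, both characteristic in $N$ and hence normal in $G$. The idea is to successively straighten $\sigma$ on a chain of subgroups, at each stage modifying $\sigma$ by an inner automorphism and passing to a $p'$-power (which, by Theorem~\ref{orderclasspreserving} and the class-preserving hypothesis being inherited, changes nothing that matters for the conclusion ``$\sigma$ is inner''), until $\sigma$ fixes enough of $G$ to apply Corollary~\ref{Corollaryabelianopz}.

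First I would arrange that $\sigma$ restricts to the identity on a Sylow $p$-subgroup $\bar P$ of $G$ containing $P$: since $\sigma$ is Coleman there is $g$ with $\sigma|_{\bar P} = \textup{conj}(g)|_{\bar P}$, and replacing $\sigma$ by $(\sigma \circ \textup{conj}(g^{-1}))^m$ with $m$ the $p'$-part of the order kills this, at the cost of keeping only that $\sigma$ has $p$-power order and fixes $\bar P$ elementwise. In particular $\sigma|_P = \textup{id}_P$. Next, because $\sigma$ is class-preserving and $H$ is a normal $p'$-group on which $\sigma$ acts (by Lemma~\ref{colemannormalinvariant}, or directly), the induced automorphism $\sigma|_H$ is a class-preserving automorphism of the nilpotent group $H$ of $p$-power order; a class-preserving automorphism of a nilpotent group is inner (indeed $\textup{Out}_c$ of a nilpotent group is a $p$-group only through... ) — more safely, $\sigma|_H$ has $p$-power order while $\textup{Aut}_c(H)$ has order divisible only by primes dividing $|H|$, which are all $\ne p$, so $\sigma|_H = \textup{id}_H$. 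Combining, $\sigma|_N = \textup{id}_N$ since $N = P \times H$ and $\sigma$ fixes both factors pointwise (here is exactly where abelianness of $P$ enters: it lets us conclude $\sigma|_P=\textup{id}$ from $p$-centrality, whereas for non-abelian $P$ one would only get $\sigma|_P$ inner).

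Now $\sigma$ fixes $N$ pointwise, fixes $\bar P \supseteq P$ pointwise, and $G/N$ is cyclic, so in particular abelian; thus $\sigma|_{N\bar P} = \textup{id}_{N\bar P}$ and Corollary~\ref{Corollaryabelianopz} applies verbatim to give that $\sigma$ is inner. I expect the main obstacle to be the bookkeeping in the very first reduction: one must check that after replacing $\sigma$ by an inner-modified $p'$-power, the property actually used downstream (class-preserving, plus fixing a Sylow $p$-subgroup, plus $p$-power order) genuinely survives, and that proving ``$\sigma_2$ inner'' suffices to prove ``$\sigma$ inner'' — this is the standard argument (as in the proof of Theorem~\ref{Corollaryp'groupserfelijk}) that $\sigma$ and $\sigma_2$ differ by an inner automorphism times a $p'$-power of $\sigma$ itself, so if $\sigma_2 \in \textup{Inn}(G)$ then some power $\sigma^t$ with $\gcd(t,p)=1$ is inner, and since $\sigma$ has $p$-power order this forces $\sigma \in \textup{Inn}(G)$. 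Everything else is a faithful transcription of Hai--Ge's argument with $2$ replaced by $p$.
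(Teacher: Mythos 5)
Your overall skeleton (normalize $\sigma$ so that it fixes a Sylow $p$-subgroup $\bar P \supseteq P$ of $G$ elementwise, control $\sigma$ on $N$, then invoke Corollary~\ref{Corollaryabelianopz} with the abelian quotient $G/N$) is the right one, and the bookkeeping you worry about at the end is indeed the standard reduction. The gap is in the middle: the assertion that $\sigma|_H$ is a class-preserving automorphism of $H = O_{p'}(N)$ is false. Class-preservation in $G$ only gives $\sigma(h) = g^{-1}hg$ with $g \in G$, not $g \in H$, and Theorem~\ref{p'groupserfelijk}(1) --- the paper's only tool for pushing class-preservation down to a normal subgroup --- requires $p \nmid |G/H|$, which fails whenever $p$ divides $|G|$. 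Consequently your intermediate conclusion $\sigma|_N = \textup{id}_N$ is simply wrong: take $G = C_7 \rtimes C_3$ with faithful action, $N = C_7$, $p = 3$, and $\sigma = \textup{conj}(c)$ for $c$ a generator of the complement. This $\sigma$ is class-preserving, Coleman, of order $3$, and fixes the Sylow $3$-subgroup $\left< c \right>$ elementwise (so your normalization leaves it untouched), yet it acts nontrivially on $H = N = C_7$. Relatedly, your claimed ``exact location'' of the abelian hypothesis is vacuous: $\sigma|_P = \textup{id}_P$ already follows from $P \subseteq \bar P$ with no abelianness needed, so your argument never actually uses that $P$ is abelian --- a warning sign, since your stated goal was to isolate where it enters.

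What the proof actually has to do --- and what the adaptation of Hai--Ge's argument that the paper alludes to does --- is glue the local data: $\sigma$ is conjugation by some $g_q \in G$ on each Sylow $q$-subgroup $O_q(N)$ and the identity on $\bar P$, and one must produce a \emph{single} $g$ with $\sigma|_{N\bar P} = \textup{conj}(g)|_{N\bar P}$ before Corollary~\ref{Corollaryabelianopz} can fire. The paper's own Theorem~\ref{Theoremoutcol} shows that this gluing genuinely fails for Coleman automorphisms that are not class-preserving: the outer Coleman automorphisms $\Phi_{k_1 \ldots k_{n-1}0}$ of $A \rtimes C_{p^r}$ fix the Sylow $p$-subgroup $\left< x \right>$ elementwise but act as conjugation by \emph{different} powers of $x$ on the different Sylow subgroups of $A$. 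So the class-preserving hypothesis (applied to elements having nontrivial components in several Sylow subgroups of $N$, to force those powers to coincide) and the abelianness of the Sylow $p$-subgroup of $N$ must both be brought to bear precisely at this gluing step, which your proposal skips entirely.
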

The previous result can be extended to nilpotent-by-nilpotent groups. We show that the general theorem for nilpotent-by-nilpotent groups can be reduced to the general theorem for nilpotent-by-cyclic groups.
\begin{Theorem} Let $G$ be a nilpotent-by-nilpotent group. Let $N$ be a nilpotent normal subgroup of $G$ such that $G/N$ is nilpotent. If the Sylow $p$-subgroup of $N$ is abelian and the Sylow $p$-subgroup of $G/N$ is cyclic, then $G$ has no non-inner class-preserving Coleman automorphisms of order a power of $p$. \end{Theorem}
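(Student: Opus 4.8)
The plan is to reduce everything to the nilpotent-by-cyclic case treated in Theorem \ref{generalnilpotentbycyclic} by quotienting $G/N$ onto its Sylow $p$-subgroup. Since $G/N$ is nilpotent, it is the direct product of its Sylow subgroups; let $\overline{P}$ be its Sylow $p$-subgroup, which is cyclic by hypothesis, and let $M\trianglelefteq G$ be the preimage of $\overline{P}$ under the projection $G\to G/N$. Then $N\le M\trianglelefteq G$, the quotient $M/N\cong\overline{P}$ is cyclic of $p$-power order, and $G/M\cong (G/N)/\overline{P}$ is a $p'$-group; in particular $p\nmid |G/M|$.

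Now let $\sigma$ be a class-preserving Coleman automorphism of $G$ of $p$-power order. First I would descend to $M$: as $M$ is normal and $p\nmid|G/M|$, Theorem \ref{p'groupserfelijk}(1) shows that $\sigma$ induces a class-preserving Coleman automorphism $\sigma|_M$ of $M$, again of $p$-power order. Since $M$ is nilpotent-by-cyclic ($N$ is nilpotent and normal in $M$ with $M/N$ cyclic) and the Sylow $p$-subgroup of $N$ is abelian by hypothesis, Theorem \ref{generalnilpotentbycyclic} applies and gives $\sigma|_M=\textup{conj}(g)|_M$ for some $g\in M$. As in the earlier proofs of this paper, I would then replace $\sigma$ by a suitable $p'$-power of $\sigma\circ\textup{conj}(g^{-1})$: this differs from the original $\sigma$ only by an inner automorphism and a $p'$-power, so since the image of $\sigma$ in $\textup{Out}(G)$ has $p$-power order it suffices to prove the new automorphism is inner. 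Renaming it $\sigma$, I may thus assume $\sigma|_M=\textup{id}_M$.

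Finally I would invoke Lemma \ref{lemmaopzn} with the normal subgroup $M$. Two things remain to be verified. First, $\sigma$ induces the identity on $G/M$: being class-preserving, $\sigma$ induces a class-preserving automorphism of $G/M$ of $p$-power order, but $G/M$ is a $p'$-group, so by Theorem \ref{orderclasspreserving} this induced automorphism is trivial. Second, $\sigma$ fixes a Sylow $p$-subgroup of $G$ elementwise: since $p\nmid|G/M|$, every Sylow $p$-subgroup of $G$ is contained in $M$, where $\sigma$ acts trivially. Lemma \ref{lemmaopzn} then yields that $\sigma$ is conjugation by an element of $O_p(Z(M))$, hence inner, and the proof is complete.

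I do not expect a serious obstacle here; the argument is a clean reduction rather than a computation. The one point requiring care is the direction of the reduction: one must pass to the quotient onto the Sylow $p$-subgroup of $G/N$, so that $p\nmid|G/M|$ and Theorem \ref{p'groupserfelijk} is available and $M$ lands in the nilpotent-by-cyclic setting of Theorem \ref{generalnilpotentbycyclic}; quotienting out $O_{p'}(G/N)$ instead would leave a $p$-group quotient and Theorem \ref{p'groupserfelijk} would not apply. The rest — the $p'$-power normalization and the two hypothesis checks for Lemma \ref{lemmaopzn} — is routine and repeats patterns already used repeatedly above.
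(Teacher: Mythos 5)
Your argument is correct and follows essentially the same route as the paper: both pass to the normal subgroup $M$ that is the preimage in $G$ of the Sylow $p$-subgroup of $G/N$, note that $p\nmid|G/M|$, and invoke Theorem \ref{generalnilpotentbycyclic} on the nilpotent-by-cyclic group $M$. The only difference is that the paper compresses your final steps (restriction via Theorem \ref{p'groupserfelijk}, the $p'$-power normalization, and the appeal to Lemma \ref{lemmaopzn}) into a single citation of Theorem \ref{Corollaryp'groupserfelijk}, whose proof is exactly the argument you spelled out.
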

\begin{proof}
Let $G$ be as in the statement of the Theorem. Hence, there exists a $p$-element $x \in G$ such that $N\left< x \right>/N$ is the Sylow $p$-subgroup of $G/N$. Put $M = N \left<x \right>$. Clearly, $M$ is a normal subgroup of $G$ and $p \nmid |G/M|$. So, by Theorem \ref{Corollaryp'groupserfelijk}, the theorem is shown if $\textup{Out}_{col}(M) \cap \textup{Out}_{c}(M)$ is a $p'$-group. As $M$ satisfies all conditions in Theorem \ref{generalnilpotentbycyclic}, this clearly holds.
\end{proof}
\bibliographystyle{plain}
{\footnotesize \bibliography{normaliserbib2}}
\end{document}